\documentclass{article}
\usepackage{fullpage}
\usepackage[utf8]{inputenc}
\usepackage[french,english]{babel}
\usepackage{amsmath,amsfonts,pifont,amssymb,latexsym,amsthm}
\usepackage{bbm} 
\usepackage{hyperref}
\usepackage{graphicx}
\usepackage{tikz}
\usetikzlibrary{positioning,shadows.blur}
\usepackage{comment}

\newcommand\Z{\mathbbm Z}
\newcommand\N{\mathbbm N}

\newcommand\R{\mathbbm R}

\newcommand\E{\mathcal E}
\newcommand\F{\mathcal F}
\newcommand\FF{\mathfrak F}
\newcommand\BF{\mathcal B}
\newcommand\B{B}
\newcommand\supp[1]{\mathcal S_{#1}}
\newcommand\AN[2][]{\mathcal D_{#1}({#2})}
\newcommand\FN[2][]{\mathfrak d_{#1}({#2})}

\newcommand{\restr}[1]{_{\left|#1\right.}}
\newcommand{\compl}[1]{{#1}^C}
\usepackage{stmaryrd} 
\newcommand{\cc}[2]{\left\llbracket #1,#2\right\rrbracket}
\newcommand{\co}[2]{\left\llbracket #1,#2\right\llbracket}
\newcommand{\ic}[1]{\left\rrbracket-\infty,#1\right\rrbracket}
\newcommand{\ci}[1]{\left\llbracket #1,+\infty\right\llbracket}
\newcommand{\oin}[1]{\mathcal I(#1)}

\newcommand{\soit}[1]{\left|\everymath{\displaystyle\everymath{}}\begin{array}{ll}#1\end{array}\right.}
\newcommand{\both}[1]{\left\{\begin{array}{l}#1\end{array}\right.}

\newcommand\Min{\texttt{Min}}

\newcommand{\dinf}[1]{\vphantom{#1}^\infty{#1}^\infty}

\newcommand{\cl}[1]{\overline{#1}}

\newcommand{\sett}[2]{\left\{\left.#1\vphantom{#2}\right|#2\right\}}
\newcommand{\set}[3]{\sett{#1\in#2}{#3}}
\newcommand{\card}[1]{\left|#1\right|}
\newcommand{\len}[1]{\left|#1\right|}
\newcommand{\abs}[1]{\left|#1\right|}
\newcommand{\ipart}[1]{\left\lfloor#1\right\rfloor}
\newcommand{\orb}{\mathcal O}

\newcommand\asym[2][]{\mathcal A_{#1}(#2)}
\newcommand\lang{\mathcal L}
\newcommand\pprec{\prec\!\!\prec}

\newtheorem{thm}{Theorem}[section]
\newtheorem{prop}[thm]{Proposition}
\newtheorem{defi}[thm]{Definition}
\newtheorem{exmp}[thm]{Example}
\newtheorem{rmq}[thm]{Remark}
\newtheorem{crl}[thm]{Corollary}
\newtheorem{lem}[thm]{Lemma}
\newcommand{\et}{\textrm{ and }}
\newcommand{\oub}{\textrm{ or }}
\newcommand{\ou}{\textrm{ where }}
\newcommand{\si}{\textrm{ if }}
\newcommand{\sinon}{\textrm{ otherwise}}
\newcommand\dfn[1]{\textbf{#1}}
\newcommand{\resp}[1]{\ (resp. #1)}
\newcommand\rpm[1]{\mathbf{r_{#1}}}
\newcommand\pierre{\color{blue}}
\newcommand\saliha{\color{red}}

\usepackage{authblk}
\title{The generic limit set of cellular automata}
\author[$\star$]{Saliha Djenaoui}
\author[$\dagger$]{Pierre Guillon}
\affil[$\dagger\star$]{Universit\'e d'Aix-Marseille CNRS, Centrale Marseille, I2M, UMR 7373,\\ 13453 Marseille, France}
\affil[$\star$]{Département de Mathématiques, Universit\'e Badji Mokhtar-Annaba, B.P. 12, Sidi Amar, 23220 Annaba, Algérie\\
	\texttt{sdjenaoui1@yahoo.fr}}
\affil[$\dagger$]{Interdisciplinary Scientific Center J.-V. Poncelet (ISCP), Independent University of Moscow, CNRS, UMI 2615, Russia\\
	\texttt{pierre.guillon@math.cnrs.fr}}

\begin{document}
	
	\maketitle
	\begin{abstract}
		In topological dynamics, the generic limit set is the smallest closed subset which has a comeager realm of attraction. We study some of its topological properties, and the links with equicontinuity and sensitivity. 
		We emphasize the case of cellular automata, for which the generic limit set is included in all subshift attractors, and discuss directional dynamics, 
		as well as the link with measure-theoretical similar notions.
	\end{abstract}
	
	\textbf{Keywords :} cellular automata, basin of attraction, limit set, attractor, directional dynamics, Baire category, symbolic dynamics.
	
	\section{Introduction}
	In a topological dynamical system (DS), the limit set is the set of points that appear arbitrarily late during the evolution (see \cite{Culik}). 
	But it may include points which look transient, because they do not appear arbitrarily late around any orbit. 
	
	J.~Milnor, interested in the dynamics on the space of measures, introduced in \cite{milnor} the notion of likely limit set, that provides a useful tool for studying asymptotic behavior for almost all orbits. 
	He also implicitly defined a topological version of the same intuitive idea, that he calls the \emph{generic limit set}.
	The goal of our article is to formalize this concept. In other words, we focus on the asymptotic behavior for almost all orbits in the sense of Baire category theory. We study some topological properties of the generic limit set, which is the smallest closed set that has a comeager realm. 
	
	We show that the generic limit set is actually equal to the limit set if the DS is semi-nonwandering (Proposition~\ref{p:snwand}), a broad property that is implied by nonwanderingness, or equicontinuous (Proposition~\ref{p:equi}). We also prove that the generic limit set is the closure of the asymptotic set of the equicontinuity points if the DS is almost equicontinuous (Proposition \ref{p:eqan}). 
	
	For cellular automata (CA), we know that all subshift attractors have a dense open realm of attraction (see \cite{hurley, formenti, Petr, Mike}). We prove that the generic limit set is a subshift (Proposition~\ref{p:subsh}), which is included in all subshift attractors (Corollary~\ref{c:subattr}). 
	
	We emphasize directional dynamics of cellular automata, which is devoted to their qualitative behaviour (equicontinuity, sensitivity, expansiveness) when composed with shifts (see \cite{sablik,Martin}). 
	First, in oblique directions (the shifts are bigger than the radius), a weak semi-mixingness property makes the generic limit set equal to the limit set. 
	If the generic limit set is finite, it is shown that it consists of only one periodic orbit of a monochrome configuration (Proposition~\ref{p:periodicorbit}) and the DS is almost equicontinuous (Proposition~\ref{p:sensinfini}). We show that it is the case if the cellular automaton is almost equicontinuous in two directions of opposite sign; moreover the realm of this periodic orbit then contains a dense open set (Proposition~\ref{p:alldir}). We mention a nontrivial example where the period is nontrivial (Example \ref{e:martin}).
	We give a classification of generic limit sets in the context of directional dynamics (Theorem \ref{t:class}).
	
	We formulate most topological-dynamical results in a very general framework of sequences of continuous functions, which correspond to nonuniform dynamical systems. The purpose is double: to be able to apply our results to directional dynamics of cellular automata in a smoother way than previous works (which often had to introduce several ad-hoc definitions), and hopefully to propose a large setting in which different kinds of attractor properties can be studied, that could be useful in other subcases.
	
	The paper is structured as follows : In Section~\ref{s:preliminaries}, we provide the basic background on the subject of topological dynamical systems and cellular automata. In Section~\ref{s:attractor}, we show some preliminary results about attractors, limit sets, realms. In Section~\ref{s:genericsystem}, we define the generic limit set and prove the main results about it. In Section~\ref{s:directional}, 
	we show some consequences on directional dynamics of cellular automata, and provide a list of examples. 
	In Section~\ref{s:measure}, we compare the generic limit set with the likely limit set. 
	
	\section{Preliminaries}\label{s:preliminaries}
	\subsection{Topology}
	In this article, $(X,d)$ is a compact metric space. We put $\B_{\delta}(x)=\set yX{d(x,y)<\delta}$ and call it the \dfn{open ball} with center $x\in X$ and radius $\delta>0$. 
	{For $U,V\subseteq X, we note d(U,V)=\inf\sett{d(x,y)}{x \in U, y \in V}$}. We may write $d(U,y)=d(U,\{y\})$ for $y\in Y$.
	We also write $\B_\delta(U)=\set yX{d(U,y)<\delta}$ and $\cl\B_\delta(U)$ its closure.
	
	A subset $U\subseteq X$ is called \dfn{comeager} in $X$ if it includes a countable intersection of dense open sets. A subset $U$ is \dfn{meager} if $X\backslash U$ is comeager in $X$. By the Baire category theorem, the intersection of countably many dense open sets in $X$ is dense in $X$. Hence, a comeager set is dense in $X$.
	
	We also say that a set $U\subseteq X$ is \dfn{comeager in} some set $V\subseteq X$ if $U\cap V$ is comeager in the induced topological space $V$.
	
	A subset $A$ of $X$ is said to have the \dfn{Baire} property if there is an open set $U$ such that the symmetric difference $A\Delta U=(A\backslash U)\cup (U\backslash A)$ is meager in $X$. {The family of sets with the Baire property forms a $\sigma$-algebra.}
	Every Borel subset 
	has the Baire property (for more details, see for instance \cite{Srivastava}).
	
	We recall the following folklore remark, further used several times.
	\begin{rmq}\label{r:Borel}
		A set $W\subseteq X$ with the Baire property is not comeager if and only if there exists a nonempty open set $U$ in which $W\cap U$ is meager.
	\end{rmq}
	\begin{proof}
		$\compl W\Delta U$ is meager for some open set $U$, and $W\cap U\subseteq\compl W\Delta U$.
		Just remark that $W$ is comeager if and only if $U=\emptyset$. 
	\end{proof}
	\subsection{Topological dynamics}
	Now we introduce some key concepts of the topological dynamics.
	A (time-nonuniform) \dfn{dynamical system} (DS) is any sequence $\FF=(F_t)_{t\in\N}$ of continuous self-maps of {some compact metric space }$X$.
	This general formalism will be useful when studying directional dynamics of cellular automata, but the reader should keep in mind the following specific, more classical, case.
	$\FF$ is \dfn{uniform} if $F_t=G^t$, for all $t\in\N$ and some continuous self-map $G$ of $X$.
	We may then write the uniform DS simply as $G$.
	
	We are interested in 
	the \dfn{orbits} $\orb_\FF(x)=\sett{F_t(x)}{t\in\N}$ of points $x\in X$.
	We say that a set $U$ is $\FF$-invariant\resp{strongly} if $F_{t+1}(U)\subseteq F_t(U)$\resp{$F_t^{-1}(U)=U=F_t(U)$}, for every $t\in\N$.
	
	\paragraph{Equicontinuity.}
	For $\varepsilon>0$, a point $x\in X$ is \dfn{$\varepsilon$-stable} if there exists $\delta>0$ such that $\forall y\in\B_\delta(x),\forall t\in\N,d(F_t(x),F_t(y))<\varepsilon$.
	The set $\E_\FF \subseteq X$ of \dfn{equicontinuous} points for $\FF$ is the set of points which are $\varepsilon$-stable for every $\varepsilon>0$. 
	If $\E_\FF$ is comeager, then we say that $\FF$ is \dfn{almost equicontinuous}.
	If $\E_\FF = X$, then we say that $\FF$ is \dfn{equicontinuous}.
	Equivalently by compactness, for every $\varepsilon>0$, there is a uniform $\delta>0$ such that $\forall x\in X,\forall y\in\B_\delta(x),\forall t\in\N,d(F_t(x),F_t(y))<\varepsilon$.
	
	$\FF$ is \dfn{sensitive} if there exists $\varepsilon > 0$ such that $\forall x \in X, \forall \delta > 0, \exists y \in \B_{\delta}(x), \exists t\in\N, d(F_t(x),F_t(y)) \geq \varepsilon$.
	This implies that $\E_\FF=\emptyset$.
	
	\begin{rmq}\label{r:eqstab}
		$\E_\FF$ is comeager in some nonempty open set $U\subseteq X$ if and only if, for every $\varepsilon>0$, the set of $\varepsilon$-stable points is.
	\end{rmq}
	\begin{proof}
		$\E_\FF$ can be written as the decreasing countable intersection of the sets of $1/n$-stable points.
	\end{proof}
	
	A class which is between nonuniform and uniform DS is the following: $\FF$ is \dfn{semi-uniform} if $F_t=G_{t}\cdots G_1$, for all $t\in\N$ and some equicontinuous sequence $(G_t)_{t\ge1}$ of self-maps of $X$.
	This is trivially satisfied when $\sett{G_t}{t\ge1}$ is finite; in particular, in the case of uniform DS: $G_t=G_1$ for every $t\ge1$.
	We will sometimes denote $G_{T+\cc1t}$ the composition $G_{T+t}\cdots G_{T+2}G_{T+1}$, so that $F_{t+T}=G_{T+\cc1t}F_T$.
	
	\paragraph{Transitivity.}
	Classical notions from topological dynamics can be adapted in our framework of nonuniform DS.
	Here is an example, that will actually be used especially for a uniform DS, the shift map from the next section, but we give the nonuniform version for completeness, and for the user to get used to the little differences that hold in this setting, in the perspective of better understanding related notions, like the ones introduced in Subsection~\ref{s:oblique}.
	A DS $\FF$ is said to be \dfn{transitive}\resp{\dfn{weakly mixing}} if for any nonempty open subsets $U,V$\resp{and $U',V'$} of $X$ and for every $T\in\N$, there exists $t\ge T$ such that $F_t^{-1}(U)\cap V\ne\emptyset$\resp{and $F_t^{-1}(U')\cap V'\ne\emptyset$}.
	One can prove that, if the space is perfect, then it is enough to suppose this for $T=0$.
	For any uniform DS, it is classical that it is enough to suppose this for $T=1$ (and for $T=0$ if the uniform DS is surjective).
	
	The following lemma will be used in the context of shift maps: it can be interpreted as the fact that a transitive DS \emph{mixes} the space, in the sense that it transforms a \emph{local} topological property into a global one.
	\begin{lem}\label{l:comeager}
		Let $\FF=(F_t)_{t\in\N}$ be a transitive DS, where all $F_t$ are homeomorphisms, and let $W\subseteq X$ be a strongly $\FF$-invariant subset.
		\begin{enumerate}
			\item\label{i:densinu} $W$ is either dense or nowhere dense.
			\item\label{i:densopen} $W$ either has empty interior or includes a dense open set.
			\item\label{i:meagerco} If $W$ has the Baire property, then it is either meager or comeager.
			\item\label{i:coseq} If $\bigcup_{i\in\N}W_i$ has nonempty interior\resp{is not meager}, where each $W_i$ is strongly $\FF$-invariant \resp{and has the Baire property}, then there exists $i\in\N$ such that $W_i$ includes a dense open set\resp{is comeager}.
		\end{enumerate}
	\end{lem}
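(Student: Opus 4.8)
The plan is to prove the four items in order, each building on the previous one, with transitivity used to "spread" a local witness across the whole space via the homeomorphisms $F_t$.

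\textit{Item \ref{i:densinu}.} Suppose $W$ is not nowhere dense, so its closure $\cl W$ has nonempty interior; pick a nonempty open set $U\subseteq\cl W$. I want to show $\cl W=X$, equivalently that $\cl W$ meets every nonempty open $V$. By transitivity (applied with $T=0$, or absorbing the surjectivity subtlety, since the $F_t$ are homeomorphisms), there is $t$ with $F_t^{-1}(U)\cap V\ne\emptyset$; equivalently $U\cap F_t(V)\ne\emptyset$. Since $F_t$ is a homeomorphism, $F_t(V)$ is open, so $U\cap F_t(V)$ is a nonempty open subset of $\cl W$, hence meets $W$: there is $w\in W$ with $w\in F_t(V)$. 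Then $F_t^{-1}(w)\in V$, and strong invariance ($F_t^{-1}(W)=W$) gives $F_t^{-1}(w)\in W$, so $W\cap V\ne\emptyset$. Thus $W$ is dense.

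\textit{Item \ref{i:densopen}.} Suppose $W$ has nonempty interior $\ouv W\ne\emptyset$. First note $\ouv W$ is itself strongly $\FF$-invariant: each $F_t$ being a homeomorphism, $F_t(\ouv W)$ and $F_t^{-1}(\ouv W)$ are open sets equal (as sets) to $W$, hence contained in $\ouv W$, and the reverse inclusions follow symmetrically. Apply item \ref{i:densinu} to $\ouv W$: it is not nowhere dense (it is a nonempty open set), so it is dense. Then $\ouv W$ is a dense open set contained in $W$.

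\textit{Item \ref{i:meagerco}.} Assume $W$ has the Baire property and is not meager. By Remark~\ref{r:Borel} applied to $\compl W$... more directly: since $W$ has the Baire property, there is an open $U$ with $W\Delta U$ meager; as $W$ is not meager, $U\ne\emptyset$. The set $W\cap U$ is comeager in $U$ and differs from $W$ restricted to $U$ only on a meager set. Now I use transitivity to transport comeagerness of $W$ inside $U$ to comeagerness of $W$ inside any basic open set. Fix a nonempty open $V$; by transitivity there is a homeomorphism $F_t$ with a nonempty open $O\subseteq V$ such that $F_t(O)\subseteq U$ (shrink $F_t^{-1}(U)\cap V$). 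Since $W\cap U$ is comeager in $U$, its preimage $F_t^{-1}(W\cap U)=F_t^{-1}(W)\cap F_t^{-1}(U)=W\cap F_t^{-1}(U)$ (using strong invariance) is comeager in $F_t^{-1}(U)\supseteq O$, so $W$ is comeager in $O$, hence not meager in $V$. As $V$ was arbitrary and $W$ has the Baire property, Remark~\ref{r:Borel} forces $W$ to be comeager.

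\textit{Item \ref{i:coseq}.} For the interior version: if $\bigcup_i W_i$ has nonempty interior, then by the Baire category theorem (a nonempty open set is not meager in itself, being a Baire space) some $W_i$ is not nowhere dense in that open set, hence not nowhere dense in $X$; by item \ref{i:densopen} this $W_i$ includes a dense open set. For the parenthetical version: if $\bigcup_i W_i$ is not meager and each $W_i$ has the Baire property, then a countable union of meager sets is meager, so some $W_i$ is not meager; by item \ref{i:meagerco} it is comeager.

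\textit{Main obstacle.} The routine-but-delicate point is the bookkeeping around the definition of transitivity in this nonuniform setting: transitivity as stated gives a $t\ge T$ with $F_t^{-1}(U)\cap V\ne\emptyset$ for \emph{arbitrary} $T$, which suffices to produce the homeomorphism $F_t$ we need, but one must be careful that "$F_t^{-1}(U)\cap V\ne\emptyset$" together with $F_t$ being a homeomorphism genuinely yields a nonempty \emph{open} set on which to pull back the local property — this is where $F_t$ being a homeomorphism (not merely continuous) is essential, both for openness of images and for $F_t^{-1}(W)=W$ to interact correctly with meagerness. The rest is standard Baire-category manipulation.
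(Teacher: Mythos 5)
Your items \ref{i:densinu}, \ref{i:densopen}, \ref{i:meagerco}, and the ``not meager / comeager'' half of item \ref{i:coseq} are correct and essentially follow the paper's own argument: in items \ref{i:densinu} and \ref{i:meagerco} you transport the local witness through a homeomorphism $F_t$ supplied by transitivity exactly as the paper does, and your item \ref{i:densopen} (via strong $\FF$-invariance of $\ouv W$, rather than the paper's orbit of an open subset of $W$) is a harmless variant. One verbal slip: ``$F_t(\ouv W)$ and $F_t^{-1}(\ouv W)$ are open sets equal (as sets) to $W$'' should read ``included in $W$''; the intended argument --- an open subset of $W$ lies in $\ouv W$ --- is clear and works.

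The genuine gap is in the interior half of item \ref{i:coseq}. Baire gives you some $W_i$ that is not nowhere dense; but item \ref{i:densopen} needs $W_i$ to have nonempty interior, whereas ``not nowhere dense'' only says $\cl{W_i}$ has nonempty interior, and item \ref{i:densinu} then yields only that $W_i$ is \emph{dense}, not that it includes a dense open set. So your final inference is unjustified --- and it cannot be repaired, because that half of the statement is false as written: take $\FF=(\sigma^t)_{t\in\N}$ on $\{0,1\}^\Z$ (transitive, all maps homeomorphisms), $W_0$ the set of configurations with finitely many occurrences of $1$ and $W_1$ its complement (both strongly $\sigma$-invariant, both with empty interior); their union is the whole space, yet neither includes a dense open set. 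The paper's own proof commits the parallel unjustified step (asserting that some $W_i$ must have nonempty interior), and what is actually needed downstream (Proposition~\ref{p:subsystems}) is only the weaker, true conclusion that some $W_i$ is dense --- which your Baire-plus-item-\ref{i:densinu} argument does establish.
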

	\begin{proof}
		~\begin{enumerate}
			\item Suppose $W$ is dense in some nonempty open set $U$. 
			Since $\FF$ is transitive, for every nonempty open set $V$, there exists $t$ such that $F_t^{-1}(V)\cap U$ is a nonempty open set. Moreover, $W$ is dense in $U$, so that $F_t^{-1}(V)\cap U\cap W\ne\emptyset$. Since $F_t(W)\subseteq W$, $V\cap F_t(U)\cap W\supseteq F_t(F_t^{-1}(V)\cap U\cap W)\ne\emptyset$.
			So, $W$ is dense in $X$.
			\item Suppose that $W$ includes a nonempty open set and is strongly $\FF$-invariant; then $W$ includes the (nonempty open) orbit of this open set. From Point~\ref{i:densinu}, this orbit is a dense open set.
			\item Suppose that $W$ is not meager and consider any nonempty open subset $V\subseteq X$.
			By Remark~\ref{r:Borel}, there exists an open set $U$ such that $W$ is comeager in $U$.
			By transitivity, there exists $t\in\N$ such that $F_t^{-1}(U)\cap V$ is a nonempty open subset.
			By assumption, $W$ is not meager in its nonempty open (because $F_t$ is a homeomorphism) image $U\cap F_t(V)$, and since $F_t^{-1}$ is a homeomorphism, $F_t^{-1}(W)$ is not meager in $F_t^{-1}(U)\cap V$.
			By reverse invariance, we get that $W$ is not meager in $F_t^{-1}(U)\cap V$.
			By Remark~\ref{r:Borel}, since $W$ is not meager in any nonempty open set, it is comeager.
			\item One of the $W_i$ has to have nonempty interior\resp{to not be meager}. We conclude by the previous point.
			\popQED	\end{enumerate}
	\end{proof}
	
	\subsection{Symbolic dynamics}
	\paragraph{Configurations.}
	Let $A$ be a finite set called the \dfn{alphabet}. A word over $A$ is any finite sequence of elements of $A$. Denote $A^*=\bigcup_{n\in\N}A^n$ the set of all finite words $u=u_0\dots u_{n-1} ; \len u=n$ is the length of $u$.
	We say that $v$ is a \dfn{subword} of $u$ and write $v\sqsubset u$, if there are $k,l<\len u$ such that $v=u_{\co kl}$.
	$A^{\Z}$ is the \dfn{space of configurations}, equipped with the following metric:
	\[d(x,y):=2^{-n},\text{ where }n=\min\sett{i\in\N}{x_i\ne y_i\text{ or }x_{-i}\ne y_{-i}}.\]
	$A^{\Z}$ is a Cantor space.
	The \dfn{cylinder} of $u\in A^*$ in position $i$ is $[u]_i=\set x{A^{\Z}}{x_{\co i{i+|u|}}=u}$. Cylinders are clopen (closed and open). The (full) \dfn{shift} is the dynamical system $\sigma$ over space $A^\Z$ defined by $\sigma(x)_i = x_{i+1}$ for $i \in \Z$ and $x\in A^\Z$. It is $2$-Lipschitz. 
	
	The (spatially) \dfn{periodic} configuration $\dinf u$ is defined by $(\dinf u)_{k|u|+i}=u_i$ for $k\in\Z, 0\leq i<|u|$ and $u\in A^+$.
	A \dfn{monochrome} configuration is one with only one symbol: $\dinf0$, for some $0\in A$.
	
	\paragraph{Subshifts.}
	A \dfn{subshift} is any subsystem $\sigma\restr\Sigma$ of the full shift; we usually simply denote it by $\Sigma$, which is then simply a closed strongly $\sigma$-invariant subset of $A^{\Z}$.
	Equivalently, there exists a \dfn{forbidden language} $\F\subseteq A^*$ such that $\Sigma=\Sigma_{\F}=\set x{A^{\Z}}{\forall u\sqsubset x,u\notin \F}$.
	If $\F$ can be taken finite, then one says that $\Sigma_{\F}$ is a \dfn{subshift of finite type} (SFT); in that case $\F$ can be taken included in $A^k$ for some $k\in\N$, which is an \dfn{order} for the SFT. We write that $\Sigma_{\F}$ is a $k$-SFT.
	Let $\Sigma\subseteq A^{\Z}$ be a subshift. Then $\lang(\Sigma)=\set u{A^*}{\exists x\in\Sigma, u\sqsubset x}$ is the \dfn{language} of $\Sigma$.
	
	\begin{rmq}\label{r:comeager}
		It is clear that $(A^\Z,\sigma)$ is transitive.
		Point~\ref{i:meagerco} of Lemma~\ref{l:comeager} hence applies, and gives that nonfull subshifts are all nowhere dense (hence meager).
	\end{rmq}
	
	We shall use the following lemma to show results concerning the dynamics of cellular automata.
	\begin{lem}\label{l:lipschmodSFT}
		Let $\varepsilon>0$ and $V\subseteq A^\Z$. ~\begin{enumerate}
			\item 
			$\sigma^j(\B_{\varepsilon}(V))\subseteq \B_{2^{j}\varepsilon}(\sigma^j(V))$ for all $j\in\Z$.
			\item \label{i:lipschmod} If $V$ is strongly $\sigma$-invariant, $x\in A^\Z$, and $p>0$ are such that for all $n\in\Z$, $\sigma^{pn}(x)\in \B_\varepsilon(V)$, then $\forall i\in \Z, \sigma^i(x)\in \B_{2^{p}\varepsilon}(V)$.
			\item \label{i:SFT}
			If $V$ is a $2k+1$-SFT, then $\bigcap_{i\in\Z}\sigma^i(\B_{2^{-k}}(V))=V$.
	\end{enumerate}\end{lem}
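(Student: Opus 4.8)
All three items are bookkeeping around two facts: the full shift is bi-Lipschitz, and a subshift of finite type is pinned down by the finite windows it allows. I would establish them in the stated order, the first feeding the second, the third being essentially self-contained.

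For item~1 the plan is to read off from the definition of $d$ that shifting (left or right) moves the smallest position of disagreement by at most one, so both $\sigma$ and $\sigma^{-1}$ are $2$-Lipschitz and hence $\sigma^j$ is $2^{\abs j}$-Lipschitz (for $j\ge0$, which is the only case needed afterwards, the constant is $2^j$). Then, given $y\in\B_\varepsilon(V)$, pick $v\in V$ with $d(v,y)<\varepsilon$ — possible because the defining infimum is $<\varepsilon$ — so that $\sigma^j(v)\in\sigma^j(V)$ and $d(\sigma^j(v),\sigma^j(y))<2^{\abs j}\varepsilon$, i.e. $\sigma^j(y)\in\B_{2^{\abs j}\varepsilon}(\sigma^j(V))$; ranging over $y$ gives the inclusion.

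For item~2, I would divide with remainder: $i=pn+r$ with $0\le r<p$ (valid for every $i\in\Z$, with $n$ possibly negative), so $\sigma^i(x)=\sigma^r\bigl(\sigma^{pn}(x)\bigr)$. Since $\sigma^{pn}(x)\in\B_\varepsilon(V)$ by hypothesis, applying item~1 together with the strong invariance $\sigma^r(V)=V$ gives $\sigma^i(x)\in\B_{2^r\varepsilon}(V)\subseteq\B_{2^p\varepsilon}(V)$, because $2^r\le2^{p-1}<2^p$. For item~3, the inclusion $V\subseteq\bigcap_{i\in\Z}\sigma^i(\B_{2^{-k}}(V))$ is immediate from $V\subseteq\B_{2^{-k}}(V)$ and strong invariance, which give $V=\sigma^i(V)\subseteq\sigma^i(\B_{2^{-k}}(V))$ for each $i$. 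For the reverse inclusion, write $V=\Sigma_\F$ with $\F\subseteq A^{2k+1}$ and fix $x$ in the intersection; it is enough to check that every length-$(2k+1)$ subword of $x$ lies in $\lang(V)$, hence outside $\F$. Given a window $\co\ell{\ell+2k+1}$ of $x$, choose $i$ so that $\sigma^{-i}$ lines this window up with the central window $\co{-k}{k+1}$ (concretely $i=-k-\ell$, minding the convention $\sigma(x)_m=x_{m+1}$). From $x\in\sigma^i(\B_{2^{-k}}(V))$ we get $\sigma^{-i}(x)\in\B_{2^{-k}}(V)$, so some $v\in V$ is at distance $<2^{-k}$ from $\sigma^{-i}(x)$ and hence agrees with it on all coordinates of absolute value $\le k$; therefore the chosen subword of $x$ equals $v_{\co{-k}{k+1}}\in\lang(V)$. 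As $\ell$ was arbitrary, $x\in\Sigma_\F=V$.

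I do not anticipate a real obstacle. The points that need care are: getting the index $i$ and the direction of the shift right in item~3; translating "distance $<2^{-k}$" into "agreement on the central $(2k+1)$-window"; and consistently using the \emph{strong} invariance of $V$ (not mere invariance), so that $\sigma^i(V)=V$ for every $i\in\Z$ in items~2 and~3.
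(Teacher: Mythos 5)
Your proof is correct and follows essentially the same route as the paper's: item~1 from the $2$-Lipschitz property of the shift, item~2 by Euclidean division combined with item~1 and strong $\sigma$-invariance, and item~3 by translating "$d(\cdot,V)<2^{-k}$" into agreement on the central $(2k+1)$-window and invoking the SFT order. Your extra care with the constant $2^{\abs j}$ for negative $j$ and the explicit easy inclusion in item~3 only make explicit what the paper leaves implicit.
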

	\begin{proof}~\begin{enumerate}
			\item 
			Let $x\in\sigma^j(\B_{\varepsilon}(V))$. Then $\sigma^{-j}(x)\in\B_{\varepsilon}(V)$, which means that $d(\sigma^{-j}(x),V)<\varepsilon$.
			Moreover, $d(x,\sigma^j(V))=d(\sigma^j\sigma^{-j}(x), \sigma^j(V))<2^jd(\sigma^{-j}(x), V)<2^j\varepsilon$. So, $x\in\B_{2^{j}\varepsilon}(\sigma^j(V))$. 
			\item
			From the previous point, $\forall i\in \Z, \sigma^i(x)=\sigma^{i\bmod p}\sigma^{\lfloor i/p\rfloor p}(x)\in \sigma^{i\bmod p}(\B_\varepsilon(V))\subseteq\B_{2^{i\bmod p}\varepsilon}(\sigma^{i\bmod p}(V))$. Since $V$ is strongly $\sigma$-invariant, we also have that $\B_{2^{i\bmod p}\varepsilon}(\sigma^{i\bmod p}(V))\subseteq\B_{2^{p}\varepsilon}(V)$. Hence, $\forall i\in \Z, \sigma^i(x)\in \B_{2^{p}\varepsilon}(V)$.
			\item Let $i\in\Z$.
			\begin{align*}
			\sigma^{-i}(x)\in \B_{2^{-k}}(V)
			&\iff d(\sigma^{-i}(x),V)<2^{-k}\\
			&\iff\exists y\in V,\sigma^{-i}(x)_{\cc{-k}k}=y_{\cc{-k}k}\\
			&\iff\exists y\in V,x_{\cc{-i-k}{-i+k}}=y_{\cc{-k}k}\\
			&\iff x_{\cc{-i-k}{-i+k}}\in\lang (V)~.
			\end{align*}
			If this is true for every $i\in\Z$ and $V$ is $(2k+1)$-SFT, then $x\in V$.
			\popQED\end{enumerate}\end{proof}
	
	\subsection{Cellular automata}
	A map $F : A^{\Z} \to A^{\Z}$ is a \dfn{cellular automaton} (CA) if there exist integers $r_-\leq r_+$ (\dfn{memory} and \dfn{anticipation}) and a local rule $f : A^{r_+-r_-+1} \to A$ such that for any $x \in A^{\Z}$ and any $i \in \Z, F(x)_i = f(x_{i+r_-},\dots,x_{i+r_+})$. $d=r_+-r_-+1\in\N$ is sometimes called the \dfn{diameter} of $F$.
	Sometimes we assume that $-r_-=r_+$, which is then called the \dfn{radius} of $F$ (it is always possible to obtain this, by taking $r=max\{|r_-|,|r_+|\}\in\N$). 
	By Curtis, Hedlund and Lyndon \cite{Hedlund}, a map $F : A^{\Z} \to A^{\Z}$ is a CA if and only if it is continuous and commutes with the shift.
	In particular, CA induce uniform DS over $A^\Z$.
	
	\paragraph{Directional dynamics.}
	We call \dfn{curve} a map $h:\N\to\Z$ with bounded variation, that is: $M_h=\sup_{t\in\N}\abs{h(t+1)-h(t)}$ is finite.
	The map is meant to give a position in space for each time step.
	Following \cite{Martin}, the \dfn{CA $F$ in direction $h$} will refer to the sequence $(F^t\sigma^{h(t)})_{t\in\N}$. We will use all notations for DS with $F,h$ instead of $\FF$, when dealing with it (for instance $\E_{F,h}$ is its set of equicontinuous points).
	
	In a first reading, the reader can understand the next definitions and results by considering the classical case: $h$ constantly $0$.
	In general, the directional dynamics of a CA can be read on its space-time diagram, by following $h$ as a curve when going in the time direction.
	An example of curve is given by the (possibly irrational) lines: $\alpha\in\R$ will stand for the direction $t\mapsto\ipart{t\alpha}$.
	The dynamics \emph{along $\alpha$} then corresponds to that studied in \cite{sablik, zana}.
	
	
	\paragraph{Equicontinuity.}
	A word $u\in A^*$ is (strongly) \dfn{blocking} for a CA $F$ along curve $h$ if there exists an offset $s\in\Z$ such that for every $x,y\in[u]_s$, $\forall t\in\N,F^t\sigma^{h(t)}(x)_{\co0{M}}=F^t\sigma^{h(t)}(y)_{\co0{M}}$, where $M=\max(-r_-+\max_t(h(t)-h(t+1)),r_++\max_t(h(t+1)-h(t)))$, and $r_-$ and $r_+$ are the (minimal) memory and anticipation for $F$.
	The terminology comes from the fact that in that case, $u$ is both left- and right-blocking (with the same offset), which is taken as a definition in \cite{Martin}:
	A word $u\in A^*$ is \dfn{right-blocking} for a CA $F$ in direction $h$ if there exists an offset $s\in\Z$ such that: 
	\[\forall x,y\in[u]_s,x_{\ic s}=y_{\ic s}\implies\forall t\in\N,F^t(x)_{\ic{h(t)}}=F^t(y)_{\ic{h(t)}}~.\]
	We define \dfn{left-blocking} words similarly. 
	
	The following proposition explains how equicontinuity in cellular automata can be rephrased in terms of blocking words.
	The vertical case dates back from \cite{Petr,kurka}, the linear directions from \cite{sablik}, the directions with bounded variations can be found in the proofs of \cite[Prop~2.1]{Martin}; a version with unbounded variations of the first point can even be found in \cite[Prop~3.1.3, Cor~3.1.4]{Martinthese}.
	\begin{prop}\label{p:wall}
		Let $F$ be a CA and $h$ a curve.
		\begin{enumerate}
			\item If there is a left- and right-blocking word $u$ for $F$ in direction $h$, then $\E_{F,h}$ includes the comeager set of configurations where $u$ appears infinitely many times on both sides.
			\item Otherwise, 
			$F$ is sensitive in direction $h$.
		\end{enumerate}
	\end{prop}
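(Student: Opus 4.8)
For the first point, I would show directly that $\E_{F,h}$ contains the set $\mathcal C$ of configurations in which $u$ occurs infinitely often both to the left and to the right; this set is comeager, being for every $n\in\N$ the intersection of the dense open sets $\bigcup_{i\ge n}[u]_i$ and $\bigcup_{i\le-n}[u]_i$. Fix $x\in\mathcal C$ and $\varepsilon>0$, choose $k$ with $2^{-k}\le\varepsilon$, and let $s_L$ and $s_R$ be offsets witnessing that $u$ is left-blocking and right-blocking, respectively, for $F$ in direction $h$. The point to exploit is that, as $F$ commutes with $\sigma$, an occurrence of $u$ at an input position $p$ acts in the $h$-shifted frame as a wall: for two configurations both carrying that occurrence and agreeing on $\ic p$, the right-blocking property makes their images under $F^t\sigma^{h(t)}$ agree on $\cc{-k}k$ for all $t$ as soon as $p\ge s_R+k$, and symmetrically an occurrence at some $p\le s_L-k$, with agreement on $\ci p$, yields a left wall. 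Since $x\in\mathcal C$, one may fix such occurrences at positions $p_R\ge s_R+k$ and $p_L\le s_L-k$, take $N$ so large that $[-N,N]$ contains $\cc{-k}k$ and both occurrences, and set $\delta=2^{-N}$.

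Given $y\in\B_\delta(x)$ --- so $y$ agrees with $x$ on $[-N,N]$ and hence differs from it only outside that block --- I would interpose the configuration $z$ that equals $x$ on $\ic{p_R}$ and equals $y$ on $\ci{p_R}$. Then $x$ and $z$ both carry the $p_R$-occurrence and agree on $\ic{p_R}$, so the right wall forces $F^t\sigma^{h(t)}(x)$ and $F^t\sigma^{h(t)}(z)$ to agree on $\cc{-k}k$ for every $t$; while $z$ and $y$ both carry the $p_L$-occurrence and agree on $\ci{p_L}$, so the left wall forces $F^t\sigma^{h(t)}(z)$ and $F^t\sigma^{h(t)}(y)$ to agree on $\cc{-k}k$ for every $t$. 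Composing, $d(F^t\sigma^{h(t)}(x),F^t\sigma^{h(t)}(y))<\varepsilon$ for all $t$, so $x$ is $\varepsilon$-stable; as $\varepsilon$ and $x\in\mathcal C$ were arbitrary, $\mathcal C\subseteq\E_{F,h}$. The substantive ingredient here is the telescoping through $z$: it is what lets two one-sided blocking properties, possibly carried with different offsets, combine into a shield against simultaneous two-sided perturbations, and the only genuine care needed is to keep track, in the moving frame, of which half-line agreement each wall consumes and which it produces.

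For the second point I would prove the contrapositive: if $F$ is not sensitive in direction $h$, there is a word that is both left- and right-blocking. Non-sensitivity yields, for $\varepsilon=2^{-M}$ with $M$ as in the definition of blocking, a configuration $x$ and an integer $m$ such that every $y$ agreeing with $x$ on $\cc{-m}m$ satisfies $F^t\sigma^{h(t)}(x)_{\co0{M}}=F^t\sigma^{h(t)}(y)_{\co0{M}}$ for all $t$. Then $w:=x_{\cc{-m}m}$ is strongly blocking for $F$ along $h$ with offset $-m$: any two configurations of $[w]_{-m}$ agree with $x$ on $\cc{-m}m$, hence have $F^t\sigma^{h(t)}$-images agreeing with that of $x$, and so with each other, on $\co0{M}$ for every $t$. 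Since a strongly blocking word is in particular left- and right-blocking (as recalled just before the definition), such a word exists, contradicting the hypothesis, and $F$ is sensitive in direction $h$. If one instead wants this second point self-contained, the real work is exactly that last implication, proved by induction along the space-time diagram: after translating so that $h(0)=0$, a window whose image is frozen for all time blocks every signal from crossing the frame that moves along $h$, and the --- otherwise opaque --- value of $M$ is tuned precisely so that this induction closes despite the bounded variation of $h$. That propagation step is the main obstacle; everything else is routine.
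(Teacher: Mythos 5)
Your argument is correct, and it is essentially the classical blocking-word argument that the paper itself does not reproduce (Proposition~\ref{p:wall} is only quoted there, with the proof delegated to the cited references of K\r{u}rka, Sablik and Delacourt). Your first point is exactly the standard two-wall telescoping through the interpolated configuration $z$, with the right bookkeeping of offsets ($p_R\ge s_R+k$, $p_L\le s_L-k$) in the moving frame, and your second point is the standard contrapositive: an $\varepsilon$-stable point for $\varepsilon=2^{-M}$ yields a strongly blocking word $x_{\cc{-m}m}$ with offset $-m$. The only dependency is the implication ``strongly blocking $\Rightarrow$ left- and right-blocking'', which the paper also asserts without proof as the justification of its terminology; you correctly identify that its content is a space-time induction whose closure is exactly what the constant $M$ is designed for. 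One small simplification there: no normalization $h(0)=0$ is needed, since the strong-blocking definition applied at $t=0$ already forces the window $\co{h(0)}{h(0)+M}$ to lie inside the occurrence of the word, which is precisely the base case of the induction on both sides.
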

	In particular, $\E_{F,h}$ is either empty or comeager.
	The question is open whether this remains true in the unbounded-variation case (see \cite[Rem~3.1.1]{Martinthese}).
	
	\begin{defi}[{\cite[Def~2.5]{Martin}}]
		Let us denote $\BF$ the set of curves (recall that they are maps $h:\N\to\Z$ with bounded variation).
		\\
		For $h,h'\in\BF$, we put $h\preceq h'$ if there exists $M>0$ such that $h(t)\leq h'(t)+M$ for all $t\in \N$.
		We put $h\prec h'$ if, besides $h'\not\preceq h$.
		$\preceq$ is a preorder relation on $\BF$, and we note $\sim$ the corresponding equivalence relation.
		\\
		We also note 
		$h\pprec h'$ if $\lim_{t\to\infty}h'(t)-h(t)=+\infty$.
		$\pprec$ is a transitive relation which is finer than $\prec$. 
		\\
		The preorder $\preceq$ induces a notion of (closed, open, semi-open) curve \dfn{interval}, with some bounds $h'\preceq h''$, noted $[h',h'']$, $]h',h''[$, $[h',h''[$, $]h',h'']$.
		We say that the interval is \dfn{nondegenerate} if $h'\prec h''$.
		For an interval $S\subseteq\BF$ with bounds $h'$ and $h''$, we also note $\oin{S}=\set h\BF{h'\pprec h\pprec h''}\subset]h',h''[$.
	\end{defi}
	A \dfn{direction} will implicitly refer to an equivalence class for $\sim$ (sometimes abusively confused with one representative).
	It is not so hard to get convinced that equicontinuity properties are preserved by $\sim$.
	\begin{rmq}\label{r:eqdir}
		Let $F$ be a CA over $A^\Z$ and $h,h'\in\BF$. If $h\sim h'$, then $\E_{F,h}=\E_{F,h'}$.
	\end{rmq}
	In particular, $F$ is almost equicontinuous\resp{equicontinuous} along $h$ if and only if $F$ is almost equicontinuous\resp{equicontinuous} along $h'$.
	\begin{proof}
		By assumption, there exists $M\in\N$ such that $-M+h(x)\leq h'(t)\leq h(t)+M, \forall t\in\N$.
		Let $l\in\N$ and $x\in A^\Z$.
		Let us show that if $x\in A^\Z$ is $2^{-M-l}$-stable along $h$, then it is $2^{-l}$-stable along $h'$.
		So, assume that there exists $k\in\N$ such that $\forall y\in A^\Z, x_{\cc{-k}{k}}=y_{\cc{-k}{k}}\implies\forall t\in\N, F^t\sigma^{h(t)}(x)_{\cc{-M-l}{M+l}}=F^t\sigma^{h(t)}(x)_{\cc{-M-l}{M+l}}~.$
		In other words,
		\[\forall y\in A^\Z, x_{\cc{-k}{k}}=y_{\cc{-k}{k}}\implies\forall t\in\N, F^t(x)_{\cc{-M-l+h(x)}{M+l+h(x)}}=F^t(x)_{\cc{-M-l+h(x)}{M+l+h(x)}}.\]
		By assumption, we get that $\cc{-l+h'(t)}{l+h'(t)}\subseteq\cc{-M-l+h(t)}{M+l+h(t)}, \forall t\in\N.$ Thus, \[\forall y\in A^\Z, x_{\cc{-k}{k}}=y_{\cc{-k}{k}}\implies\forall t\in\N, F^t\sigma^{h'(t)}(x)_{\cc{-l}{l}}=F^t\sigma^{h'(t)}(x)_{\cc{-l}{l}}~,\]
		which is exactly $2^{-l}$-stability of $x$.
		{\\
			Hence, if $x$ is an equicontinuity point along $h$, then $x$ is an equicontinuity point along $h'$. The converse is symmetric.}
	\end{proof}
	
	\section{Limit sets, asymptotic sets, realms}\label{s:attractor}
	We will define notions that deal with asymptotic behavior of a DS $\FF=(F_t)_t$.
	
	\subsection{Limit sets}\label{s:lim}
	The ($\Omega$-) \dfn{limit set} of $U\subseteq X$ is the set $\Omega_\FF(U)=\bigcap_{T\in\N}\cl{\bigcup_{t\ge T}F_t(U)}$, and the \dfn{asymptotic set} of $U\subseteq X$ is the set $\omega_\FF(U)=\bigcup_{x\in U}\Omega_\FF(\{x\})$.
	By compactness, these sets are nonempty (decreasing intersection of nonempty closed subsets).
	$\Omega_\FF(U)$ is compact, but $\omega_\FF(U)$ may not be, even for $U=X$ (see Example~\ref{e:lgliders}).
	Remark that $\Omega_\FF(U)\supseteq\bigcap_{t\in\N}F_t(U)$, and this is an equality if $U$ is a closed $\FF$-invariant set.
	
	We note $\Omega_\FF=\Omega_\FF(X)$ and $\omega_\FF=\omega_\FF(X)$.
	For more about the asymptotic set of dynamical systems, one can refer to \cite{ultfini}. Note that it was called \emph{accessible} set in \cite{Durand}, and \emph{ultimate} set in \cite{nilpeng}.
	
	The following remark easily follows from the compactness of $X$, and can be understood as the fact that every set which is at positive distance from $\Omega_\FF$ is transient, that is, ultimately does not appear.
	\begin{rmq}\label{r:omegastable}
		For every $U$, $\lim_{t\to\infty}d(F_t(U),\Omega_\FF(U))=0$. 
	\end{rmq}
	
	In the uniform case, it is clear that asymptotic sets are invariant.
	Here is a generalization of this fact.
	\begin{prop}\label{p:liminv}
		Let $\FF=(G_{\cc1t})_t$ be a semi-uniform DS,
		$U\subseteq X$, and $j\in\N$.
		\begin{enumerate}
			\item If $y\in\Omega_\FF(U)$, then $(G_{t+\cc1j}(y))_t$ admits a limit point in $\Omega_\FF(U)$.
			\item Conversely, if $z\in\Omega_\FF(U)$, then it is a limit point of $(G_{t+\cc1j}(y))_t$ 
			for some $y\in\Omega_\FF(U)$.
	\end{enumerate} \end{prop}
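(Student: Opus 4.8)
The plan is to leverage the defining feature of a semi-uniform DS: writing $\FF=(G_{\cc1t})_t$ with $(G_t)_{t\ge1}$ equicontinuous, the $j$-step transition maps $G_{t+\cc1j}$ form, for each fixed $j\in\N$, a family that is equicontinuous \emph{uniformly in} $t$. Indeed, on the compact space $X$, equicontinuity of $(G_t)_{t\ge1}$ is uniform, so there is a modulus $\delta(\cdot)$ with $d(a,b)<\delta(\varepsilon)\implies d(G_t(a),G_t(b))<\varepsilon$ for \emph{all} $t\ge1$; composing this modulus $j$ times (an immediate induction on $j$, the case $j=0$ being the identity) yields a single modulus valid for every $G_{t+\cc1j}$, independent of $t$. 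This uniformity is exactly what the hypothesis provides and the only genuinely necessary ingredient; the rest is compactness plus the identity $F_{t+j}=G_{t+\cc1j}F_t$.

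For the first item, given $y\in\Omega_\FF(U)$, pick $x_k\in U$ and $t_k\to\infty$ with $F_{t_k}(x_k)\to y$ (such data exist by definition of the limit set). Passing to a subsequence (relabelled, still with $t_k\to\infty$), assume $F_{t_k+j}(x_k)\to z$ for some $z\in X$; since $t_k+j\to\infty$ and $x_k\in U$, one gets $z\in\Omega_\FF(U)$. Now $F_{t_k+j}(x_k)=G_{t_k+\cc1j}(F_{t_k}(x_k))$, and since $d(F_{t_k}(x_k),y)\to0$, the uniform equicontinuity above gives $d\bigl(G_{t_k+\cc1j}(F_{t_k}(x_k)),G_{t_k+\cc1j}(y)\bigr)\to0$, hence $G_{t_k+\cc1j}(y)\to z$. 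As $t_k\to\infty$, this $z$ is a limit point of $(G_{t+\cc1j}(y))_t$, as wanted.

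For the converse, given $z\in\Omega_\FF(U)$, pick $x_k\in U$ and $s_k\to\infty$ with $F_{s_k}(x_k)\to z$; discarding finitely many indices, assume $s_k\ge j$ and put $t_k=s_k-j\to\infty$. This time extract a convergent subsequence of $(F_{t_k}(x_k))_k$, say $F_{t_k}(x_k)\to y$ (after relabelling); then $y\in\Omega_\FF(U)$ because $t_k\to\infty$ and $x_k\in U$. By uniform equicontinuity, $d\bigl(G_{t_k+\cc1j}(F_{t_k}(x_k)),G_{t_k+\cc1j}(y)\bigr)\to0$, while $G_{t_k+\cc1j}(F_{t_k}(x_k))=F_{s_k}(x_k)\to z$; hence $G_{t_k+\cc1j}(y)\to z$ and, since $t_k\to\infty$, $z$ is a limit point of $(G_{t+\cc1j}(y))_t$. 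The only point requiring a little care is the purely bookkeeping remark that a subsequential limit along indices $t_k\to\infty$ is indeed a limit point of the full sequence $t\mapsto G_{t+\cc1j}(y)$; the ``difficulty'', such as it is, is concentrated in recognising that the $t$-uniformity of the modulus of continuity — i.e.\ the semi-uniformity hypothesis — is precisely what prevents $G_{t_k+\cc1j}(F_{t_k}(x_k))$ and $G_{t_k+\cc1j}(y)$ from drifting apart.
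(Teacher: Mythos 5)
Your proof is correct and follows essentially the same route as the paper's: compactness to extract a limit point, the identity $F_{t+j}=G_{t+\cc1j}F_t$, and equicontinuity of the composed maps $(G_{t+\cc1j})_t$ to transfer convergence from $F_{t_k}(x_k)$ to $y$ (the paper merely swaps which of the two limit points is extracted first in item 1, which changes nothing of substance). You even spell out the small step the paper takes for granted, namely that the $j$-fold compositions inherit a $t$-uniform modulus of equicontinuity by composing the modulus $j$ times.
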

	Of course, the corresponding statement is also true for the $\omega$, which is defined as a union of limit sets.
	\begin{proof}~\begin{enumerate}
			\item 
			By assumption, there are increasing times $(t_k)_{k\in\N}$ and points $(x_k)_{k\in\N}\in U^\N$ such that $\lim_{k\to\infty}F_{t_k}(x_k)=y$.
			Let $\varepsilon>0$.
			By equicontinuity of the $(G_{t+\cc1j})_t$, there exists $\delta>0$ such that for all $z,z'$ with $d(z,z')<\delta$, we have $\forall t\in\N,d(G_{t+\cc1j}(z),G_{t+\cc1j}(z'))<\varepsilon/2$.
			Then there is $K\in\N$ such that for all $k\ge K$, $d(F_{t_k}(x_k),y)<\delta$, so that $d(F_{t_k+j}(x_k),G_{t_k+\cc1j}(y))<\varepsilon/2$.
			If $z$ is a limit point for $(G_{t_k+\cc1j}(y))_{k\in\N}$, one sees that there exist infinitely many $k$ such that 
			$d(F_{t_k+j}(x_k),z)<\varepsilon$, so that $z$ is also in $\Omega_\FF(\sett{x_k}{k\in\N})\subseteq\Omega_\FF(U)$.
			\item Now let $z\in\Omega_\FF(U)$, so that it is the limit point of $(F_{t_k}(x_k))_{k\in\N}$ for some $(x_k)_{k\in\N}\in U^\N$ and $(t_k)$ an increasing sequence, that we can assume to be greater than $j$.
			By compactness, $(F_{t_k-j}(x_k))_{k\in\N}$ admits a limit point $y\in\Omega_\FF(U)$.
			By triangular inequality, we have 
			{$d(G_{t_k-j+\cc1j}(y),z)\le d(G_{t_k-j+\cc1j}(y),F_{t_k}(x_k))+d(F_{t_k}(x_k),z)$}.
			When $k$ goes to $\infty$, the second term of the sum converges to $0$, and a subsequence of the first term converges to $0$, thanks to equicontinuity of 
			{$(G_{t_k-j+\cc1j})_{k\in\N}$}.
			\popQED\end{enumerate}\end{proof}
	The following corollary is useless for the purpose of the present paper, but may help the reader to connect with the known uniform case.
	\begin{crl}
		If $G$ is a uniform DS and $U\subseteq X$, then $G(\Omega_G(U))=\Omega_G(U)$.
	\end{crl}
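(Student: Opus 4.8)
The plan is to obtain this as an immediate specialization of Proposition~\ref{p:liminv} to the uniform case, with the composition index $j$ set to $1$. When $\FF$ is the uniform DS $G$, viewed as semi-uniform via $G_t=G$ for all $t\ge1$, one has $G_{t+\cc11}=G$ for every $t\in\N$, so the sequence $(G_{t+\cc11}(y))_t$ appearing in Proposition~\ref{p:liminv} is the \emph{constant} sequence with value $G(y)$; its unique limit point is therefore $G(y)$ itself. This is the only observation that makes the abstract statement collapse to the desired equality, and it is the ``obstacle'', such as it is — there is no real difficulty here, only the need to notice that limit points of a constant sequence are trivial.

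With that in hand, the first item of Proposition~\ref{p:liminv} (applied to $j=1$) says that for $y\in\Omega_G(U)$ the point $G(y)$, being the limit point in question, lies in $\Omega_G(U)$; hence $G(\Omega_G(U))\subseteq\Omega_G(U)$. The second item says that any $z\in\Omega_G(U)$ is a limit point of $(G_{t+\cc11}(y))_t$ for some $y\in\Omega_G(U)$, i.e.\ $z=G(y)$ with $y\in\Omega_G(U)$; hence $\Omega_G(U)\subseteq G(\Omega_G(U))$. Combining the two inclusions yields $G(\Omega_G(U))=\Omega_G(U)$.

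As an alternative (or sanity check), one could prove $G(\Omega_G(U))=\Omega_G(U)$ directly: the inclusion $G(\Omega_G(U))\subseteq\Omega_G(U)$ follows from continuity of $G$ and the description $\Omega_G(U)=\bigcap_T\cl{\bigcup_{t\ge T}G^t(U)}$ (a point $y=\lim_k G^{t_k}(x_k)$ maps under $G$ to $\lim_k G^{t_k+1}(x_k)\in\Omega_G(U)$), while the reverse inclusion uses compactness to extract, from $z=\lim_k G^{t_k}(x_k)$ with $t_k\ge1$, a limit point $y$ of $(G^{t_k-1}(x_k))_k$ in $\Omega_G(U)$ with $G(y)=z$. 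Either way, the argument is routine; invoking Proposition~\ref{p:liminv} is the cleanest route and is the one I would write up.
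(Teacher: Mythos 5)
Your proposal is correct and follows the same route as the paper: apply Proposition~\ref{p:liminv} with $j=1$ and $G_t=G$, noting that $G_{t+\cc11}=G$, so each point of the proposition yields one of the two inclusions. The additional direct argument is fine but not needed.
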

	\begin{proof}
		$G_{t+\cc11}=G_t=G$ for every $t\in\N$, so each point of Proposition~\ref{p:liminv} gives one inclusion.
	\end{proof}
	
	
	\subsection{Realms}
	
	The \dfn{realm} (of attraction) of $V$ is:
	\[\AN[\FF]V=\set xX{\omega_\FF(x)\subseteq V}~.\]
	The realm is sometimes called the basin of attraction; we prefer another name to recall that it is relevant even for sets $V$ which have no \emph{attractive} property.
	
	The \dfn{direct realm} of $V$ is the set $\FN[\FF]V=\bigcup_{T\in\N}\bigcap_{t\ge T}F^{-1}_t(V)$ of configurations whose orbits lie ultimately in $V$. 
	
	From the definition and some compactness arguments, the reader can be convinced of the following remarks.
	Note that the realm and the direct realm are related through opposite inclusions, depending on whether the set is open or closed.
	\begin{rmq}\label{r:union}
		Let $V\subseteq X$, and $V_i\subseteq X$ for any $i$ in some arbitrary set $I$.
		\begin{enumerate}
			\item\label{i:apprx} $\AN[\FF]V\subseteq\bigcap_{\varepsilon>0}\FN[\FF]{\B_\varepsilon(V)}$.
			\item\label{i:app} If $V$ is closed, then $\AN[\FF]V=\set xX{\lim_{t\to\infty}d(F_t(x),V)=0}=\bigcap_{\varepsilon>0}\FN[\FF]{\B_\varepsilon(V)}\supseteq\FN[\FF]V$.
			\item On the contrary, if $V$ is open, then $\AN[\FF]V\subseteq\FN[\FF]V$.
			\item\label{i:union} $\AN[\FF]{\bigcup_{i\in I} V_i}\supseteq\bigcup_{i\in I}\AN[\FF]{V_i}$.
			\item\label{i:inter} $\AN[\FF]{\bigcap_{i\in I} V_i}=\bigcap_{i\in I}\AN[\FF]{V_i}$. 
	\end{enumerate}\end{rmq}
	Conjugating the realm operator with complementation is also very relevant dynamically: as stated in the following remark.
	\begin{rmq}\label{r:compl}
		For every DS $\FF$ and subset $V$, the set of points whose orbits have a limit point in $V$ is $\compl{\AN[\FF]{\compl V}}=\set xX{\omega_\FF(x)\cap V\ne\emptyset}\supseteq\AN[\FF]V$, and the set of points whose orbits visit $V$ infinitely many times is $\compl{\FN[\FF]{\compl V}}=\bigcap_{T\in\N}\bigcup_{t\ge T}F_t^{-1}(V)\supseteq\FN[\FF]V$.
		\\
		Note that $\compl{\AN[\FF]{\compl V}}$ is nonempty if and only if $V$ intersects $\omega_\FF$.
		\\
		From Remark~\ref{r:union}, if $V$ is closed\resp{open}, then $\compl{\AN[\FF]{\compl V}}$ includes\resp{is included in} $\compl{\FN[\FF]{\compl V}}$.
	\end{rmq}
	
	\subsection{Related concepts}\label{ss:nwand}
	
	\paragraph{Nonwanderingness.}
	Let $\FF$ be a DS over space $X$.
	We say that $\FF$ is \dfn{nonwandering} if for every nonempty open set $U\subseteq X$, $\compl{\FN[\FF]{\compl U}}$ is not meager. 
	
	This definition does not give a specific role to time $0$, unlike, seemingly, the classical definition, for uniform DS. Nevertheless, they are equivalent in the uniform case, which helps understand the essence of that concept.
	\begin{rmq}
		Let $F$ be a uniform DS.
		Then $F$ is nonwandering if and only if, for every nonempty open set $V\subseteq X$, there exists $t\ge1$ such that $V\cap F^{-t}(V)\ne\emptyset$.
	\end{rmq}
	\begin{proof}~\begin{itemize}
			\item If $(F^t)$ is nonwandering, then the set $\compl{\FN[F]{\compl V}}$ of point whose orbits visit $V$ infinitely many times is in particular nonempty.
			Let $x$ be such a point, and $t_1<t_2$ be two time steps such that $y=F^{t_1}(x)$ and $F^{t_2-t_1}(y)=F^{t_2}(x)$ are both in $V$.
			It is then clear that {$y\in V\cap F^{t_1-t_2}(V)$.}
			\item Now suppose that for every nonempty open set $U\subseteq X$, there exists $t\ge1$ such that $U\cap F^{-t}(U)\ne\emptyset$.
			Let us show by induction on $n\in\N$ that there exist distinct time steps $0=t_0<t_1<\cdots<t_n$ at which the set $W_{(t_0,t_1,\cdots,t_n)}(U)=\bigcap_{i=0}^nF^{-t_i}(U)$ of points whose orbits visit $U$ is nonempty.
			It is trivial for $n=0$. Suppose that $W_{(0,t_1,\cdots,t_n)}(U)\ne\emptyset$. By assumption, we have that there exists $t\ge1$ such that $W_{(0,t_1,\cdots,t_n)}(U)\cap F^{-t}(W_{(0,t_1,\cdots,t_n)}(U))\ne\emptyset$.
			In particular, $W_{(0,t,t+t_1,t+t_2,\cdots,t+t_n)}(U)$ contains this intersection, so that it is nonempty.
			
			Now consider the set \[W_n(V)=\bigcup_{\begin{subarray}c(t_1,\cdots,t_n)\in\N^{n}\\0<t_1<\cdots<t_n\end{subarray}}W_{(0,t_1,\cdots,t_n)}(V)\] of points of $U$ whose orbits visit $U$ at least $n$ more times.
			Note that it is an open set, which is dense in $V$ because it includes the nonempty $W_n(U)\subseteq U$, for every open subset $U\subseteq V$.
			The set $\compl{\FN[F]{\compl V}}$ of points whose orbits visit $V$ infinitely many times can be written as the intersection $\bigcap_{n\in\N}W_n(V)$, and is thus comeager in $V$.
			\popQED\end{itemize}\end{proof}
	
	Moreover, let us mention, even if it will not be used later, that uniform DS are known to admit a nonempty largest nonwandering subsystem, containing, as a comeager set, the set of \dfn{recurrent} points, which are those points $x\in X$ such that $x\in\omega_F(x)$ (see for instance \cite{Vries}).
	Clearly, the set of recurrent points is a subset of the asymptotic set.
	
	\paragraph{Nilpotence.}
	We say that $\FF$ is \dfn{nilpotent} if there is a point $z\in X$ such that $\exists T\in \N, \forall x\in X,\forall t\ge T,F_t(x)=z$.
	$\FF$ is \dfn{asymptotically nilpotent} if $\omega_\FF$ is a singleton.
	
	It is known that CA are nilpotent if and only if their limit set is finite (see for instance \cite{Culik}).
	Also, it has been shown \cite{ultfini} that asymptotically nilpotent CA are actually nilpotent.
	In that case (see for instance \cite{Culik,ultfini}), 
	$z=\sigma(z)$, so that the CA is actually nilpotent in every direction.
	
	
	\paragraph{Asymptotic pairs.}
	Two points $x,y\in X$ are said to be \dfn{asymptotic} 
	to each other (or $(x,y)$ is an \dfn{asymptotic pair}) whenever $\lim_{t\to\infty} d(F_t(x),F_t(y))=0.~$ 
	The \dfn{asymptotic class} of $y$ is the set $\asym[\FF]{y}$ of points asymptotic to it. 
	Let us generalize the realm notations to every sequence $(V_t)_{t\in\N}$ of closed subsets of $X$, 
	by defining: $\AN[\FF]{(V_t)_t}=\set xX{\lim_{t\to\infty}d(F_t(x),V_t)=0}$ and $\FN[\FF]{(V_t)_t}=\set xX{\exists T\in\N,\forall t\ge T,F_t(x)\in V_t}$.
	We may also note $\AN[\FF]{(y_t)_t}$ and $\FN[\FF]{(y_t)_t}$ if $V_t$ is a singleton $\{y_t\}$.
	With this notation,  $\asym[\FF]{y}=\AN[\FF]{(\{F_t(y)\})_t}$.
	
	One can observe from the definition that $y\in\asym[\FF]y\subseteq 
	\AN[\FF]{\omega_\FF(y)}$.
	
	The following remark states that, in a finite space, asymptotic pairs correspond to ultimately equal orbits.	
	\begin{rmq}\label{r:fini}
		Let $G$ be a uniform DS over a finite space $X$, and $x,y\in X$.
		If $x$ and $y$ are asymptotic, 
		then $\exists t\in\N,G^t(x)=G^t(y)$.
		In particular, if $G$ is injective (or surjective), then $x=y$.
	\end{rmq}
	\begin{proof} 
		The first statement comes from $X$ being discrete.
		The second statement is clear because if $X$ is finite, then injectivity or surjectivity of $G$ are equivalent to bijectivity of any $G^t$.
	\end{proof}
	
	The following remark states that when an asymptotic class is big, then it should contain many equicontinuous points.
	\begin{rmq}\label{r:asymeq}
		If $\asym[\FF]{y}$ is comeager\resp{not meager} in some nonempty open subset $U\subseteq X$, for some $y\in X$, then $\E_\FF$ is comeager\resp{not meager} in $U$.
	\end{rmq}
	In particular, note that $\E_\FF\cap\asym[\FF]y$ is also comeager\resp{not meager} in $U$.
	\begin{proof}
		The assumption gives that for every $n\ge 1$, the union $\bigcup_{T\in\N}\bigcap_{t\ge T}F_t^{-1}(\cl\B_{1/n}(F_t(y)))$ of closed sets is comeager in $U$, as a superset of $\asym[\FF]y$.
		{Hence, $\bigcup_{T\in\N}\bigcap_{t\ge T}F_t^{-1}(\cl\B_{1/n}(F_t(y)))$ is not meager in any nonempty open subset $V\subseteq U$. This implies that there is $T\in\N$ such that $\bigcap_{t\ge T}F_t^{-1}(\cl\B_{1/n}(F_t(y)))$ is not meager in $V$; as a closed set, and by Remark~\ref{r:Borel}, it must then include a nonempty open subset $W\subseteq V$.}
		For every $x\in W$, by openness, there exists $\delta>0$ such that for every $z\in\B_\delta(x)$, $z\in W$, which implies that $\forall t\ge T,F_t(z)\in\cl\B_{1/n}(F_t(y))$. In particular, $F_t(x)\in\cl\B_{1/n}(F_t(y))$ and, by triangular inequality, we get $F_t(z)\in\cl\B_{2/n}(F_t(x))\subseteq\B_{3/n}(F_t(x))$.
		We deduce that $x$ is $3/n$-stable.
		In other words, the set of $3/n$-stable points includes nonempty open subsets of every nonempty open subset of $U$. This means that this set is comeager in $U$ for every $n\in\N$. 
		We conclude by Remark~\ref{r:eqstab}.
		\\
		The statement about non-meagerness can be obtained from the other one thanks to Remark~\ref{r:Borel}.
	\end{proof}
	
	\subsection{Decomposition of realms}

	The following proposition can be compared partly to \cite[Lem~3]{milnor}: if a set is decomposable into invariant components, then its realm can be decomposed accordingly.
	\begin{prop}\label{p:decom}
		Let $\FF=(G_{\cc1t})_{t\in\N}$ be a semi-uniform DS.
		Suppose $(V_i)_i$ is a finite collection of closed pairwise disjoint sets which are invariant by every $G_t$.
		Then $\AN[\FF]{\bigsqcup_i V_i}=\bigsqcup_i\AN[\FF]{V_i}$.
	\end{prop}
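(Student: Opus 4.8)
The inclusion $\bigsqcup_i\AN[\FF]{V_i}\subseteq\AN[\FF]{\bigsqcup_i V_i}$ is immediate from monotonicity of the realm operator (Remark~\ref{r:union}, point~\ref{i:union}), and the left-hand side is a disjoint union because the $V_i$ are pairwise disjoint (if $\omega_\FF(x)\subseteq V_i$ and $\omega_\FF(x)\subseteq V_j$ with $i\ne j$, then since $\omega_\FF(x)$ is nonempty — it contains a point of the nonempty $\Omega_\FF(\{x\})$ — we get $V_i\cap V_j\ne\emptyset$, a contradiction). So the whole content is the reverse inclusion: if $\omega_\FF(x)\subseteq\bigsqcup_i V_i$, then in fact $\omega_\FF(x)\subseteq V_i$ for a single $i$. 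Equivalently, one must show that the orbit-limit set $\Omega_\FF(\{x\})$ cannot meet two distinct components.

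The plan is to argue by contradiction: suppose $\Omega_\FF(\{x\})$ meets $V_i$ and $V_j$ with $i\ne j$. Since the $V_i$ are finitely many, pairwise disjoint, and closed (hence compact) in the compact metric space $X$, there is $\varepsilon>0$ with $d(V_i,V_j)>3\varepsilon$ for all $i\ne j$, and moreover the $\varepsilon$-neighborhoods $\B_\varepsilon(V_i)$ are pairwise disjoint. By Remark~\ref{r:omegastable} applied to $U=\{x\}$, we have $d(F_t(x),\Omega_\FF(\{x\}))\to0$; combined with $\Omega_\FF(\{x\})\subseteq\bigsqcup_i V_i$, this forces $F_t(x)\in\bigsqcup_i\B_\varepsilon(V_i)$ for all large $t$. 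The orbit thus lands, for large times, in a disjoint union of open sets; the key point is that it cannot jump between them. Here is where the semi-uniform structure enters: writing $F_{t+1}=G_{t+1}F_t$ with $(G_t)_{t\ge1}$ equicontinuous, pick $\delta\in(0,\varepsilon)$ such that $d(z,z')<\delta\implies d(G_t(z),G_t(z'))<\varepsilon$ for all $t$ (uniform $\delta$ by equicontinuity). Shrinking $\varepsilon$ at the outset so that the $\delta$-neighborhoods $\cl\B_\delta(V_i)$ are still pairwise disjoint and contained in $\B_\varepsilon(V_i)$, one checks: if $F_t(x)\in\B_\delta(V_i)$, then there is $z\in V_i$ with $d(F_t(x),z)<\delta$, hence $d(F_{t+1}(x),G_{t+1}(z))<\varepsilon$; but $G_{t+1}(z)\in V_i$ by invariance of $V_i$ under every $G_t$, so $F_{t+1}(x)\in\B_\varepsilon(V_i)$, and being in $\bigsqcup_k\B_\varepsilon(V_k)$ with the neighborhoods disjoint, it is in $\B_\varepsilon(V_i)$ only — in particular not in any $\B_\delta(V_j)$ for $j\ne i$ once $\varepsilon$ is small enough that $\B_\varepsilon(V_i)\cap\B_\delta(V_j)=\emptyset$. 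So once the orbit is deep inside one component's neighborhood it stays in that component's $\varepsilon$-neighborhood forever, whence $\Omega_\FF(\{x\})\subseteq V_i$, contradicting that it also meets $V_j$.

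To turn this into a clean write-up I would first fix the metric constants in the right order: choose $\varepsilon>0$ so that the closed sets $\cl\B_{2\varepsilon}(V_i)$ remain pairwise disjoint (possible by compactness and finiteness); then choose the equicontinuity modulus $\delta\le\varepsilon$ for the family $(G_t)_t$; this guarantees the "no jump" step above with room to spare. Then use Remark~\ref{r:omegastable} to get a time $T$ past which $F_t(x)$ is within $\delta$ of $\Omega_\FF(\{x\})\subseteq\bigsqcup_i V_i$, hence within $\B_\delta(V_{i(t)})$ for a well-defined index $i(t)$ (well-defined by disjointness); the invariance-plus-equicontinuity argument shows $i(t)$ is eventually constant, say equal to $i$, and that $F_t(x)\in\B_\varepsilon(V_i)$ for all $t\ge T$, so every limit point of the orbit lies in $\cl\B_\varepsilon(V_i)$, which meets no other $V_j$. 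Hence $\omega_\FF(x)=\Omega_\FF(\{x\})\subseteq\cl\B_\varepsilon(V_i)\cap\bigsqcup_k V_k=V_i$.

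The main obstacle is purely in the bookkeeping of the "no jump between components" step: it genuinely uses that the $V_i$ are invariant under each \emph{single} map $G_t$ (not merely under the $F_t$), together with the \emph{uniform} equicontinuity of the family $(G_t)_{t\ge1}$ — which is exactly what the semi-uniform hypothesis provides and what a general time-nonuniform DS would not. Everything else (nonemptiness of $\Omega_\FF(\{x\})$, the disjointness of the decomposition of the realm, monotonicity) is routine and already packaged in the earlier remarks.
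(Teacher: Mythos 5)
Your argument is correct and follows essentially the same route as the paper's proof: the easy inclusion via Remark~\ref{r:union}, then separating the $V_i$ by a distance $\varepsilon$, taking a uniform equicontinuity modulus $\delta$ for $(G_t)$, getting the orbit eventually $\delta$-close to $\bigsqcup_i V_i$, and using invariance of each $V_i$ under every $G_t$ in an inductive ``no jump'' step to pin the tail of the orbit near a single $V_i$. The only (cosmetic) differences are that you invoke Remark~\ref{r:omegastable} instead of Point~\ref{i:app} of Remark~\ref{r:union} and conclude directly from the definition of the realm rather than via $d(F_t(x),V_i)\to0$.
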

	\begin{proof}
		Since the $V_i$ are closed and pairwise disjoint, there are at positive pairwise distance. Let $\varepsilon=\min_{i\ne j}d(V_i,V_j)/2>0$.
		By equicontinuity of $(G_t)$, there exists $\delta>0$ such that $\forall t\in\N,\forall x,y\in X,d(x,y)<\delta\implies d(G_t(x),G_t(y))<\varepsilon$.
		Let $x\in\AN[\FF]{\bigsqcup_iV_i}$, so that there exists $T\in\N$ such that $\forall t\ge T,d(F_t(x),\bigsqcup_iV_i)<\min(\delta,\varepsilon)$.
		In particular, there exists $i$ such that $d(F_T(x),V_{i})<\min(\delta,\varepsilon)$.
		Let us show by induction on $t\ge T$ that $d(F_t(x),V_i)<\min(\delta,\varepsilon)$.
		Since $G_{t+1}(V_i)\subseteq V_i$, we have $d(F_{t+1}(x),V_{i})\le d(F_{t+1}(x),G_{t+1}(V_{i}))$.
		They are less than $\varepsilon$ by equicontinuity of $(G_t)$, using the recurrence hypothesis.
		By definition of $\varepsilon$, we have $\min_{j\ne i}d(F_{t+1}(x),V_j)\ge\min_{j\ne i}(d(V_j,V_i)-d(F_{t+1}(x),V_i))\ge\varepsilon$.
		So $\min(\delta,\varepsilon)\ge d(F_{t+1}(x),\bigsqcup_j V_j)=\min_j(d(F_{t+1}(x),V_j)\ge\min(\varepsilon,d(F_{t+1}(x),V_i))$.
		It results that $d(F_{t+1}(x),V_i)\le\min(\delta,\varepsilon)$, as wanted.
		\\
		Since for every $t\ge T$ and $j\ne i$, $d(F_t(x),V_j)\ge\varepsilon$, we deduce $d(F_t(x),V_{i})=\min_jd(F_t(x),V_j)=d(F_t(x),\bigsqcup_jV_j)$ converges to $0$.
		\\
		The other inclusion comes from Point~\ref{i:union} of Remark~\ref{r:union}.
	\end{proof}
	
	\paragraph{Realms of finite sets.}
	Proposition~\ref{p:fini} shows that the realm of a finite set contains a finite number of asymptotic classes.
	We shall use this 
	to show Proposition~\ref{p:sensinfini}.
	It uses the following lemma.
	\begin{lem}\label{l:fini}
		Let $\FF=(G_{\cc1t})_{t\in\N}$ be a semi-uniform DS over space $X$ and $V\subseteq X$ {be finite}.
		Then there exists $\delta>0$ such that for all $x,x'\in\AN[\FF]V$ and $T\in\N$ with $d(F_T(x),F_T(x'))\le\delta$, and $\forall t\ge T,d(F_t(x),V)\le\delta$ and $d(F_t(x'),V)\le\delta$, $(x,x')$ is an asymptotic pair.
	\end{lem}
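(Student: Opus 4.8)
The plan is to exploit the finiteness of $V$ together with the equicontinuity of the cocycle $(G_t)_{t\ge1}$ to produce a single threshold $\delta$ that works uniformly. Since $V$ is finite and $X$ is metric, the points of $V$ are at some positive pairwise distance; call $\eta=\min_{v\ne v'\in V}d(v,v')$ (if $\card V\le1$ set $\eta=+\infty$, and the statement is almost trivial). I would then pick $\delta>0$ small enough to satisfy two requirements at once: first, $3\delta<\eta$, so that the closed $\delta$-ball around $V$ splits into disjoint clopen pieces $\cl\B_\delta(v)$, one per $v\in V$; second, $\delta$ witnesses equicontinuity of $(G_t)_t$ at a scale that contracts below any prescribed $\varepsilon$, which I'll use to run an $\varepsilon$-induction. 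Concretely, by equicontinuity of $(G_t)$, for every $\varepsilon>0$ there is $\delta_\varepsilon>0$ with $\forall t,\forall y,y'\ (d(y,y')<\delta_\varepsilon\implies d(G_t(y),G_t(y'))<\varepsilon)$; I would fix $\delta=\delta_{\varepsilon_0}$ for $\varepsilon_0=\min(\eta/3,\ \delta_{\text{something}})$, iterating the equicontinuity modulus once so that $\delta$-closeness is preserved \emph{and} improvable.

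**Main argument.**
Let $x,x'\in\AN[\FF]V$ and $T$ be as in the hypothesis: $d(F_T(x),F_T(x'))\le\delta$, and $d(F_t(x),V)\le\delta$, $d(F_t(x'),V)\le\delta$ for all $t\ge T$. The first step is to show that $x$ and $x'$ ``track the same point of $V$'' from time $T$ on. Since $d(F_T(x),V)\le\delta$ and $d(F_T(x'),V)\le\delta$ and $d(F_T(x),F_T(x'))\le\delta$, and since $3\delta<\eta$, the triangle inequality forces $F_T(x)$ and $F_T(x')$ to be within $\delta$ of the \emph{same} $v\in V$. Now I would run an induction on $t\ge T$: suppose $d(F_t(x),v_t)\le\delta$ and $d(F_t(x'),v_t)\le\delta$ for a common $v_t\in V$, with moreover $d(F_t(x),F_t(x'))$ small. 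Applying $G_{t+1}$ and using invariance $G_{t+1}(v_t)\in V$, equicontinuity gives $d(F_{t+1}(x),G_{t+1}(v_t))<\varepsilon_0$ and likewise for $x'$; since $d(F_{t+1}(x),V)\le\delta$ and the $V$-points are $\eta$-separated with $\varepsilon_0+\delta<\eta$, the nearest point of $V$ to $F_{t+1}(x)$ must be $G_{t+1}(v_t)$ itself — and the same for $x'$ — so I may take $v_{t+1}=G_{t+1}(v_t)$ as the common tracked point. This shows $x,x'$ follow the common orbit $(v_t)_t=(G_{\cc1t}(v))_t$ inside $V$, staying within $\varepsilon_0$ of it past time $T$.

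**Closing the contraction.**
It remains to upgrade ``within $\varepsilon_0$ of the common $V$-orbit'' to ``asymptotic pair'', i.e. $d(F_t(x),F_t(x'))\to0$. For this I would not fix a single $\varepsilon_0$ but organize the choice of $\delta$ so the induction self-improves: choose $\delta$ with the property that $d(y,y')\le\delta$ and $y,y'$ both within $\delta$ of a common $v\in V$ implies, after applying $G_{t+1}$, that the images are within $\delta/2$ of the common point $G_{t+1}(v)\in V$ (possible by taking the equicontinuity modulus for $\varepsilon=\delta/2$, provided $\delta/2+\delta<\eta$, which we arranged). Then by induction $d(F_{T+n}(x),v_{T+n})\le\delta/2^n$ and $d(F_{T+n}(x'),v_{T+n})\le\delta/2^n$, hence $d(F_{T+n}(x),F_{T+n}(x'))\le\delta/2^{n-1}\to0$, so $(x,x')$ is asymptotic. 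The only subtlety — and the main obstacle — is making the two demands on $\delta$ compatible: $\delta$ must be the equicontinuity modulus for the \emph{smaller} scale $\delta/2$, which is circular as stated. I would resolve this by instead running the halving argument with a \emph{fixed} target scale: pick any $\varepsilon^*<\eta/3$, let $\delta^*$ be its equicontinuity modulus, WLOG $\delta^*\le\varepsilon^*$; set $\delta=\delta^*$. Once we know all iterates stay within $\delta^*\le\varepsilon^*$ of the common $V$-orbit and within $\delta^*$ of each other, re-apply equicontinuity with smaller and smaller targets $\varepsilon^*/2,\varepsilon^*/4,\dots$ — at each stage the hypothesis ``$d(\cdot,\cdot)<\delta^*$ near a common $V$-point'' is already secured, so we may invoke the modulus $\delta_{\varepsilon^*/2^k}$ and get $d(F_t(x),F_t(x'))<\varepsilon^*/2^k$ for all sufficiently large $t$ — giving convergence to $0$. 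This decouples the choice of $\delta$ (made once) from the asymptotic contraction (a separate limiting argument), which is exactly the structure the paper uses elsewhere (e.g. in Proposition~\ref{p:decom}).
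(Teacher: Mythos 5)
Your first step (the two orbits track the same element of $V$ from time $T$ on) is essentially the right idea and matches the paper's proof, but as written it leans on an assumption the lemma does not make: you set $v_{t+1}=G_{t+1}(v_t)$ and justify $v_{t+1}\in V$ by ``invariance'', yet $V$ is only assumed to be a finite subset of $X$ — no $G_t$-invariance is available (and none holds in the application to Proposition~\ref{p:fini}). This part is repairable: define $v_t$ as a point of $V$ nearest to $F_t(x)$ (and $v'_t$ for $x'$), and show $v_t=v'_t$ by induction, using the triangle inequality through $d(F_t(x),F_t(x'))\le2\delta$ and one application of the equicontinuity modulus, exactly as the paper does; no invariance is needed for that.

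The genuine gap is the ``closing the contraction'' step. Equicontinuity of the family $(G_t)$ is not a contraction property: knowing that $F_t(x)$ and $F_t(x')$ stay within $\delta^*$ of each other near a common point of $V$, one more application of $G_{t+1}$ only returns the target $\varepsilon^*$ attached to the modulus $\delta^*$; it never yields a bound below the one you already have. To invoke the finer modulus $\delta_{\varepsilon^*/2^k}$ you would need the two orbits to come within $\delta_{\varepsilon^*/2^k}$ of each other at some time, which is exactly what is to be proved — the circularity you yourself flagged is not removed by your fix. Indeed your closing argument makes no use of the hypothesis $x,x'\in\AN[\FF]V$, and without it the conclusion is false: take every $G_t=\id$ (an equicontinuous sequence), $V=\{v\}$, and two distinct points $x\ne x'$ within $\delta$ of $v$ and of each other; all the quantitative hypotheses hold, yet $d(F_t(x),F_t(x'))$ is constant and nonzero. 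The realm hypothesis is what carries the convergence, and it makes the last step trivial rather than delicate: since $V$ is finite, hence closed, $x,x'\in\AN[\FF]V$ gives $d(F_t(x),V)\to0$ and $d(F_t(x'),V)\to0$ (Remark~\ref{r:union}); combined with $v_t=v'_t$ for $t\ge T$, the triangle inequality yields $d(F_t(x),F_t(x'))\le d(F_t(x),V)+d(F_t(x'),V)\to0$, with no contraction argument at all. This is how the paper concludes, and your proof needs this substitution to be correct.
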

	\begin{prop}\label{p:fini}
		Let $\FF=(G_{\cc1t})_{t\in\N}$ be a semi-uniform DS over space $X$ and $V\subseteq X$ be finite. 
		Then there are at most $\card V$ asymptotic classes in $\AN[\FF]V$.
	\end{prop}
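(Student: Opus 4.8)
The plan is to derive the bound from Lemma~\ref{l:fini} by a pigeonhole argument over the (finitely many) points of $V$. If $V=\emptyset$ the statement is vacuous, since $\AN[\FF]V=\emptyset$ (every $\omega_\FF(x)$ is nonempty); so assume $V\neq\emptyset$.

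First I would fix $\delta>0$ as provided by Lemma~\ref{l:fini}, and shrink it if necessary so that, in addition, $\delta\le d(v,w)$ for all distinct $v,w\in V$ (this constraint is vacuous when $\card V\le1$, and making $\delta$ smaller is harmless, as it only strengthens the hypotheses of Lemma~\ref{l:fini}, so its conclusion still holds). For each $x\in\AN[\FF]V$, point~\ref{i:app} of Remark~\ref{r:union} gives $d(F_t(x),V)\to0$ (recall $V$ is closed, being finite), so there is a time $T_x$ with $d(F_t(x),V)<\delta/2$ for every $t\ge T_x$; by the choice of $\delta$, for each such $t$ there is then a \emph{unique} point $v(x,t)\in V$ with $d(F_t(x),v(x,t))<\delta/2$.

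The core observation I would establish next is: if $x,x'\in\AN[\FF]V$ and $v(x,T)=v(x',T)$ for \emph{some single} time $T\ge\max(T_x,T_{x'})$, then $(x,x')$ is an asymptotic pair. Indeed, the triangular inequality gives $d(F_T(x),F_T(x'))<\delta$, while $d(F_t(x),V)<\delta/2\le\delta$ and $d(F_t(x'),V)<\delta/2\le\delta$ for all $t\ge T$, so Lemma~\ref{l:fini} applies with this $T$ and yields the claim. From here the proof finishes by contradiction: were there $\card V+1$ pairwise non-asymptotic points $x_0,\dots,x_{\card V}$ in $\AN[\FF]V$, then, putting $T^\star=\max_{0\le i\le\card V}T_{x_i}$, the $\card V+1$ elements $v(x_0,T^\star),\dots,v(x_{\card V},T^\star)$ of $V$ could not be pairwise distinct, so $v(x_i,T^\star)=v(x_j,T^\star)$ for some $i\neq j$, whence $(x_i,x_j)$ is asymptotic by the observation — a contradiction. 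Thus $\AN[\FF]V$ consists of at most $\card V$ asymptotic classes; I would also note in passing that these are genuine (full) asymptotic classes, since any point asymptotic to an element of $\AN[\FF]V$ again lies in $\AN[\FF]V$.

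I do not anticipate a real obstacle, as the substantive work is packaged in Lemma~\ref{l:fini}. The only points needing a little attention are the simultaneous role of $\delta$ (small enough both to feed into Lemma~\ref{l:fini} and to separate the points of $V$, which forces the factor $1/2$ in the radius defining $v(x,t)$), and the remark that the \emph{labels} $v(x_i,\cdot)$ of the finitely many candidate representatives can all be read off at one common time $T^\star$ — this is precisely what turns \emph{finitely many values in $V$} into a pigeonhole.
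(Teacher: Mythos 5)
Your proof is correct and follows essentially the same route as the paper: choose $\delta$ from Lemma~\ref{l:fini}, take a common time after which all orbits are within $\delta/2$ of $V$, pigeonhole on the nearby points of $V$, and conclude by the triangle inequality and Lemma~\ref{l:fini}. The extra shrinking of $\delta$ to make the nearest point of $V$ unique is harmless but unnecessary, and your contradiction framing is just a cosmetic variant of the paper's direct pigeonhole on $\card V+1$ points.
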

	\begin{proof}
		For $0\le i\le\card V$, let $x_i\in\AN[\FF]V$, and $\delta$ be as in Lemma~\ref{l:fini}.
		There exists $T\in\N$ such that for all $i$, $\forall t\ge T,d(F_t(x_i),V)<\delta/2$, and in particular, $\exists y_i\in V,d(F_T(x_i),y_i)<\delta/2$.
		By the pigeon-hole principle, there are distinct $i,j$ such that $y_i=y_j$, so that $d(F_T(x_i),F_T(x_j))<\delta$ by the triangular inequality.
		By Lemma~\ref{l:fini}, we then know that $(x_i,x_j)$ is an asymptotic pair.
		Hence we can partition $\AN[\FF]V$ into at most $\card V$ asymptotic classes.
	\end{proof}
	\begin{proof}[Proof of Lemma~\ref{l:fini}]
		Let $\varepsilon=\frac13\min\sett{d(y,y')}{y,y'\in V,y\ne y'}>0$.
		By equicontinuity of $(G_t)$, there exists $\delta>0$ such that $\forall t\in\N,\forall x,x'\in X,d(x,x')\le\delta\implies d(G_t(x),G_t(x'))<\varepsilon$.
		Without loss of generality, we can assume $\delta\le\varepsilon$.
		Let $x,x'$ be as in the statement of the lemma, and for $t\in\N$, let $y(t)\in V$ be such that $d(F_t(x),y(t))=d(F_t(x),V)$, and $y'(t)$ be defined similarly.
		Let us show by induction on $t\ge T$ that $y(t)=y'(t)$, which by definition of $\varepsilon$, is equivalent to $d(y(t),y'(t))<3\varepsilon$.
		First, by the triangular inequality, $d(y(T),y'(T))\le d(y(T),F_T(x))+d(F_T(x),F_T(x'))+d(F_T(x'),y'(T))<3\delta\le3\varepsilon$.
		\\
		Now suppose this is true for $t\ge T$, and let us prove it for $t+1$.
		By the triangular inequality, we also have $d(y(t+1),y'(t+1))\le d(y(t+1),F_{t+1}(x))+d(G_{t+1}F_t(x),G_{t+1}F_t(x'))+d(F_{t+1}(x'),y'(t+1))$.
		The first and third terms are at most $\delta$ by hypothesis, while the central one is at most $\varepsilon$ by definition of $\delta$.
		All in all, we get that $y(t+1)=y'(t+1)$.
		We can conclude with, once again, the triangular inequality: $d(F_t(x),F_t(x'))\le d(F_t(x),y(t))+d(y(t),y'(t))+d(y'(t),F_t(x'))$.
		If $t\ge T$, this is $d(F_t(x),V)+0+d(V,F_t(x'))\to_{t\to\infty}0$.
	\end{proof}
	
	\subsection{Realms for cellular automata}
	The following proposition is very important to show Proposition~\ref{p:alldir}: transitivity of the shift brings some properties to realms and direct realms of shift-invariant sets through CA.
	\begin{prop}\label{p:intnvdense}Let $\FF=(F_t)_{t\in\N}$ be a sequence of CA over $X=A^\Z$, and $V\subseteq A^\Z$.
		\begin{enumerate}
			\item\label{i:shiftinv} $\omega_{F,h}(\sigma(V))=\sigma(\omega_{F,h}(V))$ and $\AN[\FF]{\sigma(V)}=\sigma(\AN[\FF]V)$.
			\item\label{i:intnvdense} If $V$ is strongly $\sigma$-invariant, then $\AN[\FF]V$ either has empty interior or includes a dense open set; it is either nowhere dense or dense. If, moreover, $V$ is closed, then $\AN[\FF]V$ is either comeager or meager.
			\item\label{i:densdom} If $V$ is a $2k+1$-SFT and $\AN[\FF]V$ has nonempty interior, then $\FN[\FF]V$ is dense.
			\item If $V$ is a subshift, then $\FN[\FF]V$ is meager, unless $F^{-1}_T(V)$ is full, for some $T\in\N$.
	\end{enumerate}\end{prop}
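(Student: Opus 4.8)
The plan is to use throughout the single structural fact that each $F_t$, being a cellular automaton, commutes with $\sigma$ (hence with every $\sigma^i$), while $\sigma$ is a homeomorphism of $A^\Z$. For the first item I would argue straight from the definitions: since the homeomorphism $\sigma$ commutes with closures and with arbitrary intersections and $F_t\sigma=\sigma F_t$, one gets $\Omega_\FF(\sigma(V))=\bigcap_T\cl{\bigcup_{t\ge T}F_t(\sigma(V))}=\sigma(\Omega_\FF(V))$ at once; specializing to singletons and taking the union over $V$ gives $\omega_\FF(\sigma(V))=\sigma(\omega_\FF(V))$, and in particular $\omega_\FF(\sigma^{-1}(x))=\sigma^{-1}(\omega_\FF(x))$ for each $x$. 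The realm identity then follows from $x\in\AN[\FF]{\sigma(V)}\iff\omega_\FF(x)\subseteq\sigma(V)\iff\omega_\FF(\sigma^{-1}(x))\subseteq V\iff\sigma^{-1}(x)\in\AN[\FF]V$.

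For the second item, observe that a strongly $\sigma$-invariant $V$ satisfies $\sigma(V)=V=\sigma^{-1}(V)$, so the first item makes $\AN[\FF]V$ strongly $\sigma$-invariant; since $(A^\Z,\sigma)$ is transitive (Remark~\ref{r:comeager}) and $\sigma$ is a homeomorphism, Points~\ref{i:densinu} and~\ref{i:densopen} of Lemma~\ref{l:comeager} give the nowhere-dense/dense and the empty-interior/dense-open dichotomies. When moreover $V$ is closed, Remark~\ref{r:union}(\ref{i:app}) lets me write $\AN[\FF]V=\bigcap_{n\ge1}\bigcup_{T\in\N}\bigcap_{t\ge T}F_t^{-1}(\B_{1/n}(V))$, which is Borel, hence has the Baire property, so Point~\ref{i:meagerco} of Lemma~\ref{l:comeager} yields the meager/comeager dichotomy.

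The third item is the heart of the matter, and the step I expect to be the main obstacle. Fix an arbitrary nonempty cylinder $C$; the goal is to produce a configuration of $\FN[\FF]V$ inside $C$. A $2k+1$-SFT is closed and strongly $\sigma$-invariant, and by hypothesis $\AN[\FF]V$ has nonempty interior, so the second item provides a dense open $D\subseteq\AN[\FF]V$; then $D\cap C$ is a nonempty open set, which therefore contains a $\sigma^p$-periodic configuration $x$ for some $p\ge1$ (periodic configurations are dense). Since $x\in\AN[\FF]V$ with $V$ closed, $d(F_t(x),V)\to0$ by Remark~\ref{r:union}(\ref{i:app}), so there is $T$ with $F_t(x)\in\B_{2^{-k-p}}(V)$ for all $t\ge T$. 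As $F_t$ commutes with $\sigma$, $F_t(x)$ is still $\sigma^p$-periodic, hence $\sigma^{pn}(F_t(x))=F_t(x)\in\B_{2^{-k-p}}(V)$ for every $n\in\Z$; Point~\ref{i:lipschmod} of Lemma~\ref{l:lipschmodSFT} (with $\varepsilon=2^{-k-p}$ and period $p$) then gives $\sigma^i(F_t(x))\in\B_{2^{-k}}(V)$ for all $i\in\Z$, and Point~\ref{i:SFT} of the same lemma concludes $F_t(x)\in V$. Thus $x\in\FN[\FF]V\cap C$, and $\FN[\FF]V$ is dense. The only delicate point is precisely this uniformization, over all positions $i\in\Z$, of the purely local information ``$d(F_t(x),V)<2^{-k-p}$''; the periodic-configuration trick is what makes it go through for free.

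For the last item, put $W_T=\bigcap_{t\ge T}F^{-1}_t(V)$ for $T\in\N$. Since $V$ is a subshift, each $F_t^{-1}(V)$ is closed and strongly $\sigma$-invariant (again because $F_t$ commutes with $\sigma$), hence so is $W_T$; by Point~\ref{i:meagerco} of Lemma~\ref{l:comeager} each $W_T$ is meager or comeager, and by Point~\ref{i:coseq}, if $\FN[\FF]V=\bigcup_{T\in\N}W_T$ is not meager then some $W_T$ is comeager. Being comeager and closed, that $W_T$ is dense and closed, hence equals $A^\Z$, so $F_t^{-1}(V)=A^\Z$ for every $t\ge T$; in particular $F_T^{-1}(V)$ is full, which is the stated alternative.
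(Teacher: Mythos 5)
Your proof is correct and follows essentially the same route as the paper: item~1 by commutation of the $F_t$ with the homeomorphism $\sigma$, item~2 and item~4 by applying Lemma~\ref{l:comeager} to the shift (the paper handles item~4 via the superset $\bigcup_T F_T^{-1}(V)$ and Point~\ref{i:densopen}, you via the closed sets $\bigcap_{t\ge T}F_t^{-1}(V)$ and Points~\ref{i:meagerco}--\ref{i:coseq}, an inessential difference), and item~3 by exactly the paper's key trick of taking a spatially periodic configuration in the realm inside the target cylinder and globalizing the estimate with Lemma~\ref{l:lipschmodSFT} before invoking the SFT property. The only cosmetic difference in item~3 is that the paper explicitly builds $\dinf{(uw)}$ from a cylinder $[u]\subseteq\AN[\FF]V$ and then shifts into $[w]$, whereas you get a periodic point in $[w]\cap\AN[\FF]V$ directly from item~2 and density of periodic configurations.
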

	\begin{proof}~\begin{enumerate}
			\item This is clear by definition that $\omega_\FF(\sigma(x))=\sigma(\omega_\FF(x))$.
			\item 
			From the previous point, $\AN[\FF]V$ is strongly $\sigma$-invariant.
			Besides, one can see that, if $V$ is closed, then $\AN[\FF]V=\bigcap_{\varepsilon>0}\FN[\FF]{\B_\varepsilon(V)}$ has the Baire property.
			The three statements then come from Lemma~\ref{l:comeager} (applied to the uniform DS $\sigma$).
			\item Let us show that, for an arbitrary $w\in A^*$, $[w]\cap\FN[\FF]V$ is nonempty.
			Since $\AN[\FF]V$ has nonempty interior, there exists $u\in A^*$ such that $[u]\subseteq\AN[\FF]V$.
			\\
			Let $x=\dinf{(uw)}\in[u]$ be the periodic configuration of period $p=\len{uw}$ and such that $x_{\co0p}=uw$.
			\\
			Since $x\in[u]\subseteq\AN[\FF]V$, there exists $T\in \N$ such that $\forall t>T, d(F_t(x),V)<2^{-k-p}$.
			Since $\forall n\in\Z,x=\sigma^{np}(x)$, we even have:
			\[\forall t>T, \forall n\in\Z,d(F_t\sigma^{np}(x),V)<2^{-k-p}~.\]
			By Point \ref{i:lipschmod} of Lemma~\ref{l:lipschmodSFT}, for all such $t>T$,
			$\forall i\in \Z, \sigma^iF_t(x)\in \B_{2^{-k}}(V)$.
			Since $V$ is a $2k+1$-SFT, $F_t(x)\in V$, by Point \ref{i:SFT} of Lemma~\ref{l:lipschmodSFT}, that is, $x\in\FN[\FF]V$.
			By shift-invariance, we also have that $\sigma^{\len u}(x)\in[w]\cap\FN[\FF]V$.
			\item By definition, $\FN[\FF]V\subseteq\bigcup_{T\in\N}{F^{-1}_T(V)}$.
			If for every $T\in\N$, $F^{-1}_T(V)$ is not full, Point~\ref{i:densopen} of Lemma~\ref{l:comeager} gives that it has empty interior (because it is closed and strongly $\sigma$-invariant).
			In the end, $\FN[\FF]V$ is meager.
			\popQED\end{enumerate}
	\end{proof}
	Unsurprisingly, realms of CA behave well with respect to the shift.
	\begin{prop}\label{p:omegainv}
		Let $F$ be a CA, $h$ a curve, and $U$ be strongly $\sigma$-invariant.
		Then $F(\omega_{F,h}(U))=\sigma(\omega_{F,h}(U))=\omega_{F,h}(U)$.
	\end{prop}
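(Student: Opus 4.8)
The plan is to establish the two equalities separately: shift-invariance follows almost immediately from Proposition~\ref{p:intnvdense}, and $F$-invariance from Proposition~\ref{p:liminv}, with a pigeonhole argument that reduces the time-varying shift $\sigma^{h(t+1)-h(t)}$ to a single shift that the already-established shift-invariance absorbs.

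First I would observe that each $F_t:=F^t\sigma^{h(t)}$ is a composition of cellular automata, hence itself a CA, so $(F^t\sigma^{h(t)})_{t\in\N}$ is a sequence of CA over $A^\Z$ and Point~\ref{i:shiftinv} of Proposition~\ref{p:intnvdense} gives $\omega_{F,h}(\sigma(U))=\sigma(\omega_{F,h}(U))$. Since $U$ is strongly $\sigma$-invariant, $\sigma(U)=U$, so $\sigma(\omega_{F,h}(U))=\omega_{F,h}(U)$; as $\sigma$ is a homeomorphism, this also yields $\sigma^j(\omega_{F,h}(U))=\omega_{F,h}(U)$ for every $j\in\Z$, which I will use below.

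For the $F$-invariance, the CA $F$ in direction $h$ is semi-uniform after a harmless normalization: replacing $h$ by $h-h(0)$ only composes every $F_t$ with the fixed homeomorphism $\sigma^{-h(0)}$ (which commutes with $F$ and $\sigma$) and hence leaves the statement unchanged, so we may assume $h(0)=0$; then $F^t\sigma^{h(t)}=G_t\cdots G_1$ with $G_t:=F\sigma^{h(t)-h(t-1)}$, and since $\abs{h(t)-h(t-1)}\le M_h$ the family $\sett{G_t}{t\ge1}$ lies in the finite set $\sett{F\sigma^j}{\abs j\le M_h}$ and is therefore equicontinuous. Now apply Proposition~\ref{p:liminv} with $j=1$ to each singleton $\{x\}$ with $x\in U$, recalling $\omega_{F,h}(U)=\bigcup_{x\in U}\Omega_{F,h}(\{x\})$. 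For $F(\omega_{F,h}(U))\subseteq\omega_{F,h}(U)$: if $y\in\Omega_{F,h}(\{x\})$, the first point of Proposition~\ref{p:liminv} gives a limit point $z\in\Omega_{F,h}(\{x\})$ of $(G_{t+1}(y))_t=(F\sigma^{h(t+1)-h(t)}(y))_t$; this sequence takes only the finitely many values $F\sigma^j(y)$ with $\abs j\le M_h$, so $z=F\sigma^j(y)$ for one such $j$, whence $F(y)=\sigma^{-j}(z)\in\sigma^{-j}(\omega_{F,h}(U))=\omega_{F,h}(U)$. For $\omega_{F,h}(U)\subseteq F(\omega_{F,h}(U))$: if $z\in\Omega_{F,h}(\{x\})$, the second point of Proposition~\ref{p:liminv} provides $y\in\Omega_{F,h}(\{x\})$ with $z$ a limit point of $(G_{t+1}(y))_t$, so again $z=F\sigma^j(y)=F(\sigma^j(y))$ for some $\abs j\le M_h$, and $\sigma^j(y)\in\omega_{F,h}(U)$, so $z\in F(\omega_{F,h}(U))$.

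Combining the two parts gives $F(\omega_{F,h}(U))=\omega_{F,h}(U)=\sigma(\omega_{F,h}(U))$. Nothing deep is involved; the only care needed is the bookkeeping to fit the direction-$h$ system into the semi-uniform format and the step identifying a limit point of the (always) finitely valued sequence $(G_{t+1}(y))_t$ with one value $F\sigma^j(y)$ — once that single shift is isolated, shift-invariance finishes it. One could equally sidestep Proposition~\ref{p:liminv}: from a subsequence $(F^{t_k}\sigma^{h(t_k)}(x))_k$ converging to a point of $\Omega_{F,h}(\{x\})$, pass to convergent subsequences of $(F^{t_k\pm1}\sigma^{h(t_k)}(x))_k$, line up the finitely many values of $h(t_k)-h(t_k\pm1)$ by pigeonhole, and invoke continuity of $F$ — the same argument with the machinery unpacked.
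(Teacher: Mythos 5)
Your proof is correct and follows essentially the same route as the paper: shift-invariance from Point~1 of Proposition~\ref{p:intnvdense}, then Proposition~\ref{p:liminv} applied with $j=1$ to $G_t=F\sigma^{h(t)-h(t-1)}$ (a finite, hence equicontinuous, family), identifying the limit point of the finitely valued sequence with some $F\sigma^j(y)$ and absorbing the shift by the already-established $\sigma$-invariance. The only difference is that you make explicit the harmless $h(0)$ normalization needed to write the direction-$h$ system in semi-uniform form, a detail the paper leaves implicit.
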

	\begin{proof}
		By Point~\ref{i:shiftinv} of Proposition~\ref{p:intnvdense}, 
		$\omega_{F,h}(U)$ is strongly $\sigma$-invariant, and since $F$ commutes with $\sigma$, we have that $\forall t\ge1,F\sigma^{h(t)-h(t-1)}(\omega_{F,h}(U))=F(\omega_{F,h}(U))$.
		By Proposition~\ref{p:liminv} (applied to $j=1$ and $G_t=F\sigma^{h(t)-h(t-1)}$, so that $\sett{G_t}{t\in\N}$ is finite), we obtain that for any $y,z\in\omega_{F,h}(U)$, $F\sigma^k(y)\in\omega_{F,h}(U)$ for some $k$, and $z=F\sigma^l(x)$ for some $l$ and some $x\in\omega_{F,h}(U)$.
	\end{proof}
	
	\begin{crl}\label{c:fini}
		Let $F$ be a CA, $h$ a curve and $U$ such that $V=\omega_{F,h}(U)$ is finite 
		and $U=\AN[F,h]V$ {is strongly $\sigma$-invariant.}
		Then $F$ induces a self-bijection of $V$, and $U=\bigsqcup_{y\in V}\asym[F,h]y$.
	\end{crl}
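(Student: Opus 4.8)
The plan is to combine the finiteness of $V=\omega_{F,h}(U)$ with Proposition~\ref{p:omegainv} and Proposition~\ref{p:fini}. First I would note that, by Proposition~\ref{p:omegainv} applied to the strongly $\sigma$-invariant set $U$, we have $F(V)=\sigma(V)=V$; in particular $F$ restricts to a surjective self-map of the finite set $V$, hence to a bijection of $V$. This immediately gives the first claim.

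Next I would establish the decomposition $U=\bigsqcup_{y\in V}\asym[F,h]y$. The inclusion $\bigcup_{y\in V}\asym[F,h]y\subseteq U$ is routine: if $x$ is asymptotic to some $y\in V$, then $\lim_{t\to\infty}d(F^t\sigma^{h(t)}(x),F^t\sigma^{h(t)}(y))=0$, and since $F^t\sigma^{h(t)}(y)\in V$ for all $t$ (here I use that $V$ is invariant under $F\sigma^{h(t+1)-h(t)}$, which follows from $V$ being strongly $\sigma$-invariant and $F$ commuting with $\sigma$), we get $d(F^t\sigma^{h(t)}(x),V)\to0$, i.e. $x\in\AN[F,h]V=U$. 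For the reverse inclusion, take $x\in U=\AN[F,h]V$; I want to produce some $y\in V$ with $x\in\asym[F,h]y$. Apply Proposition~\ref{p:fini} to the semi-uniform DS $(F^t\sigma^{h(t)})_t$ (which is semi-uniform because the generators $F\sigma^{h(t)-h(t-1)}$ take finitely many values, as $h$ has bounded variation) and the finite set $V$: this gives at most $\card V$ asymptotic classes inside $U$. It remains to see that each class has a representative that is itself in $V$, equivalently that each asymptotic class meets $V$ — or more directly, that the $\card V$ points of $V$ already realize $\card V$ distinct asymptotic classes, so that every point of $U$ is asymptotic to one of them.

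To see that distinct points of $V$ lie in distinct asymptotic classes, suppose $y,y'\in V$ are asymptotic. Since $F$ is a bijection of $V$ and $\sigma$ is a bijection of $V$, the map $G_t:=F\sigma^{h(t)-h(t-1)}$ restricts to a bijection of $V$, so $F^t\sigma^{h(t)}$ restricts to a bijection of $V$ for each $t$; applying Remark~\ref{r:fini} (finite-space version, in the semi-uniform guise, or just: on the finite discrete space $V$ an asymptotic pair of points with all iterates injective must coincide) yields $y=y'$. Combined with the bound of $\card V$ asymptotic classes from Proposition~\ref{p:fini}, we conclude that the $\card V$ classes $\asym[F,h]y$, $y\in V$, are exactly all asymptotic classes in $U$, and they are pairwise disjoint (asymptoticity is an equivalence relation), so $U=\bigsqcup_{y\in V}\asym[F,h]y$.

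The main obstacle I anticipate is the bookkeeping around semi-uniformity: one must check carefully that $(F^t\sigma^{h(t)})_t$ genuinely fits the "$\FF=(G_{\cc1t})_t$ with $(G_t)$ equicontinuous" hypothesis of Propositions~\ref{p:fini} and the finite-space rigidity argument — this hinges on $h$ being a curve (bounded variation), so that $\sett{\sigma^{h(t)-h(t-1)}}{t\ge1}$ is finite and the composition factorization $F^{t}\sigma^{h(t)}=G_{\cc1t}$ holds with $G_t=F\sigma^{h(t)-h(t-1)}$. Once that is in place, everything else is an application of the already-proven Propositions~\ref{p:omegainv} and~\ref{p:fini} plus the elementary fact that a bijection of a finite discrete space has no nontrivial asymptotic pairs.
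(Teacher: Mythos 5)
Your proposal is correct and follows essentially the same route as the paper: Proposition~\ref{p:omegainv} gives $F(V)=\sigma(V)=V$ and hence a bijection of the finite set $V$, Proposition~\ref{p:fini} bounds the number of asymptotic classes in $U$ by $\card V$, and the finiteness-plus-injectivity rigidity (Remark~\ref{r:fini}) shows distinct points of $V\subseteq U$ lie in distinct classes, forcing $U=\bigsqcup_{y\in V}\asym[F,h]y$. Your explicit checks of semi-uniformity via bounded variation and of the easy inclusion $\asym[F,h]y\subseteq U$ are just slightly more detailed versions of what the paper leaves implicit.
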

	\begin{proof}
		By Proposition~\ref{p:omegainv}, we see that $F(V)=V$, so that $F$ induces a surjection, hence a bijection of $V$.
		\\
		By Proposition~\ref{p:fini}, there are at most $\card V$ asymptotic classes in $U$.
		By the first point, $V\subseteq U$, so that each $y\in V$ should be in one of these classes.
		By Remark~\ref{r:fini}, they are all in distinct classes, so that we obtain 
		{$U\supseteq\bigsqcup_{y\in V}\asym[F,h]y$} (the converse inclusion being trivial).
	\end{proof}
	
	\paragraph{Attractors.}
	In a DS, an \dfn{attractor} is the limit set of an \dfn{inward set}, that is an open set $U$ such that $F_{t+1}(\cl U)\subseteq F_t(U)$, for all $t\in\N$.
	There are other definitions of attractors in the literature, but this one, found for example in \cite{hurley,Petr}, is particularly relevant in totally disconnected spaces, where $U$ can equivalently simply be assumed to be an invariant clopen set.
	References \cite{kurkasubshift,formenti} focus on subshift attractors of CA: in that case the attractor enjoys a definition as the limit set of a so-called \emph{spreading} cylinder.
	$\Omega_\FF=\Omega_\FF(X)$ is then the (unique) {maximal attractor}.
	A \dfn{quasi-attractor} is an intersection of attractors (possibly empty, in our setting).
	The minimal quasi-attractor is thus the intersection of all attractors.
	
	Directly from Point~\ref{i:intnvdense} of Proposition~\ref{p:intnvdense}, we recover the following (recall that every attractor has a nonempty open realm).
	\begin{crl}[\cite{Pietro}]\label{c:subattr}
		For any CA in any direction, the realm of any subshift attractor is a dense open set.
	\end{crl}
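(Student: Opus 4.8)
The plan is to read a subshift attractor $V$ of a CA $F$ in direction $h$ as a closed, strongly $\sigma$-invariant set whose realm already has nonempty interior, and then to feed this into the dichotomy of Point~\ref{i:intnvdense} of Proposition~\ref{p:intnvdense}. Concretely, I would fix $F$, a curve $h$, and a subshift $V$ which is an attractor for $\FF=(F^t\sigma^{h(t)})_t$. This $\FF$ is a sequence of CA over $A^\Z$, so Proposition~\ref{p:intnvdense} applies. Since $V$ is a subshift it is closed and strongly $\sigma$-invariant, which is the hypothesis needed for Point~\ref{i:intnvdense}.

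Next I would check that $\AN[F,h]V$ has nonempty interior. Being an attractor, $V=\Omega_\FF(U)$ for some inward set $U$; from $F_{t+1}(\cl U)\subseteq F_t(U)$ the sets $F^t\sigma^{h(t)}(U)$ are decreasing, so $\Omega_\FF(U)=\bigcap_T\cl{F^T\sigma^{h(T)}(U)}$, and for every $x\in U$ the orbit $(F^t\sigma^{h(t)}(x))_t$ lies in these sets, whence $\omega_{F,h}(x)\subseteq V$. Thus the open set $U$ is contained in $\AN[F,h]V$ — this is exactly the recalled fact that every attractor has a nonempty open realm. Point~\ref{i:intnvdense} of Proposition~\ref{p:intnvdense} then forces the alternative ``$\AN[F,h]V$ has empty interior'' to fail, so $\AN[F,h]V$ includes a dense open set (and, $V$ being closed, is even comeager).

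Finally, to conclude that the realm \emph{is} a dense open set rather than merely includes one, I would invoke the fact (the content imported from \cite{Pietro}, which also follows from the spreading structure of subshift attractors: for $\varepsilon$ small enough $\AN[F,h]V=\bigcup_{T\in\N}\bigcap_{t\ge T}(F^t\sigma^{h(t)})^{-1}(\B_\varepsilon(V))$ collapses to a union of open sets) that $\AN[F,h]V$ is open; an open comeager set is dense, so it is precisely a dense open set. The main obstacle is exactly this last point: the machinery of Proposition~\ref{p:intnvdense} yields only ``includes a dense open set / is comeager'', and upgrading to literal openness genuinely needs the spreading property of subshift attractors rather than the bare definition of attractor. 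Everything else is a direct application of the already-established dichotomy together with the standard inward-set observation.
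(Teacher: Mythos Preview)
Your approach is exactly the paper's: its entire proof is the single sentence preceding the corollary, ``Directly from Point~\ref{i:intnvdense} of Proposition~\ref{p:intnvdense}, we recover the following (recall that every attractor has a nonempty open realm).'' That is, one observes that the inward set $U$ sits inside $\AN[F,h]V$, so the realm has nonempty interior, and then the dichotomy of Point~\ref{i:intnvdense} forces it to include a dense open set and to be comeager. You have reproduced this faithfully, and you are also right to flag that Point~\ref{i:intnvdense} by itself yields only ``includes a dense open set'' rather than ``is a dense open set''; the paper is equally terse on this point and leans on the parenthetical recall (and on \cite{Pietro}) for the openness. For the only use made of the corollary afterwards (that $\tilde\omega_\FF$ lies in every subshift attractor), comeagerness is all that is needed.

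One correction on your last paragraph: the displayed identity $\AN[F,h]V=\bigcup_{T}\bigcap_{t\ge T}(F^t\sigma^{h(t)})^{-1}(\B_\varepsilon(V))$ does not by itself exhibit the realm as open, since the inner intersection $\bigcap_{t\ge T}(\cdots)$ is a countable intersection of open sets, not an open set. The clean argument for openness is the classical one: in a totally disconnected space the inward set may be taken clopen, $V\subseteq U$, and then compactness gives $\B_\varepsilon(V)\subseteq U$ for some $\varepsilon>0$; hence $\AN[F,h]V=\bigcup_{T}F_T^{-1}(U)$, which is a union of open sets. This is presumably what the paper's ``recall'' is pointing to.
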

	The following example will be described more deeply in Example~\ref{e:sigmin}, but gives here a first illustration of the concept of limit set and attractor.
	\begin{exmp}\label{e:min}
		Let $\Min$ be defined over $\{0,1\}^\Z$ by $\Min(x)_i=\min (x_i,x_{i+1})$.
		One has $\{\dinf0\}=\bigcap_{k\ge 0}V_k$, where $V_k=\Omega_\Min([0]_k)=\sett{x\in\Omega_\Min}{\forall i\leq k, x_i=0}$ is an attractor but not a subshift, for every $k\in\Z$ (see \cite{kurka}).	$\{\dinf0\}$ is the unique minimal quasi-attractor, and its realm $\AN[\Min]{\dinf0}=\bigcap_{k\in\Z}\sett{x\in \{0,1\}^\Z}{\exists i\geq k,x_i=0}$ is comeager.	
	\end{exmp}
	
	This property of having a comeager realm motivates the next definition.
	
	\section{The generic limit set}\label{s:genericsystem}
	Milnor \cite{milnor} suggests the following definition, which is the purpose of the present section.
	
	\begin{defi}
		Being given a DS $\FF$, the \dfn{generic limit set} $\tilde\omega_\FF$ is the intersection of all the closed subsets of $X$ which have a comeager realm of attraction.
	\end{defi}
	The generic limit set $\tilde\omega_\FF$ can actually be defined as the smallest closed subset of $X$ with a comeager realm, thanks to the following proposition.
	\begin{prop}\label{p:exist}
		Let $\FF$ be a DS. The realm of the generic limit set is comeager.
	\end{prop}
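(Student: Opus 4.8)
The plan is to produce an explicit closed set $W$, written as a \emph{countable} intersection, then show both that $\AN[\FF]{W}$ is comeager and that $W$ is contained in every closed set with comeager realm; this forces $W=\tilde\omega_\FF$ and yields the statement. The only genuine difficulty is that $\tilde\omega_\FF$ is, a priori, an intersection over an \emph{uncountable} family of closed sets, so one cannot directly invoke the Baire category theorem (a countable intersection of comeager sets is comeager). The trick to overcome this is to use that $X$, being compact metric, is second countable.

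Concretely, I would first fix a countable basis $(U_n)_{n\in\N}$ of the topology of $X$, and use the elementary fact that every closed set $V$ is recovered from the basic open sets disjoint from it: $V=\bigcap\set{\compl{U_n}}{n\in\N}{U_n\subseteq\compl V}$. Then set $N=\set n\N{\AN[\FF]{\compl{U_n}}\text{ is comeager}}$ and $W=\bigcap_{n\in N}\compl{U_n}$, which is closed. By Point~\ref{i:inter} of Remark~\ref{r:union}, $\AN[\FF]{W}=\bigcap_{n\in N}\AN[\FF]{\compl{U_n}}$ is a countable intersection of comeager sets, hence comeager by the Baire category theorem; in particular $W$ is itself one of the closed sets with comeager realm appearing in the definition of $\tilde\omega_\FF$, so $\tilde\omega_\FF\subseteq W$.

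For the reverse inclusion I would check minimality of $W$: let $V$ be any closed set with $\AN[\FF]{V}$ comeager. For every $n$ with $U_n\subseteq\compl V$ we have $V\subseteq\compl{U_n}$, hence (by monotonicity of the realm, which follows from Point~\ref{i:union} of Remark~\ref{r:union}, since $V\subseteq\compl{U_n}$ gives $\AN[\FF]{\compl{U_n}}\supseteq\AN[\FF]{V}$) the set $\AN[\FF]{\compl{U_n}}$ is comeager, i.e. $n\in N$. Therefore $W=\bigcap_{n\in N}\compl{U_n}\subseteq\bigcap\set{\compl{U_n}}{n\in\N}{U_n\subseteq\compl V}=V$. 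Intersecting over all such $V$ gives $W\subseteq\tilde\omega_\FF$. Combining the two inclusions, $\tilde\omega_\FF=W$, and we have already shown $\AN[\FF]{W}$ is comeager, which is exactly the claim. The main obstacle, as noted, is purely the cardinality reduction; once $X$'s second countability is brought in, the rest is routine bookkeeping with Remark~\ref{r:union} and the Baire category theorem.
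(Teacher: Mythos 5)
Your proof is correct and takes essentially the same route as the paper: both exploit second countability of the compact metric space $X$ to reduce $\tilde\omega_\FF$ to a countable intersection of closed sets, each of which has a comeager realm by monotonicity, and then conclude with Point~\ref{i:inter} of Remark~\ref{r:union} and the Baire category theorem. Your explicit verification that the auxiliary set $W$ equals $\tilde\omega_\FF$ is just a more detailed bookkeeping of the same argument.
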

	In particular, it is nonempty!
	But much more thant this: it is the smallest closed set which includes all limit points of all \emph{generic} orbits. 
	\begin{proof}
		Any compact metric space admits a countable basis: there exists a countable set $\sett{U_i}{i\in\N}$ of closed subsets such that every closed set $U$ can be written as $\bigcap_{i\in I_U}U_i$ for some $I_U\subseteq\N$.
		In particular, $\tilde\omega_\FF$ is the intersection $\bigcap_U\bigcap_{i\in I_U}U_i$, where $U$ ranges over closed sets with comeager realm; that is, $\tilde\omega_\FF=\bigcap_{i\in I}U_i$, where $I$ is the union of $I_U$, for every closed $U$ with comeager realm.
		If $i$ is in $I$, then it is in some $I_U$, so that $U\subseteq U_i$, where $U$ has comeager realm, so that $U_i$ has comeager realm, too.
		\\
		By Point~\ref{i:inter} of Remark~\ref{r:union}, $\AN[\FF]{\tilde\omega_\FF}=\AN[\FF]{\bigcap_{i\in I}{U_i}}=\bigcap_{i\in I}\AN[\FF]{U_i}$.
		We know that an intersection of countably many comeager sets is comeager. Then $\AN[\FF]{\tilde\omega_\FF}$ is comeager.
	\end{proof}
	
	Note that the generic limit set is the closure of the asymptotic set of some comeager set {(it is exactly the closure of the asymptotic set of its realm)}, but it may not be the asymptotic set of any set: see for example Example~\ref{e:lgliders2}, where the generic limit set is full, but the asymptotic set is not.
	
	\begin{rmq}\label{r:geninter}
		Let $\FF$ be a DS over space $X$ and $V\subseteq X$.
		\begin{enumerate}
			\item\label{i:clos} If $V$ does not intersect $\tilde\omega_\FF$, then $\compl{\AN[\FF]{\compl V}}$ is meager.
			In particular, if $V$ is closed, then $\compl{\FN[\FF]{\compl V}}$ is meager, and there is no nonempty open set $U$ in which $\bigcup_{t\ge T}F_t^{-1}(V)$ is dense for all $T\in\N$.
			\item\label{i:open} If $V$ is open and intersects $\tilde\omega_\FF$, then $\compl{\AN[\FF]{\compl V}}$ is not meager.
			In particular, $\compl{\FN[\FF]{\compl V}}$ is not meager, and there exists a nonempty open set $U$ in which $\bigcup_{t\ge T}F_t^{-1}(V)$ is dense for all $T\in\N$.
		\end{enumerate}
	\end{rmq}
	There are counter-examples when this remark cannot be stated as an equivalence: take for instance open set $V=]0,1[$ in the uniform DS defined by $F_t(x)=x/2^t$ for $x\in[0,1]$.
	\begin{proof}
		We prove the first statement of each point.
		\begin{itemize}
			\item If $\compl V\supseteq\tilde\omega_\FF$, then $\AN[\FF]{\compl V}\supseteq\AN[\FF]{\tilde\omega_\FF}$ is comeager.
			\item If $V$ is open and intersects $\tilde\omega_\FF$, then $\tilde\omega_\FF\setminus V$ is closed; by the minimality of the generic limit set, $\AN[\FF]{\tilde\omega_\FF\setminus V}$ is not comeager.
			This set is equal to $\AN[\FF]{\tilde\omega_\FF}\cap\AN[\FF]{\compl V}$. Since the first one is comeager, we deduce that the second one is not.
		\end{itemize}
		The second statement of each point comes from the inclusions between realm and direct realm in Remark~\ref{r:compl}.
		The third statement comes from Remark~\ref{r:Borel} and the definition of $\compl{\FN[\FF]{\compl V}}$.
	\end{proof}
	A consequence of this is the following proposition: the generic limit set intersects any closed set with dense realm.
	\begin{prop}\label{p:densinter}
		Suppose $\FF$ is a DS and $V$ a closed set with dense realm $\AN[\FF]V$.
		Then $V$ intersects $\tilde\omega_\FF$.
	\end{prop}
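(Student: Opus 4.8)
The plan is to argue by contradiction using Remark~\ref{r:geninter}. Suppose $V\cap\tilde\omega_\FF=\emptyset$. Both sets are closed subsets of the compact space $X$, and both are nonempty: the realm of $\tilde\omega_\FF$ is comeager by Proposition~\ref{p:exist}, and $\AN[\FF]V$ is dense, hence nonempty, which forces $V\neq\emptyset$ since every $\omega_\FF(x)$ is nonempty. Hence $\rho:=d(V,\tilde\omega_\FF)>0$. I would then fix $\delta>0$ with $\delta<\rho$ and set $V'=\cl\B_\delta(V)$: this set is closed, and it still avoids $\tilde\omega_\FF$, because $\cl\B_\delta(V)\subseteq\set yX{d(V,y)\le\delta}$ and $\delta<\rho$.

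Next I would transfer the density hypothesis from $\AN[\FF]V$ onto the iterated preimages of $V'$. By Point~\ref{i:apprx} of Remark~\ref{r:union}, $\AN[\FF]V\subseteq\FN[\FF]{\B_\delta(V)}$. Unfolding the definition, $\FN[\FF]{\B_\delta(V)}=\bigcup_{S\in\N}\bigcap_{t\ge S}F_t^{-1}(\B_\delta(V))$, and for any fixed $T\in\N$ this is contained in $\bigcup_{t\ge T}F_t^{-1}(\B_\delta(V))\subseteq\bigcup_{t\ge T}F_t^{-1}(V')$, since a configuration whose orbit is eventually in $\B_\delta(V)$ has $F_t(x)\in\B_\delta(V)\subseteq V'$ for some $t\ge T$. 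Thus $\AN[\FF]V\subseteq\bigcup_{t\ge T}F_t^{-1}(V')$ for every $T\in\N$; as $\AN[\FF]V$ is dense in $X$ by hypothesis, so is $\bigcup_{t\ge T}F_t^{-1}(V')$, for every $T\in\N$.

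This contradicts Point~\ref{i:clos} of Remark~\ref{r:geninter} applied to the closed set $V'$, which does not meet $\tilde\omega_\FF$: that point asserts there is \emph{no} nonempty open set $U$ in which $\bigcup_{t\ge T}F_t^{-1}(V')$ is dense for all $T\in\N$ --- yet $U=X$ is such a set. Hence the assumption is absurd, and $V$ must intersect $\tilde\omega_\FF$.

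I expect the only genuinely delicate point to be the choice of the fattened set $V'$: one cannot feed $V$ itself into Remark~\ref{r:geninter}, since a dense realm need not be non-meager, so the meagerness of $\compl{\AN[\FF]{\compl V}}$ provided by that remark is not on its own contradictory. Passing to the closed $\delta$-neighbourhood and invoking the ``the orbit visits $V'$ infinitely often'' reformulation of Remark~\ref{r:geninter}(\ref{i:clos}), together with the chain $\AN[\FF]V\subseteq\FN[\FF]{\B_\delta(V)}\subseteq\bigcup_{t\ge T}F_t^{-1}(V')$, is what makes the density hypothesis bite; the rest is bookkeeping.
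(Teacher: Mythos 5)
Your proof is correct and follows essentially the same route as the paper: both fatten $V$ to a closed $\delta$-neighbourhood, transfer the density of $\AN[\FF]V$ to the sets $\bigcup_{t\ge T}F_t^{-1}(\cdot)$ via the inclusion $\AN[\FF]V\subseteq\FN[\FF]{\B_\delta(V)}$, and invoke Point~\ref{i:clos} of Remark~\ref{r:geninter}. The only (harmless) difference is organizational: the paper argues directly with all radii $1/n$ and concludes by a nested-compacts intersection, while you argue by contradiction and use the positive distance between the disjoint compacta $V$ and $\tilde\omega_\FF$ to fix a single radius, which removes the need for that final limiting step.
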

	\begin{proof}
		Point~\ref{i:app} of Remark~\ref{r:union} gives that {$\AN[\FF]V=\bigcap_{n>0}\FN[\FF]{\B_{1/n}(V)}$.}
		It results that each $\FN[\FF]{\B_{1/n}(V)}$ is dense.
		Its open superset $\bigcup_{t\geq T}F_t^{-1}(\B_{1/n}(V))$ should also be dense. Hence, $\compl{\FN[\FF]{\compl{\B_{1/n}(V)}}}$ is comeager. In particular, it is not meager.
		Then Point~\ref{i:clos} of Remark~\ref{r:geninter} gives that 
		$\cl{\B_{1/n}(V)}$ intersects $\tilde\omega_\FF$.
		Since this is true for every $n>0$, and $\cl{B_{1/n}(V)}$ is closed, $V$ should intersect $\tilde\omega_\FF$.
	\end{proof}
	
	\subsection{Nonwandering systems}
	We will see that, for nonwandering dynamical systems, the generic limit set is the full space.
	Let us prove a more general result, which will be also useful for Corollary~\ref{c:obliq}.
	We say that a DS $\FF$ is \dfn{semi-nonwandering} if for every open subset $U$ which intersects $\Omega_\FF$, $\compl{\FN[\FF]{\compl U}}$ is not meager. 
	It is clear that a DS over some space $X$ is nonwandering if and only if it is semi-nonwandering and its limit set is $X$ (note that this second property happens, in the uniform case, exactly for surjective systems).

	\begin{prop}\label{p:snwand}
		A DS $\FF$ is semi-nonwandering if and only if $\tilde\omega_\FF=\Omega_\FF$.
	\end{prop}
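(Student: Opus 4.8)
The plan is to first record the inclusion $\tilde\omega_\FF\subseteq\Omega_\FF$, which holds for every DS, and then to observe that the two implications of the equivalence follow almost directly from Remark~\ref{r:geninter}, provided one is careful to feed that remark a \emph{closed} set and to feed the semi-nonwanderingness hypothesis an \emph{open} set. For the unconditional inclusion: $\Omega_\FF$ is closed, so by Remark~\ref{r:omegastable} (with $U=X$) together with Point~\ref{i:app} of Remark~\ref{r:union}, $d(F_t(x),\Omega_\FF)\to0$ for every $x\in X$, i.e.\ $\AN[\FF]{\Omega_\FF}=X$; thus $\Omega_\FF$ is a closed set with comeager realm, and the minimality of the generic limit set (Proposition~\ref{p:exist}) yields $\tilde\omega_\FF\subseteq\Omega_\FF$.

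Assume first that $\tilde\omega_\FF=\Omega_\FF$, and let $U$ be an open set with $U\cap\Omega_\FF\ne\emptyset$. Then $U\cap\tilde\omega_\FF\ne\emptyset$, so Point~\ref{i:open} of Remark~\ref{r:geninter} applied to $V=U$ gives that $\compl{\FN[\FF]{\compl U}}$ is not meager; as $U$ was an arbitrary open set meeting $\Omega_\FF$, this says exactly that $\FF$ is semi-nonwandering.

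Conversely, assume $\FF$ is semi-nonwandering. By the inclusion above it is enough to prove $\Omega_\FF\subseteq\tilde\omega_\FF$, so suppose for contradiction that there is some $z\in\Omega_\FF\setminus\tilde\omega_\FF$. Since $\tilde\omega_\FF$ is closed, pick $\delta>0$ with $\delta<d(z,\tilde\omega_\FF)$, so that the closure $\cl\B_\delta(z)$ of the open ball $\B_\delta(z)$ is still disjoint from $\tilde\omega_\FF$. Now $\B_\delta(z)$ is an open set meeting $\Omega_\FF$ (it contains $z$), so semi-nonwanderingness gives that $\compl{\FN[\FF]{\compl{\B_\delta(z)}}}$ is not meager; on the other hand $\cl\B_\delta(z)$ is closed and disjoint from $\tilde\omega_\FF$, so Point~\ref{i:clos} of Remark~\ref{r:geninter} gives that $\compl{\FN[\FF]{\compl{\cl\B_\delta(z)}}}$ is meager. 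But $\B_\delta(z)\subseteq\cl\B_\delta(z)$ forces $\FN[\FF]{\compl{\cl\B_\delta(z)}}\subseteq\FN[\FF]{\compl{\B_\delta(z)}}$, hence $\compl{\FN[\FF]{\compl{\B_\delta(z)}}}\subseteq\compl{\FN[\FF]{\compl{\cl\B_\delta(z)}}}$ is meager as well, a contradiction. I do not expect a genuine obstacle here: the result essentially repackages Remark~\ref{r:geninter}, and the only point requiring attention is precisely this open/closed bookkeeping, which is why one separates $\B_\delta(z)$ from its closure $\cl\B_\delta(z)$ rather than applying either tool to both.
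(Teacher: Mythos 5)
Your proof is correct and follows essentially the same route as the paper's: both directions rest on Remark~\ref{r:geninter} (equivalently, the minimality of $\tilde\omega_\FF$), with the same passage from the open ball to its closure via the inclusion of direct realms — the paper merely phrases the forward direction contrapositively and the converse as a limit over all $\varepsilon>0$ rather than a contradiction at a fixed $\delta$. One cosmetic quibble: for $\tilde\omega_\FF\subseteq\Omega_\FF$ you should apply Remark~\ref{r:omegastable} to singletons $\{x\}$ rather than to $U=X$ (the paper's $d(U,V)$ is an infimum, so the case $U=X$ does not give pointwise convergence), or simply note that $\omega_\FF(x)\subseteq\Omega_\FF$ by definition, so that $\AN[\FF]{\Omega_\FF}=X$ is immediate.
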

	\begin{proof}~\begin{itemize}
			\item Suppose $\FF$ is not semi-nonwandering.
			This means that there exists an open set $U$ which intersects $\Omega_\FF$, and such that $\FN[\FF]{\compl U}$ is comeager.
			Since $\compl U$ is closed, $\AN[\FF]{\compl U}\supseteq\FN[\FF]{\compl U}$ is also comeager.
			By definition, $\tilde\omega_\FF$ is then included in $\compl U$, and thus cannot include $\Omega_\FF$. 
			\item Conversely, suppose that $\FF$ is semi-nonwandering, $x\in\Omega_\FF$ and $\varepsilon>0$.
			By definition, $\compl{\FN[\FF]{\compl{\B_\varepsilon(x)}}}$ is not meager.
			By inclusion, neither is $\compl{\FN[\FF]{\compl{\cl\B_\varepsilon(x)}}}$.
			By Point~\ref{i:clos} of Remark~\ref{r:geninter}, we deduce that $\cl\B_\varepsilon(x)$ intersects $\tilde\omega_\FF$.
			Since this is true for every $\varepsilon>0$, we get that $x\in\tilde\omega_\FF$.
			The inclusions $\tilde\omega_\FF\subseteq\cl{\omega_\FF}\subseteq\Omega_\FF$ are always true.
			\popQED\end{itemize}
	\end{proof}
	The following is a direct corollary of Proposition~\ref{p:snwand}.
	\begin{crl}\label{c:wandering}
		A DS $\FF$ over some space $X$ is nonwandering if and only if $\tilde\omega_\FF=X$.
	\end{crl}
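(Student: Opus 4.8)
The plan is to deduce this corollary directly from Proposition~\ref{p:snwand} together with the characterization of nonwanderingness observed earlier in Subsection~\ref{ss:nwand}, namely that $\FF$ is nonwandering if and only if it is semi-nonwandering and $\Omega_\FF=X$. Both implications should then be essentially bookkeeping.

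First I would treat the forward direction. Suppose $\FF$ is nonwandering. By the observation just recalled, $\FF$ is in particular semi-nonwandering, so Proposition~\ref{p:snwand} gives $\tilde\omega_\FF=\Omega_\FF$; and nonwanderingness also forces $\Omega_\FF=X$ (this is the ``surjective'' part of the remark in the uniform case, but holds in general: if $\FF$ is nonwandering then for every nonempty open $U$ the set $\compl{\FN[\FF]{\compl U}}$ of points whose orbit visits $U$ infinitely often is nonempty, so in particular $U$ meets $\Omega_\FF$, whence $\Omega_\FF$ is dense and closed, hence all of $X$). Combining, $\tilde\omega_\FF=X$.

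For the converse, suppose $\tilde\omega_\FF=X$. Since $\tilde\omega_\FF\subseteq\cl{\omega_\FF}\subseteq\Omega_\FF\subseteq X$ always, this forces $\Omega_\FF=X$, and in particular $\tilde\omega_\FF=\Omega_\FF$, so by Proposition~\ref{p:snwand} again $\FF$ is semi-nonwandering. But semi-nonwandering plus $\Omega_\FF=X$ is exactly nonwanderingness (every nonempty open $U$ trivially intersects $\Omega_\FF=X$, so $\compl{\FN[\FF]{\compl U}}$ is not meager), so $\FF$ is nonwandering.

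I do not anticipate a genuine obstacle here, since all the real work was done in Proposition~\ref{p:snwand}; the only mild care needed is to make fully explicit that nonwanderingness decomposes as ``semi-nonwandering $+$ full limit set'', which the text already asserts, and to invoke the standing inclusions $\tilde\omega_\FF\subseteq\cl{\omega_\FF}\subseteq\Omega_\FF$ for the converse.
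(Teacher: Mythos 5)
Your proposal is correct and follows exactly the route the paper intends: the corollary is stated there as a direct consequence of Proposition~\ref{p:snwand} combined with the already-asserted decomposition ``nonwandering $\iff$ semi-nonwandering and $\Omega_\FF=X$'', which is precisely your bookkeeping in both directions. The only micro-quibble is in your parenthetical justification that nonwanderingness forces $\Omega_\FF=X$: a point whose orbit visits $U$ infinitely often only gives a limit point in $\cl U\cap\Omega_\FF$ rather than in $U$ itself, but density (hence fullness) of the closed set $\Omega_\FF$ still follows by shrinking to a ball whose closure lies in $U$, so nothing is lost.
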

	It is clear that the two properties in Corollary~\ref{c:wandering} imply surjectivity.
	Since it is known that surjective CA are all nonwandering (see for instance \cite[Prop~5.23]{kurka}), we get the following.
	\begin{crl}\label{c:surjective}
		A CA $F$ over $A^\Z$ is surjective if and only if $\tilde\omega_F= A^{\Z}$,
		if and only if $\omega_F$ is comeager.
	\end{crl}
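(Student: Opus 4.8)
The plan is to use surjectivity as the hub and prove its equivalence with each of the other two conditions, relying throughout on the inclusion chain $\tilde\omega_F\subseteq\cl{\omega_F}\subseteq\Omega_F=\bigcap_{t\in\N}F^t(A^\Z)\subseteq F(A^\Z)$. Here the first two inclusions hold for any DS (they appear in the proof of Proposition~\ref{p:snwand}); the equality holds because $A^\Z$ is closed and $\FF$-invariant; and the last inclusion is trivial. The other ingredients are Corollary~\ref{c:wandering} and the classical fact that every surjective CA is nonwandering \cite[Prop~5.23]{kurka}.

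First, if $F$ is surjective then it is nonwandering, so Corollary~\ref{c:wandering} gives $\tilde\omega_F=A^\Z$; this is one direction. Conversely, if $\tilde\omega_F=A^\Z$ then the chain above yields $A^\Z=\tilde\omega_F\subseteq F(A^\Z)$, so $F$ is surjective (this is exactly the observation, recorded just after Corollary~\ref{c:wandering}, that its two equivalent properties imply surjectivity). The same chain also settles the implication ``$\omega_F$ comeager $\implies F$ surjective'': if $\omega_F$ is comeager it is dense, hence its closed superset $\Omega_F$ equals $A^\Z$, and therefore so does $F(A^\Z)$. So far these are all routine unwindings of definitions.

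It remains to prove that surjectivity (equivalently, nonwanderingness) implies $\omega_F$ comeager. Since every recurrent point lies in its own asymptotic set, $\recu\subseteq\omega_F$, so it suffices to show the recurrent set $\recu$ is comeager — the residuality of the recurrent set in a nonwandering system \cite{Vries}. If one prefers a self-contained argument in this setting: $A^\Z$ has the countable basis of cylinders, and for each cylinder $C$ the set $\compl C\cup\bigl(C\cap\compl{\FN[F]{\compl C}}\bigr)$ is comeager in $A^\Z$, being comeager in each of the two clopen pieces $C$ and $\compl C$ separately — the comeagerness in $C$ of the set of points whose orbit returns to $C$ infinitely often is precisely what the proof of the remark characterizing nonwandering uniform DS establishes (nonwanderingness supplies the required return condition). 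Intersecting these comeager sets over the countably many cylinders shows $\recu$, hence $\omega_F$, is comeager. This last step — residuality of $\recu$ for nonwandering $F$ — is the only point that is not pure definition-chasing, and I would present it by invoking the known statement, optionally followed by the short cylinder-by-cylinder Baire-category argument above.
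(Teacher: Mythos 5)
Your proof is correct and follows essentially the same route as the paper: the first equivalence via Corollary~\ref{c:wandering} together with the nonwanderingness of surjective CA (and the trivial chain $\tilde\omega_F\subseteq\cl{\omega_F}\subseteq\Omega_F\subseteq F(A^\Z)$ for the converses), and the implication ``surjective $\implies\omega_F$ comeager'' via the comeager set of recurrent points, which lies in $\omega_F$. Your optional cylinder-by-cylinder Baire argument is just a correct unpacking of the cited residuality of the recurrent set (and of the second bullet in the paper's remark on nonwandering uniform DS), not a different approach.
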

	The second statement is also true for uniform DS \cite[Cor~26]{ultfini}.
	\begin{proof}
		The first statement is a direct corollary of Corollary~\ref{c:wandering}.
		
		For the second statement, nonwandering uniform DS are known to admit a comeager set of recurrent points, which are all in $\omega_\FF$ (see Subsection~\ref{ss:nwand}).
	\end{proof}
	
	\begin{figure}[ht]
		\centering
		\includegraphics[width=6 cm]{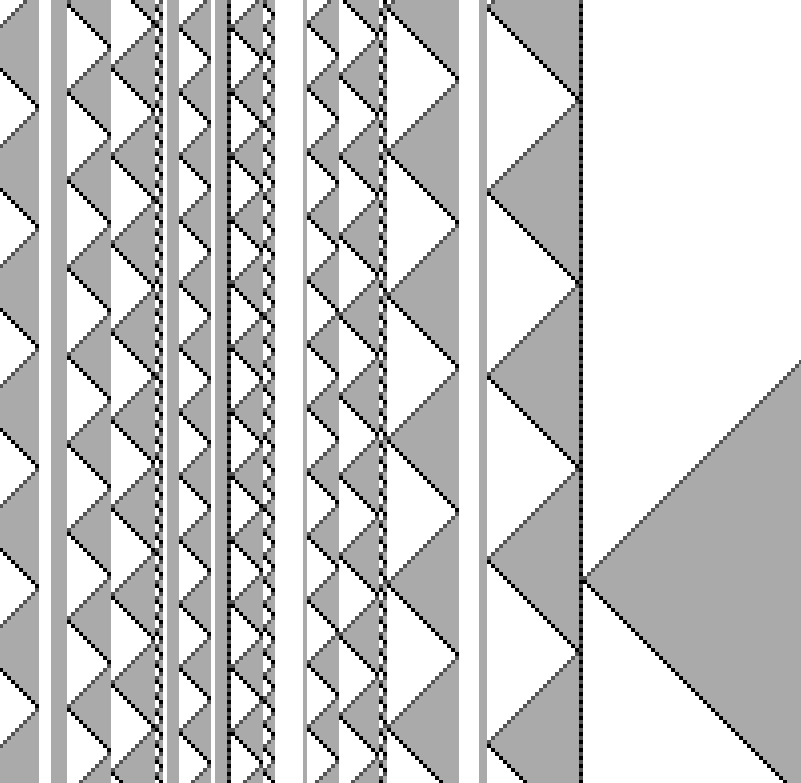}
		\caption{Lonely Gliders:
			space is horizontal and time goes upwards;
			$\leftarrow$ \resp{$>$} are represented by black\resp{white} squares, and $\rightarrow$ \resp{$<$} are represented by dark\resp{light} grey squares.}
		\label{fig:glid}
	\end{figure}
	The following example answers a question left open in \cite{ultfini}: the asymptotic set of a surjective CA is comeager, but not always full.
	\begin{exmp}[Lonely gliders]\label{e:lgliders}
		Let $A=\{>,<,\rightarrow,\leftarrow\}$, and $F$ the CA, defined by the following local rule:
		\[f:(x_{-1},x_0,x_1)\mapsto\soit{
			\rightarrow&\si x_{-1}=\rightarrow\et x_0=<\\
			\rightarrow&\si x_{-1}\ne> \et x_0=\leftarrow\\
			<&\si x_{-1}=> \et x_0=\leftarrow\\
			>&\si x_0=\rightarrow\et x_1=<\\
			\leftarrow&\si x_0=\rightarrow\et x_1\ne<\\
			\leftarrow&\si x_0=> \et x_1=\leftarrow\\
			x_0&\sinon~.}\]
		A typical space-time diagram of this CA is shown in Figure~\ref{fig:glid}.
		Intuitively, each configuration can be decomposed into valid zones, which contain at most one arrow, towards with chevrons $<$ and $>$ are supposed to point. The arrow moves in the direction to which it points, until it reaches the end of the zone (noticed by an \emph{invalid} pattern of the form $ab$, where $a\ne>$ and $b\in\{>,\rightarrow,\leftarrow\}$, or symmetric), in which case it turns back.
		With this in mind, it is not difficult to understand that $F$ is reversible (hence surjective) and that any \emph{invalid} pattern is a blocking word.
		From Corollary~\ref{c:surjective}, the asymptotic set is comeager. Yet, it is not full.
		
	\end{exmp}
	\begin{proof}
		Let us prove that some configuration $x$ with an infinite valid zone which contains one arrow cannot be the limit point of any orbit.
		Indeed, any configuration whose orbit comes arbitrarily close to $x$ should also have an infinite valid zone (because the zones are invariant), and hence at most one arrow in it (like the configuration whose orbit is illustrated in Figure~\ref{fig:glid}.
		Any limit point of such an orbit has no arrow in its infinite valid zone (the arrow \emph{goes to infinity}).
	\end{proof}
	
	\subsection{First property for CA}
	It is rather clear that the generic limit set of a uniform DS induces a subsystem. Let us see that this is true also for directional dynamics CA, and that it is also invariant by shift.
	\begin{prop}\label{p:subsh}
		Let $\FF=(F_t)_t$ be a sequence of CA.
		Then $\tilde\omega_\FF$ is a subshift.
		Its realm is strongly $\sigma$-invariant.
	\end{prop}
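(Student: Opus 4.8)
The plan is to prove two statements: that $\tilde\omega_\FF$ is $\sigma$-invariant (hence a subshift, being closed by definition), and that its realm $\AN[\FF]{\tilde\omega_\FF}$ is strongly $\sigma$-invariant. The key observation is that each $F_t$ commutes with $\sigma$, so $\sigma$ is a symmetry of the whole system, and both the generic limit set and its realm are defined by intrinsic conditions that such a symmetry must preserve.

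First I would handle the realm. By Proposition~\ref{p:exist}, $\AN[\FF]{\tilde\omega_\FF}$ is comeager. By Point~\ref{i:shiftinv} of Proposition~\ref{p:intnvdense}, for any closed $V$ we have $\AN[\FF]{\sigma(V)}=\sigma(\AN[\FF]V)$; since $\sigma(\tilde\omega_\FF)$ is closed and $\sigma$ is a homeomorphism, $\sigma(\tilde\omega_\FF)$ has comeager realm as well. But then, by the minimality of the generic limit set, $\tilde\omega_\FF\subseteq\sigma(\tilde\omega_\FF)$, and applying $\sigma^{-1}$ (also an automorphism of the system) gives the reverse inclusion, so $\sigma(\tilde\omega_\FF)=\tilde\omega_\FF$. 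This shows $\tilde\omega_\FF$ is strongly $\sigma$-invariant (it is closed and $\sigma$ is a homeomorphism with $\sigma(\tilde\omega_\FF)=\tilde\omega_\FF$, hence also $\sigma^{-1}(\tilde\omega_\FF)=\tilde\omega_\FF$), and in particular it is a subshift. Then applying Point~\ref{i:shiftinv} once more, $\AN[\FF]{\tilde\omega_\FF}=\AN[\FF]{\sigma(\tilde\omega_\FF)}=\sigma(\AN[\FF]{\tilde\omega_\FF})$, and likewise $\sigma^{-1}(\AN[\FF]{\tilde\omega_\FF})=\AN[\FF]{\tilde\omega_\FF}$, so the realm is strongly $\sigma$-invariant.

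The main subtlety to be careful about is that Point~\ref{i:shiftinv} of Proposition~\ref{p:intnvdense} is stated for $\sigma(V)$ but we also need $\sigma^{-1}(V)$; this is fine because $\sigma^{-1}$ is itself the shift by $-1$, which is equally a CA (or one can note $V = \sigma(\sigma^{-1}(V))$ and apply the identity to $\sigma^{-1}(V)$), so no extra work is needed. One should also make sure the minimality argument is applied correctly: $\tilde\omega_\FF$ is by definition the smallest closed set with comeager realm, so from "$\sigma(\tilde\omega_\FF)$ closed with comeager realm" we genuinely get $\tilde\omega_\FF\subseteq\sigma(\tilde\omega_\FF)$. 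I do not anticipate a real obstacle here — the proof is a short symmetry argument leaning entirely on results already established, principally Proposition~\ref{p:exist} and Point~\ref{i:shiftinv} of Proposition~\ref{p:intnvdense}.
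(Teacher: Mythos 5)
Your proof is correct, but it takes a different path from the paper's. You apply minimality directly: using Proposition~\ref{p:exist} and the identity $\AN[\FF]{\sigma(V)}=\sigma(\AN[\FF]V)$ (Point~\ref{i:shiftinv} of Proposition~\ref{p:intnvdense}), you observe that $\sigma(\tilde\omega_\FF)$ and $\sigma^{-1}(\tilde\omega_\FF)$ are closed sets with comeager realms, hence each contains $\tilde\omega_\FF$, forcing $\sigma(\tilde\omega_\FF)=\tilde\omega_\FF$; the realm's strong invariance then drops out of the same identity. You are also right that the two shifted copies are both needed ($K\subseteq\sigma(K)$ alone does not give equality) and that the $\sigma^{-1}$ case is covered because $\sigma^{-1}$ is itself a CA. The paper instead builds the strongly $\sigma$-invariant comeager set $W=\bigcap_{k\in\Z}\sigma^k(\AN[\FF]{\tilde\omega_\FF})$, identifies $\tilde\omega_\FF=\cl{\omega_\FF(W)}$ (minimality gives one inclusion, monotonicity of $\omega_\FF$ together with $\tilde\omega_\FF=\cl{\omega_\FF(\AN[\FF]{\tilde\omega_\FF})}$ gives the other), and then transfers the invariance of $W$ to its asymptotic set via Proposition~\ref{p:omegainv}. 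Your route is shorter and leans only on the realm-commutation identity plus minimality, avoiding both the characterization of $\tilde\omega_\FF$ as the closure of the asymptotic set of its realm and the appeal to Proposition~\ref{p:omegainv}; the paper's route costs a little more but yields, as a by-product, a strongly $\sigma$-invariant comeager set whose asymptotic set has closure $\tilde\omega_\FF$, a representation that is reused later (e.g.\ in the comparison with the $\mu$-likely limit set, where the analogous argument shows the defining realms may be taken strongly $\sigma$-invariant). Either argument proves the proposition.
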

	\begin{proof}
		By definition, $\tilde\omega_\FF$ is closed.
		Let $U=\AN[\FF]{\tilde\omega_\FF}$.
		Since $\sigma$ is a homeomorphism, $\sigma^k(U)$ is also comeager for all $k\in\Z$.
		Then $W=\bigcap_{k\in\Z}\sigma^k(U)$ is still comeager, as an intersection of countably many comeager sets.
		One has $W\subseteq U$, so that $\cl{\omega_\FF (W)}\subseteq\cl{\omega_\FF(U)}=\tilde\omega_\FF$.
		Conversely, the definition of $\tilde\omega_\FF$ gives that it is included in $\cl{\omega_\FF (W)}$.
		Overall, $\cl{\omega_\FF(W)}=\tilde\omega_\FF$.
		Since $W$ is strongly $\sigma$-invariant, $\tilde\omega_\FF=\cl{\omega_\FF(W)}$ is also strongly $\sigma$-invariant, by Proposition~\ref{p:omegainv}.
	\end{proof}
	Moreover, Corollary~\ref{c:subattr} directly gives that $\tilde\omega_\FF$ is included in all subshift attractors.
	\begin{prop}\label{p:finv}
		Consider the CA $F$ in some direction $h$.
		Then $\tilde\omega_{F,h}$ is an $F$-invariant subshift.
	\end{prop}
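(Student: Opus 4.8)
The plan is to reduce the statement to the two results we already have. Applied to the sequence of CA $\FF=(F^t\sigma^{h(t)})_t$, Proposition~\ref{p:subsh} already tells us that $\tilde\omega_{F,h}$ is a subshift, so the only thing left to prove is $F$-invariance; recall that, for the uniform DS $F$, this means precisely $F(\tilde\omega_{F,h})\subseteq\tilde\omega_{F,h}$. On the other hand, Proposition~\ref{p:omegainv} says exactly that the asymptotic set of a strongly $\sigma$-invariant set is (fixed, hence) invariant under $F$. So the real point is to connect $\tilde\omega_{F,h}$ with the asymptotic set of some strongly $\sigma$-invariant set.

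Fortunately the proof of Proposition~\ref{p:subsh} does exactly this: it exhibits $\tilde\omega_{F,h}=\cl{\omega_{F,h}(W)}$, where $W=\bigcap_{k\in\Z}\sigma^k\bigl(\AN[F,h]{\tilde\omega_{F,h}}\bigr)$ is comeager and strongly $\sigma$-invariant. So first I would recall this identity. Then I would apply Proposition~\ref{p:omegainv} with $U=W$, obtaining $F(\omega_{F,h}(W))=\omega_{F,h}(W)$. It remains to pass from this equality of (possibly non-closed) sets to the corresponding statement for $\tilde\omega_{F,h}=\cl{\omega_{F,h}(W)}$: by continuity of $F$ one has $F(\cl{\omega_{F,h}(W)})\subseteq\cl{F(\omega_{F,h}(W))}=\cl{\omega_{F,h}(W)}=\tilde\omega_{F,h}$, which already yields $F$-invariance; and since $F(\tilde\omega_{F,h})$ is a continuous image of a compact set, it is closed and contains $F(\omega_{F,h}(W))=\omega_{F,h}(W)$, hence contains its closure $\tilde\omega_{F,h}$, so that in fact $F(\tilde\omega_{F,h})=\tilde\omega_{F,h}$.

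I do not expect a genuine obstacle here: this proposition is essentially a corollary of Propositions~\ref{p:subsh} and~\ref{p:omegainv}, the semi-uniform bookkeeping (writing $F^t\sigma^{h(t)}$ as a composition of the finitely many maps $F\sigma^{h(t)-h(t-1)}$) being already absorbed inside Proposition~\ref{p:omegainv}. The only points requiring a little care are to use the \emph{closure} of $\omega_{F,h}(W)$ consistently, since $\omega_{F,h}(W)$ itself need not be closed, and to remember that the relevant witness $W$ is not the realm of $\tilde\omega_{F,h}$ itself but its shift-closure, which is what makes it strongly $\sigma$-invariant and thus eligible for Proposition~\ref{p:omegainv}.
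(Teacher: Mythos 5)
Your proof is correct and is essentially the paper's own argument: the paper also deduces the result by applying Proposition~\ref{p:omegainv} to the identity $\tilde\omega_{F,h}=\cl{\omega_{F,h}(\AN[F,h]{\tilde\omega_{F,h}})}$ and passing to the closure. The only (harmless) difference is your detour through the shift-saturation $W$: since Proposition~\ref{p:subsh} already states that the realm itself is strongly $\sigma$-invariant, Proposition~\ref{p:omegainv} can be applied to $\AN[F,h]{\tilde\omega_{F,h}}$ directly, so your caution on that point is unnecessary.
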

	\begin{proof}
		We just apply Proposition~\ref{p:omegainv} to $\tilde\omega_{F,h}=\cl{\omega_{F,h}(\AN[F,h]{\tilde\omega_{F,h}})}$.
	\end{proof}
	
	\subsection{Indecomposability}
	Now we prove that the generic limit set of a cellular automaton is indecomposable in some sense.
	\begin{prop}\label{p:subsystems}
		Let $V=\bigsqcup^{n-1}_{i=0} V_i$, where $n\in\N$ and the $V_i$ are closed subsets which are invariant by some CA $F$ in some direction $h$, and, strongly, by $\sigma^p$, for some $p>0$.
		If $\AN[F,h]{V}$ has nonempty interior \resp{is not meager}, then there exists $i\in\co0n$ such that $\AN[F,h]{V_i}$ is dense \resp{comeager}.
	\end{prop}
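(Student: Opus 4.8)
The plan is to reduce this to the shift-invariant machinery already developed, in particular Lemma~\ref{l:comeager} applied to the uniform DS $\sigma$, together with Proposition~\ref{p:decom}. The subtlety is that the $V_i$ are only assumed invariant by $F$ in direction $h$ and \emph{strongly} $\sigma^p$-invariant (not strongly $\sigma$-invariant), so I cannot directly invoke the shift-transitivity results; I must first symmetrize over the $p$ shifts.

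First I would set $\FF=(F^t\sigma^{h(t)})_t$, which is a semi-uniform DS since it can be written as $F^t\sigma^{h(t)}=G_t\cdots G_1$ with $G_t=F\sigma^{h(t)-h(t-1)}$ ranging in a finite set (recall $h$ has bounded variation). Since the $V_i$ are pairwise disjoint closed sets invariant by every $G_t$, Proposition~\ref{p:decom} applies and gives $\AN[F,h]{V}=\bigsqcup_{i=0}^{n-1}\AN[F,h]{V_i}$. Now by Point~\ref{i:shiftinv} of Proposition~\ref{p:intnvdense}, $\sigma(\AN[F,h]{V_i})=\AN[F,h]{\sigma(V_i)}$; since $\sigma^p(V_i)=V_i$, the $p$ translates $\sigma^k(\AN[F,h]{V_i})$ for $0\le k<p$ are again realms of closed $G_t$-invariant sets, and $\bigsqcup_k \sigma^k(V_i)$ is strongly $\sigma$-invariant. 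So I would pass to $W_i=\AN[F,h]{\bigsqcup_{k=0}^{p-1}\sigma^k(V_i)}$, which by another application of Proposition~\ref{p:decom} (the translates are pairwise disjoint for distinct $i$, though possibly not for the same $i$ — so I should be slightly careful and instead just note $W_i\supseteq\AN[F,h]{V_i}$ and $W_i=\bigcup_{k}\sigma^k(\AN[F,h]{V_i})$ by Remark~\ref{r:union} and Point~\ref{i:shiftinv}). Each $W_i$ is strongly $\sigma$-invariant by construction, and if $V$ is closed then $W_i$ has the Baire property (it is a realm of a closed set, hence of the form $\bigcap_\varepsilon\FN{\B_\varepsilon(\cdot)}$ as in Remark~\ref{r:union}).

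Next, since $\AN[F,h]{V}=\bigsqcup_i\AN[F,h]{V_i}\subseteq\bigcup_i W_i$, the hypothesis that $\AN[F,h]{V}$ has nonempty interior \resp{is not meager} forces $\bigcup_i W_i$ to have nonempty interior \resp{be not meager}. Now I apply Point~\ref{i:coseq} of Lemma~\ref{l:comeager} to the uniform transitive DS $\sigma$ (the $W_i$ are strongly $\sigma$-invariant \resp{and have the Baire property}): there exists $i$ such that $W_i$ includes a dense open set \resp{is comeager}, hence in particular is dense \resp{not meager}. It remains to descend from $W_i=\bigcup_{k=0}^{p-1}\sigma^k(\AN[F,h]{V_i})$ back to $\AN[F,h]{V_i}$ itself. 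Since this is a finite union and each $\sigma^k$ is a homeomorphism, if $W_i$ is dense then some $\sigma^k(\AN[F,h]{V_i})$ is somewhere dense, hence (being strongly $\sigma$-invariant by Point~\ref{i:shiftinv}, applying Lemma~\ref{l:comeager} Point~\ref{i:densinu}) dense, hence $\AN[F,h]{V_i}=\sigma^{-k}(\sigma^k(\AN[F,h]{V_i}))$ is dense; similarly if $W_i$ is not meager then some translate is not meager, hence $\AN[F,h]{V_i}$ is not meager, and being the realm of a closed set it has the Baire property, so by Lemma~\ref{l:comeager} Point~\ref{i:meagerco} it is comeager.

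The main obstacle I anticipate is bookkeeping the mismatch between ``strongly $\sigma^p$-invariant'' and ``strongly $\sigma$-invariant'': the translates $\sigma^k(V_i)$ need not be disjoint from $V_i$, so Proposition~\ref{p:decom} cannot be applied naively to the enlarged family, and I must instead route through the union formula $\sigma^k(\AN[F,h]{V_i})=\AN[F,h]{\sigma^k(V_i)}$ and the inclusion $\AN[F,h]{V_i}\subseteq W_i$ rather than an equality. Everything else is a direct packaging of Proposition~\ref{p:decom}, Proposition~\ref{p:intnvdense}\ref{i:shiftinv}, and Lemma~\ref{l:comeager}.
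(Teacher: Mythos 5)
Your backbone --- Proposition~\ref{p:decom} to split the realm, then Point~\ref{i:coseq} of Lemma~\ref{l:comeager} for a transitive shift action --- is exactly the paper's two-line proof; but the symmetrization layer you insert is both unnecessary and the place where your write-up actually goes wrong. The misconception is in your opening sentence: you \emph{can} invoke the shift-transitivity results directly, because Lemma~\ref{l:comeager} is stated for an arbitrary transitive DS of homeomorphisms, and the uniform DS $\sigma^p$ is transitive on $A^\Z$. Since $\sigma^p(V_i)=V_i$, Point~\ref{i:shiftinv} of Proposition~\ref{p:intnvdense} gives $\sigma^p(\AN[F,h]{V_i})=\AN[F,h]{\sigma^p(V_i)}=\AN[F,h]{V_i}$, so the realms themselves are strongly $\sigma^p$-invariant (and, as realms of closed sets, have the Baire property); applying Point~\ref{i:coseq} to $\sigma^p$ with $W_i=\AN[F,h]{V_i}$ finishes the proof at once. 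This is what the paper does, and the application ``to the uniform DS $\sigma^p$'' is made explicit in the analogous step of the proof of Proposition~\ref{p:periodicorbit}.

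The concrete flaw in your detour is the descent step: you assert that $\sigma^k(\AN[F,h]{V_i})$ is ``strongly $\sigma$-invariant by Point~\ref{i:shiftinv}''. It is not: that point gives $\sigma(\sigma^k(\AN[F,h]{V_i}))=\AN[F,h]{\sigma^{k+1}(V_i)}$, which differs from $\AN[F,h]{\sigma^k(V_i)}$ unless $V_i$ is $\sigma$-invariant; only strong $\sigma^p$-invariance is available. Hence Points~\ref{i:densinu} and~\ref{i:meagerco} of Lemma~\ref{l:comeager} cannot be invoked for $\sigma$ there, and the only repair is to invoke them for $\sigma^p$ --- at which point the whole passage through $W_i=\bigcup_{k<p}\sigma^k(\AN[F,h]{V_i})$ collapses and you are back to the paper's argument. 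Two smaller remarks: the identity $\AN[F,h]{\bigsqcup_{k<p}\sigma^k(V_i)}=\bigcup_{k<p}\sigma^k(\AN[F,h]{V_i})$ is not given by Remark~\ref{r:union}, which only yields that the left-hand side contains the right-hand side, so if you keep the detour you should simply \emph{define} $W_i$ as the union of translates (the inclusion $\AN[F,h]{V_i}\subseteq W_i$, strong $\sigma$-invariance and the Baire property are all you need); and note that ``includes a dense open set'' from Point~\ref{i:coseq} is stronger than the required ``dense'', so that part of your conclusion is fine.
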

	\begin{proof}
		One has $\AN[F,h]{V}=\bigsqcup^{n-1}_{i=0}\AN[F,h]{V_i}$ by Proposition~\ref{p:decom}.
		\\
		By Point~\ref{i:coseq} of Lemma~\ref{l:comeager}, there exists $i\in\co0n$ such that $\AN[F,h]{V_i}$ is dense \resp{comeager}.
	\end{proof}
	\begin{crl}\label{c:indecomp}
		Let $F$ be a CA and $h$ a curve.
		$\tilde\omega_{F,h}$ cannot be decomposed as a disjoint union of non-trivial subshift subsystems (or even non-trivial strongly $\sigma^p$-invariant subsystems, for some $p>0$).
	\end{crl}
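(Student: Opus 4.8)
The plan is to derive the statement by contradiction from the minimality of the generic limit set, feeding the hypothetical decomposition into Proposition~\ref{p:subsystems}. Recall from Propositions~\ref{p:subsh} and~\ref{p:finv} that $\tilde\omega_{F,h}$ is itself an $F$-invariant subshift, so such a decomposition is at least conceivable a priori; suppose then that $\tilde\omega_{F,h}=\bigsqcup_{i=0}^{n-1}V_i$ with $n\ge 2$, where each $V_i$ is nonempty, closed, invariant under $F$ in direction $h$, and strongly invariant under $\sigma^{p}$ for a single $p>0$. In the subshift case this holds with $p=1$, every subshift being strongly $\sigma$-invariant; if the summands carry different periods $p_i$, I would first replace them by a common multiple $p=\mathrm{lcm}_i\,p_i$, under which each $V_i$ remains strongly invariant, so the reduction to one common period is free.

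First I would invoke Proposition~\ref{p:exist}: the realm $\AN[F,h]{\tilde\omega_{F,h}}$ is comeager, hence in particular not meager. Now the hypotheses of Proposition~\ref{p:subsystems} are exactly met by the family $(V_i)_i$ together with $V=\tilde\omega_{F,h}=\bigsqcup_i V_i$, so its ``non-meager'' alternative applies and produces an index $i\in\co0n$ with $\AN[F,h]{V_i}$ comeager.

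To conclude, observe that since $n\ge2$ and the $V_i$ are nonempty and pairwise disjoint, $V_i$ is a proper closed subset of $\tilde\omega_{F,h}$. But then $V_i$ is a closed set with comeager realm strictly smaller than $\tilde\omega_{F,h}$, contradicting the fact, established in (the proof of) Proposition~\ref{p:exist}, that $\tilde\omega_{F,h}$ is the \emph{smallest} closed set with comeager realm. This contradiction shows no such nontrivial decomposition exists, which is the claim.

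There is essentially no hard step here: the corollary is a repackaging of Proposition~\ref{p:subsystems} (whose real content, routed through Proposition~\ref{p:decom} and Lemma~\ref{l:comeager}, is that the realm of a disjoint $\sigma^p$-invariant union splits accordingly and that a strongly $\sigma$-invariant non-meager set with the Baire property must be comeager) together with the defining minimality property of $\tilde\omega_{F,h}$. The only points deserving a moment's attention are the bookkeeping that arranges a single period $p$ common to all summands, and the remark that ``non-trivial'' forces each $V_i$ to be nonempty and hence proper; neither presents a genuine obstacle.
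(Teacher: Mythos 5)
Your proof is correct and follows essentially the same route as the paper: feed the hypothetical decomposition into Proposition~\ref{p:subsystems} (using that the realm of $\tilde\omega_{F,h}$ is comeager by Proposition~\ref{p:exist}) to get some $V_i$ with comeager realm, then conclude by the minimality/definition of the generic limit set that $\tilde\omega_{F,h}\subseteq V_i$, so the decomposition is trivial. The only cosmetic differences are that you phrase the conclusion as a contradiction rather than as equality $V_i=\tilde\omega_{F,h}$, and you add the (harmless) reduction of possibly different periods $p_i$ to a common multiple.
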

	In other words, we can say that $\tilde\omega_{F,h}$ is connected, when considering the dynamical pseudo-metric related to the action of $(F,\sigma)$: $\tilde d(x,y)=\inf_{i,j\in\Z,s,t\in\N}d(F^s\sigma^i(x),F^t\sigma^j(y))$.
	\begin{proof}
		We assume that $\tilde\omega_{F,h}=\bigsqcup_{i=0}^{n-1}V_i$, where the $V_i$ are closed, invariant, strongly $\sigma^p$-invariant sets.
		By Proposition~\ref{p:subsystems}, there exists $i\in\co0n$ such that $\AN[F,h]{V_i}$ is comeager.
		By definition, $\tilde\omega_{F,h}$ is then included in $V_i$, and hence equal.
	\end{proof}
	
	\subsection{Finite generic limit set}
	If the generic limit set is finite, then the DS is almost equicontinuous.
	\begin{prop}\label{p:sensinfini}
		Let $\FF$ be a semi-uniform DS and $V=\omega(U)$ be finite, for some set $U$ which is comeager in some nonempty open subset $W\subseteq X$. Then $\E_\FF$ is comeager in $W$.
	\end{prop}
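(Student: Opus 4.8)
The plan is to feed the finiteness of $V$ into Proposition~\ref{p:fini} to split $U$ into finitely many asymptotic classes, to use Remark~\ref{r:asymeq} to turn each such class into equicontinuity points wherever it is large, and finally to reassemble these \emph{local} portions of $\E_\FF$ into a comeager subset of $W$ by a Baire-category patching.

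First I would observe that $U\subseteq\AN[\FF]{V}$: for every $x\in U$ one has $\omega_\FF(x)=\Omega_\FF(\{x\})\subseteq\omega_\FF(U)=V$. As $V$ is finite and $\FF$ is semi-uniform, Proposition~\ref{p:fini} yields points $y_1,\dots,y_m$ with $m\le\card V$ such that $\AN[\FF]{V}$, hence also $U$, is contained in $\bigcup_{j=1}^{m}\asym[\FF]{y_j}$. Each asymptotic class $\asym[\FF]{y_j}=\AN[\FF]{(\{F_t(y_j)\})_t}=\bigcap_{n>0}\bigcup_{T\in\N}\bigcap_{t\ge T}F_t^{-1}(\cl\B_{1/n}(F_t(y_j)))$ is a Borel set, so it has the Baire property; I would therefore fix, for each $j$, an open set $O_j$ and a meager set $M_j$ with $\asym[\FF]{y_j}=O_j\,\Delta\,M_j$.

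The crux is to prove that $G:=W\cap\bigcup_{j=1}^{m}O_j$ is dense in $W$. Given a nonempty open $W'\subseteq W$, the set $U$ is comeager, in particular non-meager, in $W'$ (which, being open in the compact metric space $X$, is a Baire space). Since $U\cap W'$ is covered by the finitely many sets $\asym[\FF]{y_j}\cap W'$ and a finite union of meager sets is meager, at least one of them, say $\asym[\FF]{y_{j_0}}\cap W'$, is non-meager in $W'$; this forces $O_{j_0}\cap W'\ne\emptyset$, since otherwise $\asym[\FF]{y_{j_0}}\cap W'\subseteq M_{j_0}$ would be meager. Hence $G$ meets every nonempty open subset of $W$, and as it is open, $G$ is dense open in $W$.

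To conclude, on every nonempty open piece $W\cap O_j$ the set $\asym[\FF]{y_j}$ differs from $W\cap O_j$ only by the meager set $M_j\cap W\cap O_j$, hence is comeager in $W\cap O_j$; by Remark~\ref{r:asymeq}, $\E_\FF$ is then comeager in $W\cap O_j$. Being a finite union of such open sets, $G$ satisfies that $\E_\FF$ is comeager in $G$; and since $G$ is dense open in $W$, this gives that $\E_\FF$ is comeager in $W$, as desired. I expect the last patching step to be the delicate part: being non-meager in each small open subset is not in itself enough to be comeager, which is exactly why I route the argument through the explicit decompositions $\asym[\FF]{y_j}=O_j\,\Delta\,M_j$, on which the \emph{comeagerness} of $\asym[\FF]{y_j}$ in the relevant open pieces is manifest.
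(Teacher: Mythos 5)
Your proof is correct and follows essentially the same route as the paper: Proposition~\ref{p:fini} to cover $U$ by finitely many asymptotic classes, then Remark~\ref{r:asymeq} to convert largeness of a class into largeness of $\E_\FF$, localized over open subsets of $W$. The only difference is bookkeeping at the end: the paper applies the non-meager version of Remark~\ref{r:asymeq} on each nonempty open $W'\subseteq W$ and concludes directly with Remark~\ref{r:Borel}, whereas you reach the same conclusion more laboriously via the Baire-property decompositions $O_j\,\Delta\,M_j$ and a dense open patching set $G$.
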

	This of course implies that $\E_\FF\cap U$ is comeager in $W$.
	
	Before proving the proposition, we immediately deduce the following.
	\begin{crl}\label{c:sensinfini}~\begin{itemize}
			\item If $\tilde\omega_\FF$ is finite, then $\FF$ is almost equicontinuous.
			\item If $\FF$ has no equicontinuous point, then the asymptotic (and limit) sets of all non-meager sets are infinite.
		\end{itemize}
	\end{crl}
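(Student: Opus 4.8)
The plan is to obtain both items as essentially immediate consequences of Proposition~\ref{p:sensinfini}; accordingly I take $\FF$ to be a semi-uniform DS throughout, consistently with that statement, and I use as further inputs Proposition~\ref{p:exist} (the realm of $\tilde\omega_\FF$ is comeager) and the description of realms in Remark~\ref{r:union}. For the first item, suppose $\tilde\omega_\FF$ is finite; I would apply Proposition~\ref{p:sensinfini} with $W=X$ and $U=\AN[\FF]{\tilde\omega_\FF}$. This $U$ is comeager by Proposition~\ref{p:exist}, hence comeager in the nonempty open set $X$, and every $x\in U$ satisfies $\omega_\FF(x)\subseteq\tilde\omega_\FF$ by the very definition of the realm, so $\omega_\FF(U)\subseteq\tilde\omega_\FF$ is finite. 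Proposition~\ref{p:sensinfini} then yields that $\E_\FF$ is comeager in $X$, that is, $\FF$ is almost equicontinuous.

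For the second item I would argue by contraposition: assume some non-meager set $U$ has $V:=\omega_\FF(U)$ finite, and produce an equicontinuity point. Since $\omega_\FF(U)=\bigcup_{x\in U}\omega_\FF(x)$, every $x\in U$ has $\omega_\FF(x)\subseteq V$, hence $U\subseteq\AN[\FF]V$. As $V$ is finite, hence closed, Point~\ref{i:app} of Remark~\ref{r:union} writes $\AN[\FF]V=\bigcap_{\varepsilon>0}\FN[\FF]{\B_\varepsilon(V)}$; this intersection may be restricted to $\varepsilon=1/n$, and each $\FN[\FF]{\B_{1/n}(V)}=\bigcup_{T}\bigcap_{t\ge T}F_t^{-1}(\B_{1/n}(V))$ is Borel, so $\AN[\FF]V$ has the Baire property. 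Being a superset of the non-meager $U$, it is non-meager, hence comeager in some nonempty open set $W$. Since moreover $\omega_\FF(\AN[\FF]V)\subseteq V$ is finite, Proposition~\ref{p:sensinfini} applies to $(\AN[\FF]V,W)$ and makes $\E_\FF$ comeager in $W$, in particular nonempty --- contradicting the hypothesis. The claim about limit sets follows at once, since $\omega_\FF(U)=\bigcup_{x\in U}\Omega_\FF(\{x\})\subseteq\Omega_\FF(U)$, so $\Omega_\FF(U)$ is infinite whenever $\omega_\FF(U)$ is.

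The only point I expect to require genuine care is the standard measure--category fact invoked above: a set with the Baire property that is non-meager is comeager in some nonempty open set (write it as $O\triangle M$ with $O$ open and $M$ meager; non-meagerness forces $O\neq\emptyset$, and then $O\setminus M$ witnesses comeagerness in $O$). Everything else is bookkeeping with the definitions. As an alternative route avoiding even this fact, the second item can be obtained directly from Proposition~\ref{p:fini}: $U\subseteq\AN[\FF]V$ is covered by at most $\card V$ asymptotic classes, one of which must be non-meager, hence non-meager in some nonempty open set, and Remark~\ref{r:asymeq} then produces an equicontinuity point there.
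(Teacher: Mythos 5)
Your proposal is correct and follows exactly the route the paper intends: the corollary is stated as an immediate consequence of Proposition~\ref{p:sensinfini}, and you simply spell out the deduction (first item via $U=\AN[\FF]{\tilde\omega_\FF}$ comeager by Proposition~\ref{p:exist} with $W=X$; second item by contraposition, enlarging the non-meager set to the realm $\AN[\FF]{\omega_\FF(U)}$, which is Borel hence has the Baire property and is therefore comeager in some nonempty open set). The only detail the paper leaves implicit --- passing from ``non-meager'' to ``comeager in some nonempty open set'' via the Baire property, or alternatively via Proposition~\ref{p:fini} and Remark~\ref{r:asymeq} --- is handled correctly in your write-up, so there is nothing to add.
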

	\begin{proof}[Proof of Proposition~\ref{p:sensinfini}]
		According to Proposition~\ref{p:fini}, {$U\subseteq\AN[\FF]V\subseteq\bigsqcup_{y\in I}\asym[\FF]{y}$ } for some finite set $I$.
		If $U$ is comeager in $W$, then so is this union.
		Let $W'\subseteq W$ be nonempty and open.
		Since a finite union of sets meager in $W'$ is meager in $W'$, we deduce that $\asym[\FF]y$ is not meager in $W'$, for some $y\in I$.
		Remark~\ref{r:asymeq} says that $\E_\FF$ is then not meager in $W'$ either.
		We conclude thanks to Remark~\ref{r:Borel}.
	\end{proof}
	
	In the case of cellular automata with finite generic limit set, we can say more.
	\begin{prop}\label{p:periodicorbit}
		Let $F$ be a CA and $h$ a curve, such that $\tilde\omega_{F,h}$ is finite.
		Then $\tilde\omega_{F,h}$ contains one single (periodic) orbit, of a monochrome configuration $y$, and $\asym[F,h]{y}$ is comeager.
	\end{prop}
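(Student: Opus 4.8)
The plan is to combine the fact that $\tilde\omega_{F,h}$ is an $F$-invariant subshift (Proposition~\ref{p:finv}) with the structural results about finite realms. First I would note that by Proposition~\ref{p:sensinfini} (applied with $U=W=\AN[F,h]{\tilde\omega_{F,h}}=X$, or rather its comeager realm), $F$ is almost equicontinuous in direction $h$. Then I would invoke Corollary~\ref{c:fini}: since $V=\tilde\omega_{F,h}$ is finite and equal to $\omega_{F,h}(\AN[F,h]V)$ with a strongly $\sigma$-invariant realm (by Proposition~\ref{p:subsh}), $F$ induces a self-bijection of $V$ and $\AN[F,h]V=\bigsqcup_{y\in V}\asym[F,h]y$. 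By Corollary~\ref{c:indecomp}, $\tilde\omega_{F,h}$ cannot be decomposed into disjoint non-trivial subsystems; since the orbits under $\langle F,\sigma\rangle$ of its (finitely many) elements partition it into subsystems, there must be only one such orbit. This gives that $\tilde\omega_{F,h}$ consists of a single $\langle F,\sigma\rangle$-orbit; since it is finite, this orbit is $F$-periodic.

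The next task is to show that this single orbit is the orbit of a \emph{monochrome} configuration. Here the key point is that $\tilde\omega_{F,h}$ is a \emph{subshift} (Proposition~\ref{p:subsh}): a finite subshift on which $\sigma$ acts is a finite union of periodic configurations, and by the indecomposability just established it is the $\sigma$-orbit (together with $F$-iterates) of a single periodic configuration $\dinf u$. I would then argue that, being a single $\sigma$-orbit that is finite, the configuration must in fact be $\sigma$-invariant, i.e.\ monochrome: if $\dinf u\in\tilde\omega_{F,h}$ with $\len u=p$ minimal and $p>1$, then $\dinf u,\sigma(\dinf u),\dots,\sigma^{p-1}(\dinf u)$ are $p$ distinct configurations, but they together with all $F$-iterates form $\tilde\omega_{F,h}$. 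Since $F$ commutes with $\sigma$ and induces a bijection of the finite set $\tilde\omega_{F,h}$, the $F$-dynamics permutes the $p$ shifted copies; one shows the asymptotic realm $\asym[F,h]{\dinf u}$ and its shifts $\asym[F,h]{\sigma^k(\dinf u)}=\sigma^k(\asym[F,h]{\dinf u})$ (using Point~\ref{i:shiftinv} of Proposition~\ref{p:intnvdense}) would then give a $\sigma$-equivariant partition of the comeager realm into $p$ pieces, contradicting that the realm is strongly $\sigma$-invariant and that each piece $\asym[F,h]{\dinf u}$ is then forced to be empty or the whole orbit's realm. More cleanly: since $\tilde\omega_{F,h}$ is strongly $\sigma$-invariant and a single orbit, $\sigma$ restricted to it is a bijection of a finite set, so $\sigma^N=\id$ on it for some $N$, forcing every element to be spatially periodic; indecomposability under $\sigma^p$-invariant pieces (Corollary~\ref{c:indecomp}) with $p$ the common period then forces the period to be $1$, i.e.\ monochrome.

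Finally, once $\tilde\omega_{F,h}=\orb_F(y)$ for a monochrome $y$, the claim that $\asym[F,h]y$ is comeager follows from Corollary~\ref{c:fini}: the comeager realm $\AN[F,h]{\tilde\omega_{F,h}}$ decomposes as $\bigsqcup_{z\in\orb_F(y)}\asym[F,h]z$; these finitely many asymptotic classes are strongly $\sigma$-invariant images of one another under $F$ (hence homeomorphic), and by Point~\ref{i:coseq} of Lemma~\ref{l:comeager} one of them — hence all of them, by shift-transitivity and the $F$-bijection — must be comeager; in particular $\asym[F,h]y$ is.

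\medskip

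The main obstacle I anticipate is the monochrome step: carefully ruling out a genuinely non-trivial spatial period requires combining the subshift structure of $\tilde\omega_{F,h}$ with the indecomposability corollary in exactly the right way, since a priori a finite subshift that is $F$-periodic could be $\{\dinf{u}, \sigma(\dinf u),\ldots\}$ with $p>1$; the resolution is that Corollary~\ref{c:indecomp} explicitly allows decomposition into $\sigma^p$-invariant (not just $\sigma$-invariant) subsystems, and the $p$ shifted copies of a period-$p$ configuration are precisely such a decomposition, forcing $p=1$. I would make sure the statement of Corollary~\ref{c:indecomp} is invoked with this $p$, which is the crux.
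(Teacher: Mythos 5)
Your overall toolbox (Proposition~\ref{p:subsh}/\ref{p:finv}, Corollary~\ref{c:fini}, Point~\ref{i:coseq} of Lemma~\ref{l:comeager}) is the right one, and the reduction to a single $(F,\sigma)$-orbit via Corollary~\ref{c:indecomp} is fine. But the step you yourself single out as the crux --- forcing the spatial period to be $1$ by applying Corollary~\ref{c:indecomp} to the $p$ shifted copies of a period-$p$ configuration --- does not work. Corollary~\ref{c:indecomp} (via Proposition~\ref{p:subsystems} and Proposition~\ref{p:decom}) requires the pieces to be \emph{$F$-invariant in direction $h$} as well as strongly $\sigma^p$-invariant. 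If $\tilde\omega_{F,h}$ were, say, $\{\dinf{(01)},\dinf{(10)}\}$ with $F$ acting on it like $\sigma$ (swapping the two configurations), then the candidate pieces $\{\dinf{(01)}\}$ and $\{\dinf{(10)}\}$ are strongly $\sigma^2$-invariant but not $F$-invariant, and since $F$ acts transitively on the copies there is \emph{no} nontrivial decomposition into $F$-invariant pieces at all; indecomposability therefore yields no contradiction, and your argument does not exclude precisely the situation the proposition is meant to exclude. A second, smaller, error: your final step asserts that all the asymptotic classes $\asym[F,h]z$, $z\in\orb_F(y)$, are comeager; distinct asymptotic classes are disjoint, and two comeager sets always intersect, so at most one class can be comeager --- this is exactly the point of the remark following the proposition in the paper (generic configurations are asymptotic to a \emph{single} configuration of the orbit).

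The paper's proof gets monochromaticity \emph{from} comeagerness rather than before it, and your first, vaguer sketch was actually pointing in that direction. Decompose $\AN[F,h]{\tilde\omega_{F,h}}=\bigsqcup_{z\in\tilde\omega_{F,h}}\asym[F,h]z$ by Corollary~\ref{c:fini}; each class is strongly $\sigma^p$-invariant ($p$ a common spatial period), so Point~\ref{i:coseq} of Lemma~\ref{l:comeager} applied to the uniform DS $\sigma^p$ gives one $y$ with $\asym[F,h]y$ comeager. Minimality of the generic limit set then already gives $\tilde\omega_{F,h}=\orb_{F,h}(y)$ (no need for Corollary~\ref{c:indecomp}). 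Since $\sigma$ is an automorphism of the dynamics, $\sigma(\asym[F,h]y)=\asym[F,h]{\sigma(y)}$ is also comeager, hence meets $\asym[F,h]y$; by transitivity of asymptoticity $y$ is asymptotic to $\sigma(y)$, and since both lie in the finite set $\tilde\omega_{F,h}$ on which $F$ (and $\sigma$) act bijectively, Remark~\ref{r:fini} forces $y=\sigma(y)$, i.e.\ $y$ is monochrome. So your proposal needs this reordering and replacement of the indecomposability argument at the crux; as written it has a genuine gap there.
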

	Note that the orbit of this monochrome configuration may be nontrivial (see Example~\ref{e:martin}), but still generic configurations are all asymptotic to a single configuration of that orbit (and not to the others). This could seem paradoxical, since it contrasts with the usual, uniform and synchronous aspect of dynamics of CA over the full set $A^\Z$, but here the genericity notion is not at all $F$-invariant.
	\begin{proof} 
		
		By Proposition~\ref{p:finv}, $\tilde\omega_{F,h}$ is a finite $F$-invariant subshift, so that all configurations are periodic (for the shift). Let $p>0$ be a common period: 
		$\tilde\omega_{F,h}$ can be decomposed as $\bigsqcup_{y\in V}\orb_{F,h}(y)$, where $V\subseteq\tilde\omega_{F,h}$ is a set of orbit representatives. 
		We can apply Corollary~\ref{c:fini}: $\AN[F,h]{\tilde\omega_{F,h}}=\bigsqcup_{y\in\tilde\omega_{F,h}}\asym[F,h]y$. 
		Since every $y\in\tilde\omega_{F,h}$ is strongly $\sigma^p$-invariant, we can apply Point~\ref{i:coseq} of Lemma~\ref{l:comeager} (to the uniform DS $\sigma^p$), and get that there is 
		$y\in\tilde\omega_{F,h}$ such that $\asym[F,h]y$ 
		is comeager.
		Since $\AN[F,h]{\orb_{F,h}(y)}\supseteq\asym[F,h]y$ is comeager, we get that the closed $\orb_{F,h}(y)$ is actually $\tilde\omega_{F,h}$.
		Moreover, since $\sigma$ is an automorphism of $F$, $\sigma(\asym[F,h]y)=\asym[F,h]{\sigma(y)}$ 
		is also comeager.
		Then $\asym[F,h]y\cap\asym[F,h]{\sigma(y)}$ is also comeager, and in particular nonempty.
		By transitivity of the asymptoticity relation, $y$ is asymptotic to $\sigma(y)$.
		Since they both lie in the bijective subsystem of $F$ induced over the finite $\tilde\omega_{F,h}$, Remark~\ref{r:fini} gives that $y=\sigma(y)$, which means that $y$ is monochrome.
		%
	\end{proof}
	
	\subsection{Asymptotic set of equicontinuous points}
	We shall show that if the system is almost equicontinuous, then its generic limit set is exactly the closure of the asymptotic set of its set of equicontinuous points.
	
	The following proposition and its corollary show that the set of equicontinuity points is included in all dense realms.
	\begin{prop}\label{p:eqan}
		Let $\FF$ be a DS and $(V_t)_{t\in\N}$ a sequence of {closed} subsets of $X$.
		Then $\E_\FF\cap \cl{\AN[\FF]{(V_t)_t}}\subseteq \AN[\FF]{(V_t)_t}$.
	\end{prop}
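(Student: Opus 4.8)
The plan is to run a direct $\varepsilon$--$\delta$ argument, exploiting the fact that equicontinuity at $x$ provides a modulus of continuity that is \emph{uniform in time}. Fix a point $x\in\E_\FF\cap\cl{\AN[\FF]{(V_t)_t}}$ and some $\varepsilon>0$. I aim to show that $d(F_t(x),V_t)<2\varepsilon$ for all sufficiently large $t$; since $\varepsilon>0$ is arbitrary, this gives $\lim_{t\to\infty}d(F_t(x),V_t)=0$, i.e.\ $x\in\AN[\FF]{(V_t)_t}$, which is exactly the desired inclusion.

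First I would use that $x$ is $\varepsilon$-stable (it belongs to $\E_\FF$): there is $\delta>0$ such that $d(F_t(x),F_t(y))<\varepsilon$ for every $y\in\B_\delta(x)$ and every $t\in\N$. Next, since $x$ lies in $\cl{\AN[\FF]{(V_t)_t}}$, the open ball $\B_\delta(x)$ meets $\AN[\FF]{(V_t)_t}$; choose such a point $y\in\B_\delta(x)\cap\AN[\FF]{(V_t)_t}$. By definition of the realm associated with the sequence $(V_t)_t$, we have $\lim_{t\to\infty}d(F_t(y),V_t)=0$, so there exists $T\in\N$ with $d(F_t(y),V_t)<\varepsilon$ for every $t\ge T$.

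Finally I combine these two facts using the triangular inequality for the distance-to-a-set function: for every $t\ge T$,
\[d(F_t(x),V_t)\le d(F_t(x),F_t(y))+d(F_t(y),V_t)<\varepsilon+\varepsilon=2\varepsilon.\]
As $\varepsilon>0$ was arbitrary, this yields $\lim_{t\to\infty}d(F_t(x),V_t)=0$, hence $x\in\AN[\FF]{(V_t)_t}$, proving $\E_\FF\cap\cl{\AN[\FF]{(V_t)_t}}\subseteq\AN[\FF]{(V_t)_t}$.

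There is essentially no hard step here; the single point requiring care is that the $\delta$ obtained from $\varepsilon$-stability must be independent of $t$ — which is precisely what equicontinuity at $x$ guarantees — so that one well-chosen neighbour $y$ works simultaneously for all time steps. (Closedness of the $V_t$ is not actually used in the argument; it is only assumed for consistency with the definition of $\AN[\FF]{(\cdot)_t}$ used elsewhere in the paper.)
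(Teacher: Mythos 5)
Your argument is correct and is essentially the paper's own proof: equicontinuity of $x$ gives a time-uniform $\delta$, density gives a neighbour $y\in\B_\delta(x)\cap\AN[\FF]{(V_t)_t}$, and the triangle inequality transfers the convergence $d(F_t(y),V_t)\to0$ to $x$ (the paper simply uses $\varepsilon/2$ in each term to land on $\varepsilon$ rather than $2\varepsilon$). Your closing observation that closedness of the $V_t$ plays no role here is also accurate.
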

	\begin{proof}
		Let $x\in \E_\FF\cap \cl{\AN[\FF]{(V_t)_t}}$, and $\varepsilon>0$.
		Since $x\in \E_\FF$, there exists $\delta>0$ such that 
		\[\forall y\in \B_{\delta}(x), \forall t\in\N, d(F_t(x),F_t(y))<\varepsilon/2~.\]
		Since $x\in\cl{\AN[\FF]{(V_t)_t}}$, there exists $y\in\B_{\delta}(x)\cap\AN[\FF]{(V_t)_t}$. Hence,
		\[\exists T\in \N,  \forall t> T, d(F_t(y), V_t) < \varepsilon/2~.\] 
		For this $T$, $\forall t> T,  d(F_t(x), V_t)<\varepsilon$.
		Since this is true for every $\varepsilon>0$, we get that $x\in\AN[\FF]{(V_t)_t}$.
	\end{proof}
	In particular, for 
	$z\in X$, we have $\asym[\FF]z\supseteq\cl{\asym[\FF]z}\cap\E_\FF$.
	
	\begin{rmq}
		In \cite[Prop~2.74]{kurka}, it is proved that, in a uniform DS, if an attractor is included in the set of equicontinuous points, so is its realm. 
		In particular, we can deduce from Proposition~\ref{p:eqan} the following result: the realm of any 
		subshift attractor {included in the set of equicontinuous points,} is exactly the set of equicontinuity points.
	\end{rmq}

	For almost equicontinuous DS, Proposition~\ref{p:eqan} means that it is enough to prove that some realm is dense to prove that it is comeager.	
	\begin{crl}\label{c:aleq}
		If $\FF$ is an almost equicontinuous DS, then $\tilde\omega_\FF=\cl{\omega_\FF(\E_\FF)}$.
	\end{crl}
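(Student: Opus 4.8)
The plan is to prove the two inclusions separately, using Proposition~\ref{p:eqan} for the nontrivial direction. First, the inclusion $\tilde\omega_\FF\subseteq\cl{\omega_\FF(\E_\FF)}$: I would check that $\cl{\omega_\FF(\E_\FF)}$ is a closed set with comeager realm, so that the minimality definition of $\tilde\omega_\FF$ forces the inclusion. The set is closed by construction. For its realm, note that by definition $\E_\FF\subseteq\AN[\FF]{\omega_\FF(\E_\FF)}$ (each equicontinuous point $x$ satisfies $\omega_\FF(x)\subseteq\omega_\FF(\E_\FF)$, trivially), hence $\E_\FF\subseteq\AN[\FF]{\cl{\omega_\FF(\E_\FF)}}$; since $\FF$ is almost equicontinuous, $\E_\FF$ is comeager, and therefore so is this larger realm.

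For the reverse inclusion $\cl{\omega_\FF(\E_\FF)}\subseteq\tilde\omega_\FF$, I would first observe it suffices to show $\omega_\FF(\E_\FF)\subseteq\tilde\omega_\FF$, since $\tilde\omega_\FF$ is closed. Take $x\in\E_\FF$; I want $\Omega_\FF(\{x\})\subseteq\tilde\omega_\FF$, i.e.\ $x\in\AN[\FF]{\tilde\omega_\FF}$. The realm $\AN[\FF]{\tilde\omega_\FF}$ is comeager by Proposition~\ref{p:exist}, hence in particular dense, so $\cl{\AN[\FF]{\tilde\omega_\FF}}=X\ni x$. Now apply Proposition~\ref{p:eqan} with the constant sequence $V_t=\tilde\omega_\FF$ (which is closed): since $x\in\E_\FF\cap\cl{\AN[\FF]{\tilde\omega_\FF}}$, we conclude $x\in\AN[\FF]{\tilde\omega_\FF}$, that is $\omega_\FF(x)\subseteq\tilde\omega_\FF$. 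As this holds for every $x\in\E_\FF$, we get $\omega_\FF(\E_\FF)\subseteq\tilde\omega_\FF$, and taking closures finishes the argument.

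The proof is essentially a two-line deduction from Propositions~\ref{p:exist} and~\ref{p:eqan}; there is no real obstacle. The only point demanding a moment of care is matching the hypothesis of Proposition~\ref{p:eqan}: one must use that $\AN[\FF]{\tilde\omega_\FF}$ is dense (which comes from it being comeager, via Proposition~\ref{p:exist} and the Baire category theorem) so that its closure is all of $X$ and thus contains the arbitrary equicontinuity point $x$. Everything else — closedness of $\cl{\omega_\FF(\E_\FF)}$, the trivial inclusion $\E_\FF\subseteq\AN[\FF]{\omega_\FF(\E_\FF)}$, and the use of almost equicontinuity to get comeagerness — is routine.
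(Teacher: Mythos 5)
Your proposal is correct and follows essentially the same route as the paper: the inclusion $\cl{\omega_\FF(\E_\FF)}\subseteq\tilde\omega_\FF$ via Proposition~\ref{p:eqan} applied to the dense (comeager, by Proposition~\ref{p:exist}) realm of $\tilde\omega_\FF$, and the reverse inclusion by minimality, since $\cl{\omega_\FF(\E_\FF)}$ is a closed set whose realm contains the comeager $\E_\FF$. The only difference is that you spell out the routine details (constant sequence $V_t=\tilde\omega_\FF$, closure of a dense set being $X$) that the paper leaves implicit.
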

	\begin{proof}
		Since $\AN[\FF]{\tilde\omega_\FF}$ is dense, $\E_\FF\subseteq \AN[\FF]{\tilde\omega_\FF}$ by Proposition~\ref{p:eqan}. 
		Hence, $\omega_\FF(\E_\FF)\subseteq\tilde\omega_\FF$. Since $\tilde\omega_\FF$ is closed, $\cl{\omega_\FF(\E_\FF)}\subseteq\tilde\omega_\FF$.
		Conversely, $\tilde\omega_\FF$ is the intersection of all closed subsets with comeager realms, among which $\cl{\omega_\FF(\E_\FF)}$ (whose realm includes the comeager $\E_\FF$).
		So, $\tilde\omega_\FF=\cl{\omega_\FF(\E_\FF)}$.
	\end{proof}
	
	If the system is equicontinuous, then its generic limit set is its limit set.
	\begin{prop}\label{p:equi}
		If $\FF$ is an equicontinuous DS over space $X$, then $\tilde\omega_\FF=\cl{\omega_\FF}=\Omega_\FF$.
	\end{prop}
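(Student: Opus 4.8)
The plan is to combine two observations: one inclusion chain comes for free, and the reverse direction splits into an application of Corollary~\ref{c:aleq} together with a short compactness argument. Recall from the proof of Proposition~\ref{p:snwand} that $\tilde\omega_\FF\subseteq\cl{\omega_\FF}\subseteq\Omega_\FF$ for any DS; hence it is enough to establish both $\tilde\omega_\FF=\cl{\omega_\FF}$ and $\Omega_\FF\subseteq\cl{\omega_\FF}$, since the three sets are then sandwiched. The first equality is immediate: an equicontinuous DS is in particular almost equicontinuous, with $\E_\FF=X$, so Corollary~\ref{c:aleq} gives $\tilde\omega_\FF=\cl{\omega_\FF(\E_\FF)}=\cl{\omega_\FF(X)}=\cl{\omega_\FF}$.

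It remains to prove $\Omega_\FF\subseteq\cl{\omega_\FF}$. I would take $y\in\Omega_\FF=\Omega_\FF(X)$, so that there are times $t_k\to\infty$ and points $x_k\in X$ with $F_{t_k}(x_k)\to y$. By compactness of $X$, after passing to a subsequence one may assume $x_k\to x$ for some $x\in X$. Now fix $\varepsilon>0$ and use equicontinuity in the uniform form (available by compactness, as noted just after the definition of equicontinuity): there is $\delta>0$ with $d(a,b)<\delta\implies d(F_t(a),F_t(b))<\varepsilon$ for every $t\in\N$. For $k$ large we have $d(x_k,x)<\delta$, hence $d(F_{t_k}(x_k),F_{t_k}(x))<\varepsilon$; as $\varepsilon$ was arbitrary, $d(F_{t_k}(x_k),F_{t_k}(x))\to0$, so $F_{t_k}(x)\to y$ as well. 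Since $t_k\to\infty$, this exhibits $y$ as a limit point along the orbit of $x$, i.e. $y\in\Omega_\FF(\{x\})\subseteq\omega_\FF\subseteq\cl{\omega_\FF}$. Together with the first equality, this gives $\tilde\omega_\FF=\cl{\omega_\FF}=\Omega_\FF$.

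I do not expect a real obstacle here; the only point needing care is to invoke the uniform version of equicontinuity ($\delta$ independent of the base point and of time) in order to absorb the perturbation $x_k\to x$ — the naive pointwise statement would not be enough. An alternative route, slightly longer, would bypass Corollary~\ref{c:aleq} and instead check directly that an equicontinuous DS is semi-nonwandering (given an open $U$ meeting $\Omega_\FF$, pick $x$ with $\Omega_\FF(\{x\})\cap U\neq\emptyset$ and use the same uniform $\delta$ to push an entire ball $\B_\delta(x)$ into the set of points whose orbit visits $U$ infinitely often, so that this set is non-meager), and then apply Proposition~\ref{p:snwand}.
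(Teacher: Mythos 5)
Your proof is correct and takes essentially the same route as the paper: both reduce, via Corollary~\ref{c:aleq} applied with $\E_\FF=X$, to showing that $\Omega_\FF$ is contained in $\cl{\omega_\FF}$, using uniform equicontinuity together with compactness of $X$. The only difference is in how compactness is exploited — the paper covers $X$ by a finite $\delta$-net and applies the pigeon-hole principle, obtaining density of $\omega_\FF$ in $\Omega_\FF$, whereas you extract a convergent subsequence of the points $x_k$, which in fact yields the slightly stronger inclusion $\Omega_\FF\subseteq\omega_\FF$.
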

	\begin{proof}
		Let $y\in\Omega_\FF$ and $\varepsilon>0$.
		We will show that $\B_\varepsilon(y)$ intersects $\omega_\FF$.
		There exists $\delta$ such that for every $x\in \E_\FF=X$ and every $t\in\N$, $F_t(\B_\delta(x))\subseteq\B_{\varepsilon/2}(F_t(x))$.
		By compactness of $X$, there exists a finite $I\subseteq X$ such that $X=\bigcup_{x\in I}\B_\delta(x)$.
		Since $y\in\Omega_\FF$, there is an infinite $J\subseteq\N$, and for all $t\in J$, some $x_t\in X$ such that $F_t(x_t)\in\B_{\varepsilon/2}(y)$.
		By the pigeon-hole principle, there exists $x\in I$ such that $\B_\delta(x)$ contains infinitely many $x_t$ with $t\in J$.
		This means that for infinitely many $t$, $d(F_t(x),y)\le d(F_t(x),F_t(x_t))+d(F_t(x_t),y)\le\varepsilon/2+\varepsilon/2$.
		We conclude that the orbit of $x$ has a limit point $z\in\omega_\FF(x)\cap\B_\varepsilon(y)$.
		This proves that $\omega_\FF(X)$ is dense in $\Omega_\FF$; by Corollary~\ref{c:aleq}, we obtain $\tilde\omega_\FF=\cl{\omega_\FF(X)} =\Omega_\FF$.
	\end{proof}
	Another remark: it is known that a cellular automaton is nilpotent if and only if its limit set is finite. Hence, it is nilpotent if and only if it is equicontinuous and its generic limit set is finite.
	
	\section{Directional dynamics}\label{s:directional}
	In this section, we study cellular automata while varying the directions.
	
	We have already seen in Remark~\ref{r:eqdir} that the equicontinuity properties are preserved by $\sim$.
	This is also the case for asymptotic sets, as stated in the following remark; the limit set and direct realm are even direction-invariant, provided that the considered set is strongly shift-invariant. 
	\begin{rmq}\label{r:omega}
		Let $F$ be a CA, $V,U\subseteq A^\Z$ be strongly $\sigma$-invariant, {$V_t\subseteq A^\Z$ be closed and strongly $\sigma$-invariant,} for $t\in\N$
		and $h,h'\in\BF$.
		\begin{enumerate}
			\item $\FN[F,h]{(V_t)_t}=\FN[F,h']{(V_t)_t}$.
			\item $\Omega_{F,h}(U)=\Omega_{F,h'}(U)$.
			\item If $h\sim h'$, then $\omega_{F,h}(U)=\omega_{F,h'}(U)$.
			\item If $h\sim h'$, then $\AN[F,h]V=\AN[F,h']V$, and $\tilde\omega_{F,h}=\tilde\omega_{F,h'}$.
	\end{enumerate}\end{rmq}
	\begin{proof}~\begin{enumerate}
			\item Let $x\in\FN[F,h]{(V_t)_t}$, that is, there exists a time $T\in\N$ above which for all times $t>T$, $F^t\sigma^{h(t)}(x)\in V_t$.
			By assumption, we get that $F^t\sigma^{h'(t)}(x)=\sigma^{h'(t)-h(t)}F^t\sigma^{h(t)}(x)\in V_t$.
			\item $\Omega_{F,h}(U)=\bigcap_{T\in\N}\cl{\bigcup_{t\ge T}F^t\sigma^{h(t)}(U)}=\bigcap_{T\in\N}\cl{\bigcup_{t\ge T}F^t(U)}=\Omega_F(U)$.
			\item Let $y\in\omega_{F,h}(U)$; there exist $x\in U$ and an increasing sequence $(n_k)_k$ such that $\lim_{k\to\infty}F^{n_k}\sigma^{h(n_k)}(x)=y$. 
			Since $h\sim h'$,  $(h(t)-h'(t))$ is bounded, for $t\in\N$. We deduce that there is a subsequence $(m_k)_k$ of $(n_k)_k$ such that $(h(m_k)-h'(m_k))$ is constant, say equal to $q\in\Z$.
			We deduce that $F^{m_k}\sigma^{h'(m_k)}(\sigma^q(x))=F^{m_k}\sigma^{h(m_k)}(x)$, so that this term converges to $y$ when $k$ goes to infinity.
			Since $U$ is $\sigma$-invariant, it contains $\sigma^q(x)$, so that $y\in\omega_{F,h'}(U)$.
			\item This can be directly derived from the definitions and the previous point.
			
			
			\popQED\end{enumerate}\end{proof}
	
	\subsection{Oblique directions}\label{s:oblique}
	Let $F$ be a CA with memory $r_-\in\Z$ and anticipation $r_+\in\Z$.
	Then for every $t\in\N$, $F^t$ can be defined by a rule of memory $r_-t$ and anticipation $r_+t$.
	But it could be that smaller parameters also fit.
	This motivates the following definition.
	
	For a sequence $(F_t)_{t\in\N}$ of CA, let us denote $\rpm-(t)$ and $\rpm+(t)$ the minimum possible memory and anticipation for $F_t$, and call them the \dfn{iterated memory} and \dfn{iterated anticipation}.
	Formally, 
	\[\rpm-(t)=\sup\set i\Z{\forall x,y\in A^\Z,x_{\ci i}=y_{\ci i}\implies F_t(x)_0=F_t(y)_0}\] and
	\[\rpm+(t)=\inf\set i\Z{\forall x,y\in A^\Z,x_{\ic i}=y_{\ic i}\implies F_t(x)_0=F_t(y)_0}.\]
	The next remark shows that essentially in the case of a unique direction of equicontinuity (up to $\sim$), the iterated memory is equivalent to the iterated anticipation.
	\begin{rmq}\label{r:eqlyapu}~\begin{enumerate}
			\item For every $t\in\N$, $-\infty<\rpm-(t)\le \rpm+(t)<+\infty$, if and only if $F_t$ is not a constant function.
			\item $(F_t\sigma^{h(t)})_{t\in\N}$ is equicontinuous if and only if $\rpm+\preceq-h\preceq \rpm-$ (in particular, if $F_t$ is never constant, $h\sim-\rpm-\sim-\rpm+$).
	\end{enumerate}\end{rmq}
	In the uniform case, some $F_t$ is constant if and only if the CA is nilpotent.
	\begin{proof}
		The first statement is direct from continuity of $F_t$.
		\\
		If $(F_t\sigma^{h(t)})_t$ is equicontinuous, then there exists $r\in\N$ such that \[\forall t\in\N, x,y\in A^\Z, x_{\cc{-r}r}=y_{\cc{-r}r}\implies F_t\sigma^{h(t)}(x)_0=F_t\sigma^{h(t)}(y)_0~. \]
		Since $F$ commutes with $\sigma$, we get $x_{\cc{-r-h(t)}{r-h(t)}}=y_{\cc{-r-h(t)}{r-h(t)}}\implies F_t(x)_0=F_t(y)_0$.
		\\
		We get that $-r-h(t)\le \rpm-(t)$ and $\rpm+(t)\le r-h(t)$.
		\\
		Conversely, assume that there exists $M\in\N$ such that $\forall t\in\N,\rpm+(t)\le M-h(t)$ and $-h(t)\le M+\rpm-(t)$, and let $l\in\N$.
		Then for every $t\in\N$ and $x,y\in A^\Z$ such that $x_{\cc{-l-M}{l+M}}=y_{\cc{-l-M}{l+M}}$, consider the configuration $z\in A^\Z$ such that $z_i=x_i$ for every $i\in\ic{l+M}$ and $z_i=y_i$ for every $i\in\ci{-l-M}$.
		By the assumed inequalities, $x_{\ic{l+\rpm+(t)+h(t)}}=z_{\ic{l+\rpm+(t)+h(t)}}$ and $z_{\ci{-l+\rpm-(t)+h(t)}}=y_{\ci{-l+\rpm-(t)+h(t)}}$, so that $F_t\sigma^{h(t)}(x)_{\cc{-l}l}=F_t\sigma^{h(t)}(z)_{\cc{-l}l}=F_t\sigma^{h(t)}(y)_{\cc{-l}l}$.
		This proves equicontinuity of $(F_t\sigma^{h(t)})_t$.
		\\
		The consequence between parentheses comes from the first point.
	\end{proof}
	
	In the case of a single CA, we have seen that $\rpm-(t)\ge r_-t$ and $\rpm+(t)\le r_+t$. It is known that these sequences will be asymptotically linear, the slopes being called the Lyapunov exponents (see \cite{mudrunner} for a discussion on possible growths for these sequences).
	
	We say that a direction $h$ is \dfn{oblique} for CA sequence $(F_t)_t$ if $h\notin[-\rpm+,-\rpm-]$. 
	We will show that the generic limit set in an oblique direction is equal to the limit set. 
	
	We say that a DS $\FF$ over space $X$ 
	is \dfn{weakly semi-mixing} if for every nonempty open sets $U,V,U',V'$ such that $V$ and $V'$ intersect $\Omega_\FF$ and for any $T\in\N$, there exists $t\ge T$ such that $U\cap F_t^{-1}(V)\ne\emptyset$ and $U'\cap F_t^{-1}(V')\ne\emptyset$.
	This implies that $\FF$ is \dfn{semi-transitive}, which means that for every nonempty open sets $U,V$ such that $V$ intersects $\Omega_\FF$ and for any $T\in\N$, there exists $t\ge T$ such that $U\cap F_t^{-1}(V)\ne\emptyset$.
	\begin{prop}\label{p:transnwand}
		A DS $\FF$ is semi-transitive if and only if for every sequence $(U_i)_{i\ge1}$ of open subsets intersecting $\Omega_\FF$, the set $\bigcup_{t_1<\cdots<t_i<\cdots}\bigcap_{i\ge1}F_{t_i}^{-1}(U_i)$ is comeager.
		In particular, $\FF$ is then semi-nonwandering.
	\end{prop}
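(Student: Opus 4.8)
The main content is the forward implication (semi-transitive $\Rightarrow$ the covering property); the converse and the ``in particular'' clause are short consequences. So fix a sequence $(U_i)_{i\ge1}$ of open sets, each meeting $\Omega_\FF$, and write $E=\bigcup_{t_1<\cdots<t_i<\cdots}\bigcap_{i\ge1}F_{t_i}^{-1}(U_i)$. For $n\ge1$ and $T\in\N$, introduce
\[W_{n,T}=\bigcup_{T\le t_1<\cdots<t_n}\ \bigcap_{i=1}^{n}F_{t_i}^{-1}(U_i),\]
which is open, being an arbitrary union of finite intersections of preimages of open sets under the continuous maps $F_t$. The plan is to prove (a) that semi-transitivity forces every $W_{n,T}$ to be dense, and (b) that $\bigcap_{n\ge1}\bigcap_{T\in\N}W_{n,T}\subseteq E$; since $\N\times\N$ is countable, (a) and the Baire category theorem then show the left-hand side of (b) to be comeager, hence so is its superset $E$. (One should not hope for equality in (b): $E$ is genuinely larger in general.)

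Step (a) is proved by induction on $n$, with the statement quantified over all sequences $(U_i)$ of open sets meeting $\Omega_\FF$ at once, so that the inductive hypothesis applies to shifted sequences. For $n=1$ and a nonempty open $V'$, semi-transitivity with $U=V'$, $V=U_1$ (which meets $\Omega_\FF$) and threshold $T$ gives $t\ge T$ with $V'\cap F_t^{-1}(U_1)\ne\emptyset$, so $W_{1,T}=\bigcup_{t\ge T}F_t^{-1}(U_1)$ is dense. For the inductive step, given a nonempty open $V'$, first use semi-transitivity with $U=V'$, $V=U_1$, threshold $T$ to obtain $t_1\ge T$ with $V'':=V'\cap F_{t_1}^{-1}(U_1)$ nonempty open; then apply the inductive hypothesis to the shifted sequence $(U_{i+1})_{i\ge1}$ with threshold $t_1+1$, which yields a point $x\in V''$ together with $t_1<t_2<\cdots<t_{n+1}$ satisfying $F_{t_j}(x)\in U_j$ for $2\le j\le n+1$; since also $F_{t_1}(x)\in U_1$, this $x$ lies in $V'\cap W_{n+1,T}$. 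For step (b), take $x$ in the intersection and build an increasing sequence $(s_k)_{k\ge1}$ recursively: having chosen $s_1<\cdots<s_k$ with $F_{s_i}(x)\in U_i$, the membership $x\in W_{k+1,s_k+1}$ provides $s_k<u_1<\cdots<u_{k+1}$ with $F_{u_j}(x)\in U_j$ for all $j$; set $s_{k+1}=u_{k+1}>s_k$. The sequence $(s_k)_k$ then witnesses $x\in E$.

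For the converse, given nonempty open $U$, $V$ with $V$ meeting $\Omega_\FF$ and a threshold $T$, apply the hypothesis to the constant sequence $U_i=V$: then $E=\bigcap_{T'\in\N}\bigcup_{t\ge T'}F_t^{-1}(V)=\compl{\FN[\FF]{\compl V}}$ by Remark~\ref{r:compl}, so it is comeager, hence dense, hence meets $U$; any $x\in U$ whose orbit visits $V$ infinitely often satisfies $F_t(x)\in V$ for some $t\ge T$, i.e.\ $U\cap F_t^{-1}(V)\ne\emptyset$, which is semi-transitivity. Finally, if $\FF$ is semi-transitive then, applying the just-proved equivalence with the constant sequence $U_i=U$ for any open $U$ meeting $\Omega_\FF$, the set $\compl{\FN[\FF]{\compl U}}$ is comeager, in particular not meager --- that is exactly semi-nonwanderingness. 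The step I expect to demand the most care is the bookkeeping in (a): organizing the induction so that it ranges uniformly over sequences, and using the threshold $t_1+1$ so that the witnessing times stay strictly increasing when passing to the shifted sequence.
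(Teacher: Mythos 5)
Your proof is correct and follows essentially the same route as the paper's: an induction driven by semi-transitivity yields dense open sets whose countable intersection is comeager and contained in the union over infinite time sequences, and the converse together with semi-nonwanderingness is obtained by specializing to constant sequences. The only real difference is your threshold-indexed family $W_{n,T}$ with the recursive extraction of the infinite sequence of times, which makes explicit a point the paper's proof treats implicitly (its claim that $\bigcap_n Z_n$ is ``another expression'' for the set of points visiting the $U_i$ in order needs a small greedy-selection argument), so your bookkeeping is a harmless, slightly more careful variant.
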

	\begin{proof}~\nopagebreak\begin{itemize}
			\item Suppose $\FF$ is semi-transitive, and that $(U_i)_{i\ge1}$ is a sequence of open sets intersecting $\Omega_\FF$.
			Let us show, by induction over $n\in\N$, that the open set $Z_{n}=\bigcup_{t_1<\cdots<t_n}\bigcap_{1\le i\le n}F_{t_i}^{-1}(U_i)$ is dense.
			Let $U_0$ be any nonempty open subset of $X$.
			Let us show, by induction over $n\in\N$, that the open set $Z_{n}\cap U_0=\bigcup_{0=t_0<t_1<\cdots<t_n}\bigcap_{0\le i\le n}F_{t_i}^{-1}(U_i)$ is nonempty.
			Trivially, $Z_0\cap U_0=U_0$.
			By {semi-transitivity}, there exists $t_{n+1}>t_n$ such that $Z_{n+1}\cap U_0=Z_n\cap U_0\cap F_{t_{n+1}}^{-1}(U_{n+1})$ is nonempty.
			We deduce that $Z_n$ is dense (in $X$), and hence, that $\bigcap_{n\in\N}Z_n$ is comeager.
			This is another expression for the set of points whose orbits visit the sequence $(U_i)$ in the correct order, which is what we had to prove.
			\item Now suppose that for every sequence $(U_i)_{i\ge1}$ of open subsets intersecting $\Omega_\FF$, the set $\bigcup_{t_1<\cdots<t_i<\cdots}\bigcap_{i\ge1}F_{t_i}^{-1}(U_i)$ is comeager.
			Let $U,V$ be nonempty open sets such that $V$ intersects $\Omega_\FF$, and $T\in\N$.
			If one defines $U_{T+1}=V$ and $U_i=X$ if $i\ne T+1$, then our assumption gives that there exists $0<t_1<\cdots<t_i<\cdots$ such that $F_{t_{T+1}}^{-1}(V)=\bigcup_{t_1<\cdots<t_i<\cdots}\bigcap_{i\ge1}F_{t_i}^{-1}(U_i)$ is comeager, which implies that it intersects $U$. Note that $t_{T+1}>T$.
			\item The definition of semi-nonwanderingness can simply be applied to the sequence of open sets constantly equal to $U$.
			\popQED\end{itemize}\end{proof}
	
	The previous proposition applies to semi-wixing systems, but weak semi-mixing has another strong consequence.
	\begin{rmq}\label{r:obliksens}
		Any weakly semi-mixing DS $\FF$ is sensitive or admits a trivial limit set.
	\end{rmq}
	\begin{proof}
		If $\Omega_\FF$ is not trivial, then there are two open subsets $V$ and $V'$ which are at positive distance $\varepsilon>0$ {and intersect $\Omega_\FF$}.
		Then for every $x\in X$ and $\delta>0$, $\B_\delta(x)$ intersects both $F_t^{-1}(V)$ and $F_t^{-1}(V')$, for some $t\in\N$, so that there are points $y$ and $y'$ in it, for which $d(F_t(y),F_t(y'))>\varepsilon$.
		By the triangular inequality, $F_t(x)$ should be at distance at least $\varepsilon/2$ of one of the two, which means that $x$ is not $\varepsilon/2$-stable.
	\end{proof}
	
	%
	The following proposition shows that every CA (and even CA sequence) in an oblique direction is weakly semi-mixing.
	\begin{prop}\label{p:obliktrans}
		If $(F_t)_t$ is a CA sequence and $h$ an oblique curve, then the DS $\FF=(F_t\sigma^{h(t)})_t$ is weakly semi-mixing.
	\end{prop}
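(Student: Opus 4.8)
The plan is to reduce to cylinders and exploit the fact that, in an oblique direction, the finite set of input cells on which a fixed output window of $F_t\sigma^{h(t)}$ depends drifts off to infinity as $t$ grows; once it has moved past the fixed finite windows that define $U$ and $U'$, witnesses can be glued together freely. Concretely, I would first fix nonempty open sets $U,V,U',V'$ with $V,V'$ meeting $\Omega_\FF$, and $T\in\N$, and replace them by sub-cylinders $U=[a]_i$, $U'=[a']_{i'}$, $V=[b]_j$, $V'=[b']_{j'}$ with $[b]_j$ and $[b']_{j'}$ still meeting $\Omega_\FF$. Since each $F_t$ commutes with $\sigma$ and has minimal memory $\rpm-(t)$ and anticipation $\rpm+(t)$, the restriction of $F_t\sigma^{h(t)}(x)$ to $\cc j{j+\len b-1}\cup\cc{j'}{j'+\len{b'}-1}$ depends only on the restriction of $x$ to the finite window $W(t)=\cc{\min(j,j')+\rpm-(t)+h(t)}{\max(j+\len b,j'+\len{b'})-1+\rpm+(t)+h(t)}$.

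Next I would turn obliqueness into an escape statement. As $h\notin[-\rpm+,-\rpm-]$, either $h\not\preceq-\rpm-$ or $-\rpm+\not\preceq h$; by the left--right symmetry (conjugating each $F_t\sigma^{h(t)}$ by the reflection $(x_i)_i\mapsto(x_{-i})_i$, which swaps $\rpm-$ with $-\rpm+$ and $h$ with $-h$) I may assume $h\not\preceq-\rpm-$, that is, that $h+\rpm-$ is unbounded above. A short argument (if $h(t)+\rpm-(t)\le M$ for all large $t$, then, adding the finitely many exceptions, $h+\rpm-$ would be bounded above) shows $\set tN{h(t)+\rpm-(t)>M}$ is unbounded for every $M$; hence $\min W(t)=\min(j,j')+\rpm-(t)+h(t)$ tends to $+\infty$ along an unbounded set of times, so for every $S\in\N$ there is $t\ge S$ in that set for which $W(t)$ is disjoint from both $\cc i{i+\len a-1}$ and $\cc{i'}{i'+\len{a'}-1}$.

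Then I would produce a single such time $t$ at which both patterns $b$ (at $j$) and $b'$ (at $j'$) are actually realized in the image, and glue. Here $\Omega_\FF=\bigcap_{S\in\N}\cl{\bigcup_{t\ge S}F_t(X)}$ (using $\sigma^{h(t)}(X)=X$) is an intersection of $\sigma$-invariant closed sets, hence a subshift, so $b,b'\in\lang(\Omega_\FF)$; combining this with the previous paragraph I would fix $t\ge T$ in the escaping set together with $y,y'$ such that $F_t\sigma^{h(t)}(y)\in[b]_j$ and $F_t\sigma^{h(t)}(y')\in[b']_{j'}$. Define $z$ to agree with $y$ on $W(t)$, with $a$ on $\cc i{i+\len a-1}$ (consistent since these are disjoint), and arbitrarily elsewhere; then $z\in U$, and $F_t\sigma^{h(t)}(z)$ agrees with $F_t\sigma^{h(t)}(y)$ on $\cc j{j+\len b-1}$, so $F_t\sigma^{h(t)}(z)\in V$, i.e.\ $z\in U\cap(F_t\sigma^{h(t)})^{-1}(V)$; the symmetric construction from $y'$ and $a'$ gives $z'\in U'\cap(F_t\sigma^{h(t)})^{-1}(V')$. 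Since $t\ge T$, this is exactly weak semi-mixing.

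The main obstacle I expect is the common-time step: securing a single $t$ at which \emph{both} cylinders are met by the image. For a single cellular automaton it is immediate, since the images decrease and $F_t\sigma^{h(t)}(X)=F^t(X)\supseteq\Omega_F=\Omega_\FF$ for every $t$, so every $t$ realizes both patterns; for a genuine cellular-automaton sequence one really needs the image subshifts $\cl{\bigcup_{t\ge S}F_t(X)}$ to approximate $\Omega_\FF$ well enough that some $t\ge S$ realizes both, and I would be careful there. A secondary, routine point is the one noted above: ``oblique'' literally constrains only an extremal limit of $h+\rpm-$, so some care is needed to see that the dependency window escapes along an \emph{unbounded} set of times, which is what the gluing requires.
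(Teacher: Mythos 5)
Your argument is essentially the paper's own proof: reduce to cylinders, use obliqueness (after the same left--right symmetry reduction) to push the input window on which the target cylinders depend past the fixed windows of $U$ and $U'$ at arbitrarily large times, and glue a witness configuration from a preimage of the target pattern and the source pattern on disjoint coordinates. The ``main obstacle'' you flag --- finding a single large $t$ at which \emph{both} $b$ and $b'$ admit $F_t\sigma^{h(t)}$-preimages --- is handled no more carefully in the paper, whose proof simply asserts the existence of the preimage word $w_T$ (i.e.\ nonemptiness of $F_T^{-1}([v]_n)$) at the chosen time; as you observe, this is automatic in the uniform case $F_t=F^t$ (the case actually used later, e.g.\ in Corollary~\ref{c:obliq} and Theorem~\ref{t:class}), since the images are nested and contain $\Omega_F$, but it does not follow from $b,b'\in\lang(\Omega_\FF)$ alone for a genuine sequence, where the two patterns may be realized only along disjoint sets of times. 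So your explicit treatment of the unbounded set of escape times and your stated caveat make your write-up, if anything, more careful than the printed proof rather than revealing a gap specific to your approach.
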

	\begin{proof}
		It is enough to prove the property for $U=[u]_m$, $U'=[u']_{m'}$, $V=[v]_n$ and $V'=[v']_{n'}$ four cylinders, such that patterns $v$ and $v'$ appear in $\Omega_{\FF}$, and $m,m',n,n'\in\Z$.
		By extending $v$ and/or $v'$ (into a pattern which still appears in the limit set) and $u$ and/or $u'$, we can suppose that they have the same length pairwise, that $m=m'$ and $n=n'$.
		Suppose, without loss of generality, that $h$ is left-oblique: $-\rpm-\prec h$; in particular, there exists $T\in\N$ such that $\rpm-(T)+h(T)+m>n+\len u$.
		$F_T\sigma^{h(T)}$ is a CA of memory $\rpm-(T)+h(T)$ and anticipation $\rpm+(T)+h(T)$: there exists $w_T\in A^{\len{v}+(\rpm+(T)-\rpm-(T))}$ such that \[F_T^{-1}([v]_m)\supseteq [w_T]_{\rpm-(T)+h(T)+m}~.\]
		Hence, there exist $T\in\N$ and $y\in A^\Z$ such that $y\in [u]_n\cap[w_T]_{\rpm-(T)+h(T)+m}$.
		The same is true for $u'$ and $v'$ (for the same $T$).
	\end{proof}

	We deduce the following.
	\begin{crl}\label{c:obliq}
		Consider DS $\FF=(F_t\sigma^{h(t)})_{t}$ where $(F_t)_{t}$ is a CA sequence and $h$ an oblique curve.
		Then $\tilde\omega_\FF=\Omega_\FF$; it is either sensitive or nilpotent.
	\end{crl}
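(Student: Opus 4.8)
The plan is to chain together three earlier results. First, Proposition~\ref{p:obliktrans} tells us that $\FF=(F_t\sigma^{h(t)})_t$ is weakly semi-mixing, since $h$ is an oblique curve for the CA sequence $(F_t)_t$. Weak semi-mixing has two consequences that I would invoke in sequence. On the one hand, by the last sentence of Proposition~\ref{p:transnwand} (weak semi-mixing implies semi-transitive, which in turn implies semi-nonwandering), $\FF$ is semi-nonwandering; and then Proposition~\ref{p:snwand} gives immediately that $\tilde\omega_\FF=\Omega_\FF$. That settles the first assertion. On the other hand, by Remark~\ref{r:obliksens}, any weakly semi-mixing DS is either sensitive or has a trivial limit set, i.e. $\Omega_\FF$ is a singleton.

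It remains to upgrade ``$\Omega_\FF$ is a singleton'' to ``$\FF$ is nilpotent'' (which is the word used in the statement, since \emph{asymptotically nilpotent} only asks $\omega_\FF$ to be a singleton, while here $\Omega_\FF$ itself is). By the paragraph on nilpotence in Subsection~\ref{ss:nwand}, a CA is nilpotent if and only if its limit set is finite; and here, via the identification $\tilde\omega_\FF=\Omega_\FF$ just obtained, $\Omega_\FF$ is reduced to a single point, hence finite, so $\FF$ is nilpotent. One small point of care: the cited characterization of nilpotence for CA is for a \emph{single} CA (the vertical direction); for a general oblique direction one should note that $\FF$ is nilpotent iff $F_T\sigma^{h(T)}$ is a constant map for some $T$, which is exactly what $\Omega_\FF$ being a singleton forces (the decreasing intersection $\bigcap_T\cl{\bigcup_{t\ge T}F_t\sigma^{h(t)}(X)}$ being a singleton means some $F_T\sigma^{h(T)}(X)$ is already close enough to a point that, combined with continuity and compactness, it collapses — and in the Cantor space $A^\Z$ a closed set of diameter $<1$ contained in a single cylinder iterates to a point). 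Alternatively, if $(F_t)_t$ arises from a single CA $F$ as $F_t=F^t$, one simply observes $\Omega_F$ finite $\Rightarrow$ $F$ nilpotent $\Rightarrow$ $F$ nilpotent in every direction (as recalled in Subsection~\ref{ss:nwand}).

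The main obstacle, such as it is, is purely bookkeeping: making sure the hypotheses of Proposition~\ref{p:transnwand} and Remark~\ref{r:obliksens} are met (they require the various open sets to \emph{intersect} $\Omega_\FF$, which is handled inside those statements already) and correctly translating ``trivial limit set'' in the nonuniform setting into genuine nilpotence. No new estimates are needed; the proof is a three-line assembly:
\[
\text{oblique}\ \xRightarrow{\ref{p:obliktrans}}\ \text{weakly semi-mixing}\ \xRightarrow{\ref{p:transnwand}}\ \text{semi-nonwandering}\ \xRightarrow{\ref{p:snwand}}\ \tilde\omega_\FF=\Omega_\FF,
\]
together with Remark~\ref{r:obliksens} giving sensitivity or triviality of $\Omega_\FF$, the latter being nilpotence.
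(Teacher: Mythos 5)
Your proposal is correct and follows essentially the same route as the paper: the chain Proposition~\ref{p:obliktrans} $\Rightarrow$ Proposition~\ref{p:transnwand} $\Rightarrow$ Proposition~\ref{p:snwand} for the equality $\tilde\omega_\FF=\Omega_\FF$, and Remark~\ref{r:obliksens} combined with the fact that a CA has trivial limit set if and only if it is nilpotent for the sensitive/nilpotent dichotomy. Your extra care about upgrading a trivial limit set to genuine nilpotence in the nonuniform directional setting is a reasonable elaboration (shift-invariance of the images is the key point), but it does not change the argument, which the paper settles by citing the known fact.
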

	\begin{proof}
		The equality is direct from Propositions~\ref{p:obliktrans},~\ref{p:transnwand} and~\ref{p:snwand}.
		Sensitivity comes from Proposition~\ref{p:obliktrans}, Remark~\ref{r:obliksens} and the known fact that the limit set of a CA is trivial if and only if it is nilpotent.
	\end{proof}
	
	\subsection{Almost equicontinuity in two directions}
	The purpose of this subsection is to show that if the CA is almost equicontinuous in two directions of opposite sign, then its generic limit set is finite.
	We essentially reprove \cite[Prop~3.2.3, Prop~3.3.4]{Martinthese} (or the corresponding result for linear directions from \cite{sablik}), but additionally discuss the generic limit set. 
	
	Here is the main lemma for understanding directional dynamics. 
	It is based on the fact that if a word $u$ is blocking along $h'\in\BF$, and $s'$ is the minimal corresponding offset, then in particular for every $t\in\N$ there exists $a_{u,h'}(t)\in A$ such that $\forall z\in[u]_{s'} ,a_{u,h'}(t)=F^t(z)_{h'(t)}$.
	\begin{lem}\label{l:qnilpdir}
		Let $F$ be a CA over $A^\Z$ with blocking words $u$ along $h'\in\BF$ with offset $s'\in\Z$ and $v$ along $h''\in\BF$ with offset $s''\in\Z$, 
		and $q'=h'+\len v-s'$ and $q''=h''-s''$.
		Then for every $z\in[v]_0$ 
		and $j\in\co{q'(t)}{q''(t)}$, $F^t(z)_j=a_{u,h'}(t)$.
	\end{lem}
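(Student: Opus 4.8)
The plan is to fix $t\in\N$ and $j\in\co{q'(t)}{q''(t)}$ (the statement being vacuous when this interval is empty) and to produce an auxiliary configuration $z'$ that still carries $v$ at position $0$, agrees with $z$ on $\ic0$, and in addition carries $u$ at the unique position for which the column determined by $u$ at time $t$ lands exactly on coordinate $j$. On that column the value is $a_{u,h'}(t)$ by the very definition of $a_{u,h'}$, and the right-blocking part of $v$ (which holds since $v$ is blocking) will then transport this value back from $z'$ to $z$ on all coordinates $\le q''(t)$, which covers $j$.

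First I would set $p=j-h'(t)+s'$ and record that the hypothesis $j\ge q'(t)=h'(t)+\len v-s'$ is exactly $p\ge\len v$; hence a copy of $u$ placed at position $p$ occupies cells disjoint from $\ic{\len v-1}$. I would then define $z'$ by $z'_i=z_i$ for $i\le0$, $z'_i=v_i$ for $0<i<\len v$, $z'_i=u_{i-p}$ for $p\le i<p+\len u$, and arbitrarily on the remaining cells. Since $z\in[v]_0$ already forces $z_0=v_0$, these prescriptions do not conflict, so $z'\in[v]_0\cap[u]_p$ while $z'$ and $z$ coincide on $\ic0$.

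Next I would invoke the right-blocking of $v$ along $h''$: conjugating by the shift $\sigma^{-s''}$ that sends position $0$ to the blocking offset $s''$, the configurations $z$ and $z'$ agree to the left of that offset and both contain $v$ there, so $F^t(z)$ and $F^t(z')$ agree on $\ic{h''(t)-s''}=\ic{q''(t)}$, and in particular $F^t(z)_j=F^t(z')_j$ because $j<q''(t)$. Finally I would evaluate $F^t(z')_j$ from $z'\in[u]_p$: the shifted configuration $\sigma^{p-s'}(z')$ lies in $[u]_{s'}$, and since $F$ commutes with $\sigma$, the defining identity $F^t(\cdot)_{h'(t)}\equiv a_{u,h'}(t)$ on $[u]_{s'}$ yields $F^t(z')_{h'(t)+p-s'}=a_{u,h'}(t)$; but $h'(t)+p-s'=j$ by the choice of $p$, which gives the claim.

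I expect the only genuinely delicate part to be the index bookkeeping: tracking the offsets $s'$ and $s''$ correctly through the shift-commutation of $F$ so that the blocking properties of $u$ and $v$, stated at offsets $s'$ and $s''$, get applied at the positions $p$ and $0$ actually occurring in $z'$; and checking that the two sides of the hypothesis $q'(t)\le j<q''(t)$ are precisely what is needed, respectively, to place $[u]_p$ without clashing with the cells pinned by $v$ and by $z$ on $\ic0$, and to keep $j$ within the range on which the right-blocking of $v$ constrains the image. Notably, no induction on $t$ should be needed, since the blocking hypotheses already hold at every time step.
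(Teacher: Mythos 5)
Your proof is correct and follows essentially the same route as the paper: you build exactly the auxiliary configuration the paper calls $y$ (your $z'$), namely an element of $[v]_0\cap[u]_{j-h'(t)+s'}$ agreeing with $z$ on $\ic0$, read off $a_{u,h'}(t)$ at coordinate $j$ via the shift-commutation and the definition of $a_{u,h'}$, and transfer it to $F^t(z)$ using the right-blockingness of $v$ with offset $s''$. The only difference is that you make the offset bookkeeping and the role of the inequality $j\ge q'(t)$ (ensuring the copy of $u$ fits without clashing) fully explicit, which the paper leaves implicit.
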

	In other words, in the orbit of such a configuration $z$ will, after some time, appear such letters that depend only on $u$ (which does not necessarily appear in $z$).
	Of course the symmetric statement is true for $[u]$. 
	Before proving the lemma, here are some remarks on directionnally blocking words.
	Of course the symmetric statements hold for right-blocking words.
	\begin{rmq}\label{r:blocking}~\begin{enumerate}
			\item\label{i:super} From the definition, one can see that if $u$ is blocking for CA $F$ along curve $h$, then any word containing $u$ also.
			\item\label{i:common} If two directions are almost equicontinuous, Proposition~\ref{p:wall} states that they admit blocking words $u$ and $v$.
			From the previous point, they admit a common blocking word $uv$.
			\item\label{i:min}
			From the definition, one can see that: if $u$ is a right-blocking word for CA $F$ along directions $h'$ and $h''$, then also along any direction $h\succeq\min(h',h'')$.
			\item\label{i:max}
			From the previous point and the symmetric statement, if $u$ is right- and left-blocking along directions $h'$ and $h''$, then also along any direction $h\in[\min(h',h''),\max(h',h'')]$.
			\item
			In particular, right- and left-blockingness are preserved by $\sim$ (which is not the case for strong blockingness).
	\end{enumerate}\end{rmq} 
	\begin{proof}[Proof of Lemma~\ref{l:qnilpdir}]
		Since they are left-blocking and right-blocking, respectively, for every $t\in\N$:
		\[ \both{ 
			\forall x,y\in[u]_{s'},x_{\ci{s'}}=y_{\ci{s'}}\implies F^t(x)_{\ci{h'(t)}}=F^t(y)_{\ci{h'(t)}}\\
			\forall x,y\in[v]_{s''},x_{\ic{s''}}=y_{\ic{s''}}\implies F^t(x)_{\ic{h''(t)}}=F^t(y)_{\ic{h''(t)}}~.}\]
		By definition, $\forall z\in[u]_{s'},a_{u,h'}(t)=F^t(z)_{h'(t)}$.
		Now let 
		$j\in\co{q'(t)}{q''(t)}$, so that there is a configuration $y\in[v]_0\cap[u]_{j-h'(t)+s'}$ such that $y_{\ic0}=z_{\ic0}$ and $\sigma^{j-h'(t)}(y)_{\ci{s'}}=z_{\ci{s'}}$. 
		Since $\sigma^{j-h'(t)}(y)$ is in $[u]_{s'}$, we get: $F^t(y)_j=a_{u,h'}(t)$.
		On the other hand, since $v$ is right-blocking along $h''$, 
		$\forall t\in\N, F^t(z)_{\ic{h''(t)-s''}}=F^t(y)_{\ic{h''(t)-s''}}$.
		In particular, we get that $F^t(z)_j=F^t(y)_j=a_{u,h'}(t)$.
		%
	\end{proof}
	
	\begin{prop}\label{p:alldir}
		Let $F$ be a CA with a blocking word $u$ along two distinct directions $h'$ and $h''$, with $h''\not\preceq h'$.
		Along any direction $h\in\BF$, the direct realm $\FN[F,h]{(\dinf{a_{u,h'}(t)})_t}$ includes all $\sigma$-periodic configurations where $u$ appears;
		Moreover, the realm $\AN[F,h]{(\dinf{a_{u,h'}(t)})_t}$ includes:
		\begin{enumerate}
			\item\label{i:uuu} all configurations where $u$ appears infinitely many times on the left and on the right, if $h'\preceq h\preceq h''$; 
			\item\label{i:uu} all configurations where $u$ appears infinitely many times on the right, if $h'\pprec h\preceq h''$; 
			\item all configurations where $u$ appears infinitely many times on the left, if $h'\preceq h\pprec h''$; 
			\item\label{i:u} all configurations where $u$ appears, if $h'\pprec h\pprec h''$. 
		\end{enumerate}
	\end{prop}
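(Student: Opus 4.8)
The plan is to use Lemma~\ref{l:qnilpdir} as the engine throughout. Applied with $v=u$ to $\sigma^m(x)\in[u]_0$ and then shifted back, it says that a single occurrence of $u$ at position $m$ in a configuration $x$ pins a whole moving window of cells of $F^t(x)$ to the letter $a_{u,h'}(t)$, namely $F^t(x)_j=a_{u,h'}(t)$ for every $j\in\co{h'(t)+\len u-s'+m}{h''(t)-s''+m}$, where $s',s''$ are the minimal offsets of $u$ along $h',h''$. Before the case analysis I would record two reductions: since $u$ is blocking along both $h'$ and $h''$, Remark~\ref{r:blocking} gives that it is blocking along every direction of $[h',h'']$, with some minimal offset $s_h$ along $h$; and in all four enumerated cases $h'\preceq h''$, hence $h'\prec h''$ by the hypothesis $h''\not\preceq h'$. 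I would also recall that by Remark~\ref{r:omega} realms and direct realms along $h$ only depend on the $\sim$-class of $h$.

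For the direct-realm claim, let $x$ be $\sigma$-periodic of period $p$ with an occurrence of $u$. Since $h''\not\preceq h'$, the difference $h''-h'$ is unbounded above, so one can pick a time $t_0$ at which the window above (from any occurrence of $u$) has length at least $p$; by $p$-periodicity this forces $F^{t_0}(x)$ to be the monochrome configuration $\dinf{a_{u,h'}(t_0)}$. The key observation is then that a cellular automaton maps monochrome configurations to monochrome configurations, so $F^t(x)$ is monochrome for all $t\ge t_0$, and evaluating at a cell pinned by an occurrence of $u$ identifies its letter as $a_{u,h'}(t)$. Since $\sigma$ fixes monochrome configurations, $F^t\sigma^{h(t)}(x)=\dinf{a_{u,h'}(t)}$ for every $t\ge t_0$, i.e. $x\in\FN[F,h]{(\dinf{a_{u,h'}(t)})_t}$, and this for an arbitrary curve $h$.

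For the realm I would treat first the three cases carrying a $\pprec$. Fix $x$ in the relevant class and $n\in\N$; the goal is a time $T$ beyond which $F^t(x)_j=a_{u,h'}(t)$ for all $j\in\cc{h(t)-n}{h(t)+n}$, and a single well-chosen occurrence of $u$ will do it. If $h'\pprec h\pprec h''$, any occurrence $m$ works: $h(t)-h'(t)\to+\infty$ eventually pushes the left end of the window below $h(t)-n$ and $h''(t)-h(t)\to+\infty$ pushes the right end above $h(t)+n$. If $h'\pprec h\preceq h''$ with $u$ appearing infinitely often on the right, I would pick an occurrence $m$ far enough to the right that the bound from $h\preceq h''$ keeps $h''(t)-s''+m>h(t)+n$ for all $t$, while $h'\pprec h$ handles the left end; the symmetric choice settles the case $h'\preceq h\pprec h''$ with $u$ infinitely often on the left.

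The hard part is the remaining case: $h'\preceq h\preceq h''$, configurations with $u$ infinitely often on both sides. Here one only has $h'\prec h''$, so the ``corridor width'' $h''(t)-h'(t)$ need not tend to infinity and a single occurrence's window may fail to straddle $h(t)$ at some large times. The plan is to use occurrences on \emph{both} sides: given $n$ and a large $t$, select an occurrence $m_-$ far enough to the left and an occurrence $m_+$ far enough to the right and combine the windows they generate, read through the intermediate direction $h$ (using that $u$ blocks along $h$), so that together they cover $\cc{h(t)-n}{h(t)+n}$. The delicate points will be to track how these two windows depend on $t$ through the three offsets $s',s_h,s''$ and the preorder inequalities bounding $[h',h'']$, and to verify that both pieces carry the letter $a_{u,h'}(t)$ rather than $a_{u,h}(t)$ or $a_{u,h''}(t)$; I expect essentially all the work of the proof to sit in this case.
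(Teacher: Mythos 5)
Your handling of the direct-realm claim and of the three cases involving $\pprec$ is sound and essentially the paper's argument: a single occurrence of $u$ at position $m$ pins the window $\co{q'(t)+m}{q''(t)+m}$ with $q'=h'+\len u-s'$, $q''=h''-s''$; for a $\sigma$-periodic configuration one wide window (which exists since $h''\not\preceq h'$) forces global monochromy, which persists and is identified as $a_{u,h'}(t)$; and in points 2--4 a suitably chosen occurrence makes the window straddle $\cc{h(t)-n}{h(t)+n}$ for all large $t$. The gap is point 1, which you leave as a plan, and the plan as described would fail. Under $h'\preceq h\preceq h''$ one only knows that $h''-h'$ is unbounded above, not that it tends to $+\infty$; so there can be infinitely many times $t$ with $q''(t)-q'(t)\le 0$ (e.g.\ $h'=0$ and $h''$ a bounded-variation curve that returns to $0$ infinitely often while reaching arbitrarily large values). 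At such a time the window produced by \emph{every} occurrence of $u$ is empty: the windows attached to occurrences at $m_-$ and $m_+$ are horizontal translates of the same (possibly empty) interval, so ``combining'' them covers nothing, and Lemma~\ref{l:qnilpdir} gives no information about $F^t(z)$ near $h(t)$ at those times --- yet the realm statement demands closeness to $\dinf{a_{u,h'}(t)}$ at \emph{all} large $t$, including these.

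The paper closes this case by a different mechanism, which you would need to import or re-derive. Since $u$ is blocking along every direction in $[\min(h',h''),\max(h',h'')]$ (Remark~\ref{r:blocking}), Proposition~\ref{p:wall} makes every configuration with infinitely many occurrences of $u$ on both sides an equicontinuity point along $h$; the realm $\AN[F,h]{(\dinf{a_{u,h'}(t)})_t}$ is dense, because by your (correct) first part it already contains all $\sigma$-periodic configurations in which $u$ appears; and Proposition~\ref{p:eqan}, applied to the sequence of singletons $V_t=\{\dinf{a_{u,h'}(t)}\}$, then places every such equicontinuity point in the realm. The point is that the periodic approximant is \emph{globally} monochrome from some time on (monochromy, once created at one wide-corridor time, propagates forward), and equicontinuity transfers this to $z$ even at the narrow-corridor times where no pinned window exists; a purely window-covering argument cannot see those times. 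It also disposes of your worry about which letter appears, since the approximants converge to exactly $\dinf{a_{u,h'}(t)}$. As it stands, then, the proposal proves the direct-realm claim and points 2--4, but point 1 remains unproven and its sketched strategy is not repairable without this extra idea.
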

	\begin{proof}
		Consider a configuration $z\in[u]_i$, for some $i\in\Z$, with some $\sigma$-period $p\ge1$.
		Let $q'$ and $q''$ be as in Lemma~\ref{l:qnilpdir}. Since $q''\sim h''\not\preceq h'\sim q'$, there exists $T\in\N$ such that $q'(T)\le q''(T)+p$.
		Lemma~\ref{l:qnilpdir} says that then $F^T(z)_j=a_{u,h'}(T)$ for all $j\in\co{i+q'(T)}{i+q''(T)}$, and, by periodicity, for all $j\in\Z$.
		This means that $F^T(z)$ is monochrome, and it is clear that is stays monochrome for $t\ge T$.
		Since, by definition of $a_{u,h'}(t)$, it appears in $F^t(z)$, we deduce that the latter is equal to $\dinf{a_{u,h'}(t)}$.
		\begin{enumerate}
			\item From Point~\ref{i:max} of Remark~\ref{r:blocking}, $u$ is right- and left-blocking along every curve $h\in[\min(h',h''),\max(h',h'')]$.
			So from Proposition~\ref{p:wall}, configurations with infinitely many occurrences of $u$ on the left and on the right are equicontinuous.
			From Proposition~\ref{p:eqan} and the previous point that $\AN[F,h]{(\dinf{a_{u,h'}(t)})_t}\supseteq\FN[F,h]{(\dinf{a_{u,h'}(t)})_t}$ is dense, these equicontinuous configurations must also be in $\AN[F,h]{(\dinf{a_{u,h'}(t)})_t}$.
			\item
			Let $z\in\bigcap_{i\in\Z}\bigcup_{j\ge i}[u]_j$ and $n\in\N$.
			If $h\preceq h''\sim q''$, then there exists $j\ge\max_{t\in\N}h(t)-q''(t)+n$ such that $z\in[u]_j$.
			If $q'\sim h'\pprec h$, then there exists $T\in\N$ such that $\forall t\ge T,q'(t)+j\le h(t)-n$.
			From Lemma~\ref{l:qnilpdir} and these inequalities, for all $t\ge T$, the pattern $F^t(z)_{\cc{h(t)-n}{h(t)+n}}$ is monochrome.
			Since this is true for every $n$, every limit point of the orbit must be monochrome. 
			\item This case is symmetric to the previous one.
			\item Let $z\in[u]_j$ for some $j\in\Z$ and $n\in\N$.
			If $q'\sim h'\pprec h\pprec h''\sim q''$, then there exists $T\in\N$ such that for all $t\ge T$, $h(t)+n\le q''(t)+j$ and $q'(t)+j\le h(t)-n$.
			From Lemma~\ref{l:qnilpdir} and these inequalities, for all $t\ge T$, the pattern $F^t(z)_{\cc{h(t)-n}{h(t)+n}}$ is monochrome.
			\popQED\end{enumerate}\end{proof}

	\subsection{Classification of generic limit sets up to directions}
	We recall the classifications of CA up to shift from \cite{sablik,Martin} and emphasize the properties of each class in terms of generic limit set.
	As the closure of an asymptotic set, the generic limit set may depend on the direction.
	By \dfn{strictly almost equicontinuous}, we mean almost equicontinuous but not equicontinuous.
	\begin{thm}\label{t:class}
		Every CA $F$ satisfies exactly one of the following statements:
		\begin{enumerate}
			\item\label{i:class1} $F$ is \emph{nilpotent}; there is a symbol $a\in A$ such that for all $h\in\BF$, $\tilde\omega_{F,h}=\{\dinf{a}\}$.
			\item\label{i:class2} $F$ is \emph{equicontinuous along a single direction} $h'\in[-\rpm+,-\rpm-]$, and sensitive along other directions; for every $h\in\BF$, $\tilde\omega_{F,h}=\Omega_F$ is infinite.
			\item\label{i:class3} $F$ is strictly \emph{almost equicontinuous along a nondegenerate interval} $S\subseteq[-\rpm+,-\rpm-]$ and sensitive along other directions; there exists $a\in A$ such that $\tilde\omega_{F,h}= 
			\orb_F(\dinf{a})$ for every $h\in S$, and $\tilde\omega_{F,h}$ is infinite for every $h\notin S$;
			moreover, $\E_{F,h}\subseteq\AN[F,h]{\tilde\omega_{F,h}}=\asym[F,h]{\dinf a}$, and if $h\in\oin S$, $\E_{F,h}$ includes a dense open set.
			\item\label{i:class4} $F$ is strictly \emph{almost equicontinuous} \emph{along a single direction} $h'\in[-\rpm+,-\rpm-]$ 
			and sensitive along other directions; for every $h\in\BF$, $\tilde\omega_{F,h}$ is infinite.
			\item[4'.]\label{i:class4p} $F$ is strictly \emph{almost equicontinuous along a single direction} $h'\in[-\rpm+,-\rpm-]$ 
			and sensitive along other directions; $\tilde\omega_{F,h}$ is finite if and only if $h=h'$.
			\item\label{i:class5} $F$ is \emph{sensitive in every direction}; $\tilde\omega_{F,h}$ is infinite along all $h\in\BF$.
	\end{enumerate}\end{thm}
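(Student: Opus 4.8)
\emph{Overall strategy.} The theorem is a synthesis: the first clause of each case (the equicontinuity/sensitivity behaviour across directions) is the classification of \cite{sablik,Martin}, while the clauses about $\tilde\omega_{F,h}$ are to be read off from the results of the present paper. So the plan is to recall the shape of the set of equicontinuity directions, then go through the finitely many possible shapes and, for each, invoke the relevant proposition. By Proposition~\ref{p:wall} and Remark~\ref{r:eqdir}, each direction $h$ is either one of almost equicontinuity or one of sensitivity, exclusively; and by \cite{sablik,Martin} the set $S$ of directions of almost equicontinuity is a curve interval, while $F$ is equicontinuous in at most one direction unless it is nilpotent. (This last fact can be re-derived here: if $F$ is equicontinuous along two distinct directions, then by Remark~\ref{r:blocking} every long enough word is two-sided blocking along the whole interval they span, so by Lemma~\ref{l:qnilpdir} every $\sigma$-periodic configuration maps after finitely many steps to a fixed monochrome configuration, hence so does every configuration, by continuity and $\sigma$-invariance.) If $F$ is nilpotent, the eventual constant value is $\sigma$-invariant, hence equals $\dinf a$ for some $a\in A$, so $\Omega_{F,h}=\{\dinf a\}$ in every direction and $\tilde\omega_{F,h}=\{\dinf a\}$ since $\tilde\omega_{F,h}\subseteq\Omega_{F,h}$: this is case~\ref{i:class1}, visibly disjoint from the others.

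\emph{Non-nilpotent cases.} Assume now $F$ is not nilpotent, so $\Omega_F$ is infinite, and by Corollary~\ref{c:obliq} every oblique direction is sensitive, whence $S\subseteq[-\rpm+,-\rpm-]$. Two observations are used repeatedly. For any sensitive direction $h$, Corollary~\ref{c:sensinfini}, applied to the comeager (hence non-meager) realm of $\tilde\omega_{F,h}$, shows that $\tilde\omega_{F,h}$ --- which is the closure of the asymptotic set of that realm --- is infinite. And for any direction of equicontinuity $h$, Proposition~\ref{p:equi} gives $\tilde\omega_{F,h}=\Omega_{F,h}=\Omega_F$ (the last equality by Remark~\ref{r:omega}). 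Hence: if $S=\emptyset$, all directions are sensitive and $\tilde\omega_{F,h}$ is infinite everywhere, which is case~\ref{i:class5}. If $S=\{h'\}$ and $F$ is equicontinuous along $h'$, we are in case~\ref{i:class2}: along $h'$ the second observation applies, and along $h\not\sim h'$ we still must get $\tilde\omega_{F,h}=\Omega_F$, not merely ``infinite''; for this I would use that a non-nilpotent equicontinuous CA reaches $\Omega_F$ after finitely many steps and acts on $\Omega_F$ as a bijective equicontinuous system, so that $(F^t\sigma^{h(t)})_t$ is semi-nonwandering in every direction (a finite-time image of a small ball around a point of $\Omega_F$ is revisited cofinally often along $h$), and conclude by Proposition~\ref{p:snwand}. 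If $S=\{h'\}$ and $F$ is not equicontinuous along $h'$, cases~\ref{i:class4} and~$4'$ are precisely the dichotomy ``$\tilde\omega_{F,h'}$ infinite'' versus ``$\tilde\omega_{F,h'}$ finite'' (in the finite case Proposition~\ref{p:periodicorbit} identifies it as the orbit of a monochrome configuration), with $\tilde\omega_{F,h}$ infinite for $h\not\sim h'$; this dichotomy also keeps the two cases disjoint from each other and from the rest.

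\emph{Nondegenerate interval, case~\ref{i:class3}.} Here $S=[h',h'']$ with $h'\prec h''$. I would fix a common two-sided blocking word $u$ for $h'$ and $h''$ (Remark~\ref{r:blocking}), which is then two-sided blocking along every direction of $[h',h'']\supseteq S$, so that Lemma~\ref{l:qnilpdir} and Proposition~\ref{p:alldir} apply along every $h\in S$: $\AN[F,h]{(\dinf{a_{u,h'}(t)})_t}$ contains the dense set of configurations with suitable occurrences of $u$, dense \emph{open} when $h\in\oin S$ by point~\ref{i:u} of Proposition~\ref{p:alldir}. Since $(\dinf{a_{u,h'}(t)})_t$ is eventually the $F$-orbit of a monochrome configuration, $\tilde\omega_{F,h}$ is finite, so by Proposition~\ref{p:periodicorbit} it equals $\orb_F(\dinf a)$ for a monochrome $\dinf a$, with $\asym[F,h]{\dinf a}$ comeager. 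I would then show this orbit is a single fixed point: Corollary~\ref{c:fini} gives $\AN[F,h]{\tilde\omega_{F,h}}=\bigsqcup_{z\in\tilde\omega_{F,h}}\asym[F,h]z$ with all pieces nonempty, whereas two distinct monochrome configurations of one $F$-cycle cannot be asymptotic (their directional orbits are genuine cyclic translates, hence stay at constant distance), so the cycle has length one and $\AN[F,h]{\tilde\omega_{F,h}}=\asym[F,h]{\dinf a}$. The inclusion $\E_{F,h}\subseteq\AN[F,h]{\tilde\omega_{F,h}}$ is Proposition~\ref{p:eqan} together with Corollary~\ref{c:aleq} (the realm being comeager), and for $h\in\oin S$ the configurations in which $u$ occurs are equicontinuity points --- exactly what the proof of Proposition~\ref{p:alldir} shows, a single occurrence of $u$ making a fixed-size window around the tracked position eventually monochrome with content depending only on $t$ --- so $\E_{F,h}$ contains a dense open set; for $h\notin S$, sensitivity gives $\tilde\omega_{F,h}$ infinite.

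\emph{Exhaustiveness and main obstacle.} That exactly one case holds then follows from the trichotomy on the shape of $S$ (empty, singleton, nondegenerate interval), the nilpotence alternative, the equicontinuity alternative when $S$ is a singleton, and the finite/infinite alternative separating cases~\ref{i:class4} and~$4'$. I expect the one genuinely delicate point to be the claim in case~\ref{i:class2} that $\tilde\omega_{F,h}=\Omega_F$ also along the \emph{sensitive} directions, which requires the semi-nonwanderingness argument above based on the structure of equicontinuous CA; the remainder is a matter of correctly assembling results already proven.
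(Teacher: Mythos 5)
Your proof follows the paper's overall architecture (trichotomy on the set of almost equicontinuous directions, then assembling Propositions~\ref{p:equi}, \ref{p:alldir}, \ref{p:eqan}, \ref{p:periodicorbit}, Corollaries~\ref{c:obliq}, \ref{c:aleq}, \ref{c:sensinfini}), but it contains one genuine error, in Class~\ref{i:class3}. You claim that the periodic orbit constituting $\tilde\omega_{F,h}$ must be a single fixed point, deducing this from Corollary~\ref{c:fini} together with the fact that two distinct monochrome configurations of one $F$-cycle are never asymptotic. That inference is a non sequitur: Corollary~\ref{c:fini} decomposes $\AN[F,h]{\tilde\omega_{F,h}}$ into the nonempty, pairwise disjoint asymptotic classes of the elements of the orbit, and the non-asymptoticity of distinct cycle elements only says that these classes are disjoint; it produces no contradiction with a cycle of length greater than one. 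The conclusion is in fact false: Example~\ref{e:martin} (from \cite{Boyer}) exhibits a CA in Class~\ref{i:class3} whose generic limit set is a \emph{nontrivial} $F$-orbit of a monochrome configuration --- this is exactly why the theorem states $\tilde\omega_{F,h}=\orb_F(\dinf a)$ rather than $\{\dinf a\}$, and why the remark after Proposition~\ref{p:periodicorbit} insists that generic configurations are asymptotic to a single configuration of that orbit although the orbit itself may be nontrivial. The paper's own argument never needs (and never makes) this claim: it shows $\E_{F,h}\subseteq\AN[F,h]{(\dinf{a_{u,h'}(t)})_t}$ via Propositions~\ref{p:alldir} and~\ref{p:eqan}, deduces from Corollary~\ref{c:aleq} that $\tilde\omega_{F,h}$ is contained in the finite set of limit points of that sequence of monochrome configurations, and then invokes Proposition~\ref{p:periodicorbit}, which already yields that $\asym[F,h]{\dinf a}$ is comeager without any triviality of the orbit. (Incidentally, $(\dinf{a_{u,h'}(t)})_t$ is not itself an $F$-orbit in general; only the finiteness of its set of limit points is used.)

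The other place where you depart from the paper is the point you yourself flag as delicate: $\tilde\omega_{F,h}=\Omega_F$ along the sensitive directions in Class~\ref{i:class2}. The paper disposes of this in one line: since $F$ is not nilpotent, no iterate is constant, so Remark~\ref{r:eqlyapu} forces any direction of equicontinuity to satisfy $h'\sim-\rpm-\sim-\rpm+$; hence every $h\not\sim h'$ is oblique, and Corollary~\ref{c:obliq} gives at once both sensitivity and $\tilde\omega_{F,h}=\Omega_F$ (with $\Omega_{F,h}=\Omega_F$ by Remark~\ref{r:omega}). Your substitute --- that a CA equicontinuous along $h'$ reaches $\Omega_F$ in finitely many steps and acts bijectively on it, whence semi-nonwanderingness in every direction --- rests on a structural fact that is classical only for the vertical (or rational linear) direction and is established nowhere in the paper for an arbitrary curve $h'$ with bounded variation; and the parenthetical justification of semi-nonwanderingness (``a finite-time image of a small ball is revisited cofinally often along $h$'') would still require a real argument, e.g.\ a case distinction according to whether $h$ is bounded on the residue classes of the eventual period. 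As written this step is a gap, and it is avoidable: obliqueness of all other directions is precisely the tool Remark~\ref{r:eqlyapu} and Corollary~\ref{c:obliq} provide.
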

	Compared to \cite[Thm~2.9]{Martin}, we have merged the last two classes, because expansiveness is not relevant in terms of generic limit set, except that it implies surjectivity.
	Surjective CA have their generic limit set equal to the full shift of configurations in every direction: they are either in Class~\ref{i:class2}, Class~\ref{i:class4} or Class~\ref{i:class5}.
	In Class~\ref{i:class3}, each bound of the interval of almost equicontinuity can be included in it or not; actually all four cases can happen: see \cite{Martinthese} for some examples (the bound would be included if one allows directions with unbounded variation).
	\begin{proof}
		\begin{enumerate}
			\item Nilpotent CA have a trivial limit set (in every direction), which includes the generic limit set. 
			\item Now suppose that $F$ is not nilpotent, which is equivalent to $\Omega_F$ being infinite.
			Assume also that there is at least one direction of equicontinuity.
			By Remark~\ref{r:eqlyapu}, all other directions are oblique, hence sensitive by Corollary~\ref{c:obliq}.
			By Proposition~\ref{p:equi} and Corollary~\ref{c:obliq}, $\tilde\omega_{F,h}=\Omega_F$, for all $h\in\BF$.
			\item Now suppose that $F$ admits no direction of equicontinuity, but two distinct directions $h'$ and $h''$ of almost equicontinuity.
			By Point~\ref{i:common} of Remark~\ref{r:blocking}, these two directions have a common blocking word $u$, so that we can apply Proposition~\ref{p:alldir}: $u$ is also blocking for directions in the interval $[\min(h',h''),\max(h',h'')]$.
			Since this is true for every almost equicontinuous $h',h''$, we deduce that the set $S$ of almost equicontinuous directions is convex: it is a nondegenerate interval.
			By Corollary~\ref{c:obliq}, it is included in $[-\rpm+,-\rpm-]$.
			\\
			Proposition~\ref{p:alldir} also states that $\AN[F,h]{(\dinf{a_{u,h'}(t)})_t}$ is dense for every $h\in\BF$, so that it includes all equicontinuous points, by Proposition~\ref{p:eqan}.
			If $h\in S$, Corollary~\ref{c:aleq} gives that the generic limit set is the closure of the asymptotic set of $\E_{F,h}$, that is then the asymptotic set of $\sett{\dinf{a_{u,h'}(t)}}{t\in\N}$, which is a set of monochrome configurations; in particular, it does not depend on $h$.
			By Proposition~\ref{p:periodicorbit}, it is the orbit by $F$ of a monochrome configuration. 
			\\
			For other directions, the generic limit set of a sensitive DS is infinite, by Corollary~\ref{c:sensinfini}.
			\\
			Finally, if $h\in\oin S$, it is easy to find $q',q''\in S$ such that $q'\pprec h\pprec q''$.
			Point~\ref{i:u} of Proposition~\ref{p:alldir} gives that $\AN[F,h]{(\dinf{a_{u,q'}(t)})_t}$ contains a dense open set.
			The same argument as above gives that this is in the realm of the generic limit set. 
			%
			\item The cases remain when there is at most one direction of almost equicontinuity; it cannot be oblique, and other directions have to all have infinite generic limit by Corollary~\ref{c:sensinfini}. This settles the last three classes.
			\popQED\end{enumerate}\end{proof}
	In the following examples, we will meet all 6 classes from the previous classification. 
	\begin{exmp}[Shift]\label{e:sigm}
		Let $\sigma$ be the CA over $A^\Z$ defined by $\sigma(x)_i=x_{i+1}$.
		This CA is reversible, hence surjective. By Corollary \ref{c:surjective}, $\tilde\omega_{\sigma,h}=A^\Z$ along all $h\in\BF$. Along direction ${-1}$, it corresponds to the identity CA. This CA has only one equicontinuous direction: it is in Class~\ref{i:class2} of Theorem~\ref{t:class}.
	\end{exmp}
	
	Let $F$ be a CA with alphabet $A$, memory $r_-\in\Z$, anticipation $r_+>r_-$ and local rule $f$. A state $0\in A$ is \dfn{spreading} if for all $u\in A^{r_+-r_-+1}$ such that $0\sqsubset u$, one has $f(u)=0$.
	\begin{rmq}\label{r:spreading}
		Let $F$ be a 
		CA over $A^\Z$ with memory $r_-\in\Z$, anticipation $r_+>r_-$, and spreading state $0\in A$.
		Then $\tilde\omega_{F,h}=\{\dinf{0}\}$ along all $h\in [{-\rpm+},{-\rpm-}]$ and $\tilde\omega_{F,h}=\Omega_F$ along all $h\notin [{-\rpm+},{-\rpm-}]$, where $\rpm+(t)=r_+t$ and $\rpm-(t)=r_-t$.
		In other words, $F$ is in Class~\ref{i:class3} or Class~\ref{i:class1}.
	\end{rmq}
	Note that any CA is a subsystem of a CA with a spreading state (simply by artificially adding it to the alphabet).
	In particular, unlike the asymptotic set (which includes the nonwandering set), the generic limit set does not support the topological entropy (in the sense of \cite{bowen}).
	\begin{proof}
		We suppose that $F$ has a spreading state $0\in A$. By definition, it is a (left- and right-) blocking word along all $h\in [{-\rpm+},{-\rpm-}]$. By Proposition~\ref{p:alldir}, there exists $a\in A$ such that $\tilde\omega_{F,h}=\orb_F(\dinf a)=\cl{\omega(\E_{F,h})}$ along all $h\in [{-\rpm+},{-\rpm-}]$. In this case, $a$ is nothing else than $0$ (from the definition of $a_{u,h}(t)$). Moreover, since any $h\notin [{-\rpm+},{-\rpm-}]$ is oblique, $\tilde\omega_{F,h}=\Omega_F$ by Corollary~\ref{c:obliq}.
	\end{proof}
	
	\begin{figure}[ht]
		\centering
		\includegraphics[width=6 cm]{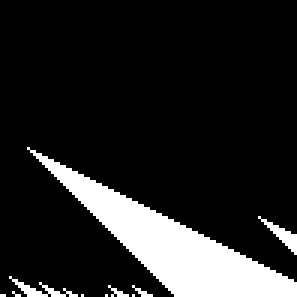}
		\caption{
			{$\Min$ CA along direction $1$};
			$0$ \resp {$1$} are represented by black squares \resp{white squares}.}
		\label{fig:oblique}
	\end{figure}
	
	The simplest example of spreading state is the following.
	\begin{exmp}[$\Min$]\label{e:sigmin}
		Following Example~\ref{e:min}, we define $\Min$ over $\{0,1\}^\Z$ by $\Min(x)_i=\min (x_i,x_{i+1})$.
		A typical space-time diagram is represented in Figure~\ref{fig:oblique}.
		By Remark~\ref{r:spreading}, we know that this CA is in Class~\ref{i:class3}:
		\begin{enumerate}
			\item\label{i:unidmin} Along direction ${0}$:
			\\
			$\tilde\omega_{\Min,h}=\{\dinf{0}\}$ and its realm $\bigcap_{k\in\Z}\sett{x\in \{0,1\}^\Z}{\exists i\geq k,x_i=0}$ is comeager.
			\item Along direction $-1$: the same is true, replacing $\geq$ by $\leq$.
			\item\label{i:Min} Along directions in $]-1,0[$: $\tilde\omega_{\Min,h}=\{\dinf{0}\}$ and its realm is the dense open set $\{0,1\}^\Z\setminus\{\dinf1\}$.
			Note that along direction ${-1}/{2}$: $\Min$ corresponds, up to a power $2$, to the \emph{three-neighbor $\Min$ CA}, defined by $\sigma^{-1}\Min^2(x)_i=\min (x_{i-1},x_i,x_{i+1})$.
			This is an example of uniform DS whose generic limit set does not include all recurrent points.
			\item\label{i:oblmin} Any direction $h\notin [{-1},0]$ is \emph{oblique}: $\tilde\omega_{\Min,h}=\Omega_\Min=\sett{x\in \{0,1\}^{\Z}}{\forall k>0, 10^k1\not\sqsubset x}$.
			The realm of the generic limit set is $\{0,1\}^\Z$.
			This is an example of uniform DS whose set of recurrent points is not dense in the generic limit set.
			\popQED\end{enumerate}\end{exmp}
	
	\begin{exmp}[Lonely gliders]\label{e:lgliders2}
		
		
		
		The CA $F$ from Example~\ref{e:lgliders} is surjective and almost equicontinuous.
		Hence this CA is in Class~\ref{i:class4p}. 
		
	\end{exmp}
	\begin{exmp}[Finite generic limit set]\label{e:onedirection}
		Consider the CA $\Min\times\sigma^{-1}\Min$ defined over $(\{0,1\}^\Z)^2$ by $(\Min\times\sigma^{-1}\Min)(x,y)_i=(\min(x_i,x_{i+1}),\min(y_{i-1},y_i))$.
		According to Example~\ref{e:sigmin}, $\tilde\omega_{\Min,h}=\{\dinf{0}\}$ along all $h\in [{-1},0]$ and $\tilde\omega_{\Min,h}=\Omega_\Min$ along all $h\notin [{-1},0]$; of course $\tilde\omega_{\sigma^{-1}\Min,h}=\{\dinf{0}\}$ along all $h\in [{0},1]$ and $\tilde\omega_{\sigma^{-1}\Min,h}=\Omega_\Min$ along all $h\notin [{0},1]$. Hence, $\tilde\omega_{\Min\times\sigma^{-1}\Min,0}=\{\dinf{0}\}^2$, and $\tilde\omega_{\Min\times\sigma^{-1}\Min,h}= \{\dinf{0}\}\times\Omega_\Min$ \resp{$\tilde\omega_{\Min\times\sigma^{-1}\Min,h}=\Omega_\Min\times\{\dinf{0}\}$} along all $h\in [{-1},0[$ \resp{$h\in]0,1]$}, and $\tilde\omega_{\Min\times\sigma^{-1}\Min,h}=\Omega_\Min^2$ along all $h\notin[{-1},1]$.
		In particular, $\Min\times\sigma^{-1}\Min$ has only one almost equicontinuous direction: $\{0\}$. Hence, it is in Class~\ref{i:class4p}'.
	\end{exmp}
	
	The next two examples are in Class~\ref{i:class5}.	
	\begin{exmp}[Sensitivity in every direction]
		Consider the CA $\sigma^{-1}\Min\times\sigma$ defined over $(\{0,1\}^\Z)^2$ by $(\sigma^{-1}\Min\times\sigma)(x,y)_i=(\min(x_{i-1},x_i),y_{i+1})$. 
		According to Example~\ref{e:sigm}, $\tilde\omega_{\sigma,h}=\{0,1\}^\Z$ along all $h\in\BF$.
		According to Example~\ref{e:sigmin}, $\tilde\omega_{\sigma^{-1}\Min,h}=\{\dinf{0}\}$ along all $h\in[0,1]$ and $\tilde\omega_{\sigma^{-1}\Min,h}=\Omega_\Min$ along all $h\notin[0,1]$.
		Hence, $\tilde\omega_{\sigma^{-1}\Min\times\sigma,h}=\{\dinf{0}\}\times\{0,1\}^\Z$ along all $h\in[0,1]$, and $\tilde\omega_{\sigma^{-1}\Min\times\sigma,h}=\Omega_\Min\times\{0,1\}^\Z$ along all $h\notin[0,1]$.
		Since there is no almost equicontinuous direction, this CA is in Class~\ref{i:class5}.
	\end{exmp}
	
	\begin{figure}[ht]
		\centering
		\includegraphics[width=6 cm]{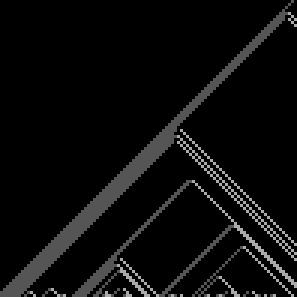}
		\caption{Just Gliders
			($\leftarrow$ are represented by light grey squares and $\rightarrow$ are represented by dark grey squares).}
		\label{fig:glider}
	\end{figure}
	\begin{exmp}[Just Gliders] \label{e:gliders}
		Let $A=\{\leftarrow,0,\rightarrow\}$ and $F$ the CA defined by the following local rule:
		\[f(x_{i-1},x_i,x_{i+1})_i=\soit{
			\rightarrow   &   \si x_{i-1}=\rightarrow \et x_i\ne\leftarrow \et (x_{i+1}\ne\leftarrow \oub x_i=\rightarrow)\\ 
			\leftarrow  &   \si x_{i+1}=\leftarrow \et x_i\ne\rightarrow \et (x_{i-1}\ne\rightarrow \oub x_i=\leftarrow)\\ 
			0   &   \sinon~.}\]
		
		A typical space-time diagram of this CA is shown in Figure~\ref{fig:glider}.
		It is possible to interpret it as a background of $0$s where particles $\rightarrow$ and $\leftarrow$ go to the right and to the left, respectively. When two opposite particles meet they disappear.
		
		One can see 
		that the limit set is $\Omega_F=\sett{x\in A^{\Z}}{\forall k\in\N, \leftarrow0^k\rightarrow\not\sqsubset x}.$
		We prove here that $F$ is weakly semi-mixing in every direction $h\notin\{-1,+1\}$. Hence $\tilde\omega_{F,h}=\Omega_{F,h}$.
		Moreover, $\tilde\omega_{F,-1}=\{0,\rightarrow\}^\Z$, $\tilde\omega_{F,+1}=\{0,\leftarrow\}^\Z$, and $F$ is sensitive in every direction; it is in Class~\ref{i:class5}.
	\end{exmp}
	Similar results have been proved in the measure-theoretical setting in \cite{Maass}.
	The simplicity of the following proof illustrates the power of our setting, and of the notion of semi-mixingness.
	\begin{proof}
		By induction on $t\in\N$, one can see that $F^t(x)_k=\rightarrow$ if and only if $x_{-t+k}=\rightarrow$ and $u=x_{\cc{-t+k+1}{t+k}}$ is a \dfn{right-balanced} pattern, that is, it does not \emph{send} any particle to the left, or more formally:
		\[\forall j\in\co0{\len u}, \sum_{i=0}^j \gamma(u_i)\ge0~, \ou\gamma(u_i)=\soit{
			+1   &   \si u_{i}=\rightarrow \\ 
			0  &   \si u_{i}=0\\
			-1   &   \si u_{i}=\leftarrow}.\]
		Generalizing this induction, we can see that if $k\in\Z$, $t\in\N$, $\leftarrow\not\sqsubset w$ and $u\in A^{2t}$ is right-balanced, then $F^t([wu]_k)\subseteq[w]_{k+t}$.
		We define \dfn{left-balanced} patterns symmetrically, and get that if $\rightarrow\not\sqsubset z$ and $u$ is right-balanced, then $F^t([uz]_k)\subseteq[z]_{k+t}$.
		
		
		
		\begin{itemize}
			\item Let $[u]_m$, $[u']_{m'}$, $[v]_{n}$ and $[v']_{n'}$ be four cylinders, and assume that the last two intersect $\Omega_F$.
			By the expression of $\Omega_F$, note that we can decompose them as $v=wz$, $v'=w'z'$, with $\leftarrow\not\sqsubset w,w'$ and $\rightarrow\not\sqsubset z,z'$. 
			We prove that there is a time step $t\in\N$ such that both $F^t\sigma^{h(t)}([u]_m)\cap[v]_{n}\ne\emptyset$ and $F^t\sigma^{h(t)}([u']_{m'})\cap[v']_{n'}\ne\emptyset$.
			If $h$ is oblique, the result follows from Proposition~\ref{p:obliktrans}, hence we can assume that $h\in]-1,+1[$.
			\\
			We can assume that $u$ and $u'$ are left- and right-balanced pattern: just extend it with the suitable number of $\rightarrow$ on the left or of $\leftarrow$ on the right (the obtained cylinders are included in the original one). { We can also add $0$s to $u$ and $u'$ until being able to assume that they have the same length and that $m=m'$.}
			\\
			Since $h\succ-1$, there exists $t\in\N$ such that $h(t)>-t+\max(n+\len w,n'+\len{w'})-m$. 
			Since $h\prec1$, there exists $t\in\N$ such that $h(t)< t+\min(n+\len w,n'+\len{w'})-m-\len u$. 
			These $t$ could be distinct, but it is not difficult to be convinced that, since $h$ has bounded variation, there is a common $t\in\N$ which satisfies both.
			In that case we can define $\tilde u=0^{t-n+h(t)-\len w+m}u0^{t-m-\len u+n-h(t)+\len w}$.
			Clearly, it is still left- and right-balanced, so that $F^t\sigma^{h(t)}([w\tilde u]_{n-h(t)-t})\subseteq[w]_n$
			and $F^t\sigma^{h(t)}([\tilde uz]_{n-h(t)+\len w-t})\subseteq[z]_{n+\len w}$; taking the intersection we get $F^t\sigma^{h(t)}([w\tilde uz]_{n-h(t)-t})\subseteq[v]_n$.
			Moreover, $[w\tilde uz]_{n-h(t)-t}\subseteq[\tilde u]_{n-h(t)-t+\len w}\subseteq[u]_m$; we get the wanted nonempty intersection.
			The exact same can be achieved for $[u']_m$ and $[v']_{n'}$ for the same $t$.
			%
			%
			\item
			Proposition~\ref{p:snwand} and Remark~\ref{r:obliksens} then give that for every $h\notin\{-1,+1\}$, $\tilde\omega_{F,h}=\Omega_F$ and $F$ is sensitive.
			Since the set of almost equicontinuous directions is an interval, then at least one direction in $\{-1,+1\}$ should also be sensitive.
			Since the definition of the local rule is exactly symmetric, we get that both directions are also sensitive.
			\item
			Now consider direction $h=+1$.
			For $i\in\N$, let $W_i$ be the set of configurations $x\in A^\Z$ such that $x_{\cc1i}$ is not right-balanced.
			If $x\in W_i$, then by definition $x_{\cc1t}$ is not right-balanced, for $t\ge i$, so that, by the first claim of the proof, $F^t\sigma^{t}(x)_0\ne\rightarrow$.
			Since every pattern can be extended to the right into a pattern which is not right-balanced, we see that $W=\bigcup_{i\in\N}W_i$ is a dense open set.
			We get that $\omega_{F,1}(W)\cap[\rightarrow]=\emptyset$.
			Hence $\omega_{F,1}(\bigcap_{n\in\Z}\sigma^n(W))\subseteq\{0,\leftarrow\}^\Z$.
			$\bigcap_{n\in\Z}\sigma^n(W)$ being comeager, we get that $\tilde\omega_{F,1}\subseteq\{0,\leftarrow\}^\Z$.
			Conversely, for every cylinders $[u]_m$ and $[v]_n$, the latter intersecting $\{0,\leftarrow\}^\Z$, the same argument above allows to find arbitrarily large $t\in\N$ such that $F^t([u]_m)$ intersects $[v]_n$, so that 
			{Point~\ref{i:clos} of Remark~\ref{r:geninter}} gives that $[v]_n$ intersects the generic limit set.
			\item
			The exact symmetric argument settles the case of $h=-1$.
			\popQED\end{itemize}\end{proof}

	The following two examples enjoy additional properties that are counter-intuitive, that we state; for a full understanding of these constructions, we refer to the original articles, because each of them could fit a whole article by itself.
	\begin{exmp}\label{e:martin}
		In \cite[Thm~6.1]{Boyer}, a CA $F$ is built with a word $u$ which is blocking in a nondegenerate interval of directions, and $\tilde\omega_F=\omega_F([u]_0)$ is the orbit of a monochrome configuration (see Proposition~\ref{p:alldir}), but here this orbit is nontrivial: in particular, $\tilde\omega_{F^2}=\Omega_{F^2,\mu}\subsetneq\Omega_{F,\mu}=\tilde\omega_F$.
		\\
		This shows that CA which have a finite generic limit set
		are not always \emph{generically nilpotent}: they can converge to a nontrivial orbit.
		This property is possible only if the blocking words are Gardens of Eden.
	\end{exmp}
	
	The following example shows that it is relevant to study arbitrary curves rather than just linear directions.
	\begin{exmp}\label{e:parabole}
		In \cite[Prop~3.3]{Martin}, a CA is built which is almost equicontinuous along $h$ if and only if $0\prec h\prec p$, where $p$ is an explicit function close to the square root function.
		There is a nondegenerate interval of almost equicontinuous directions, but only one of them is linear.
		
		With some horizontal bulking operation and a product with the CA from \cite[Prop~3.1]{Martin}, which deals with the other side of a parabola, one can even obtain a CA which is still almost equicontinuous in two directions of opposite sign,
		but which is sensitive along all linear directions.
	\end{exmp}
	
	\section{Links with the measure-theoretical approach}\label{s:measure}
	By a \dfn{measure}, we mean a Borel probability measure on $X$. The topological \dfn{support} $\supp{\mu}$ of a measure $\mu$ is the smallest closed subset of measure $1$. If $\supp{\mu}=X$, we say that $\mu$ has \dfn{full support}. 
	
	We say that $\FF=(F_t)_{t\in\N}$ is \dfn{$\mu$-equicontinuous} if $\mu(\E_\FF)=1$.
	Proposition~\ref{p:wall} and Corollary~\ref{c:comeager} give that, if $\mu$ is $\sigma$-ergodic, a CA $F$ along some direction $h$ is $\mu$-equicontinuous unless $\E_{F,h}\cap\supp\mu=\emptyset$.
	This generalizes \cite[Prop~3.5]{Gilman}, which was stated only for Bernoulli measures.
	
	\subsection{$\mu$-likely limit set and $\mu$-limit set}
	The generic limit set is the topological variant of the \dfn{$\mu$-likely limit set} $\Lambda_{\FF,\mu}$, which is the smallest closed subset of $X$ that has a realm of attraction of measure one.
	\cite[Ex~5, 6]{milnor} point that there are no general inclusion relations between the two sets, but that they intersect.
	Here is a formalization of this argument.
	\begin{prop}
		For every DS $\FF$ and full-support measure $\mu$, $\Lambda_{\FF,\mu}\cap\tilde\omega_\FF\ne\emptyset$.
	\end{prop}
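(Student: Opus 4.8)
The plan is to exploit that $\Lambda_{\FF,\mu}$ is, by its very definition, a closed set whose realm of attraction has $\mu$-measure one, and to combine this with the full-support hypothesis: a $\mu$-conull set must then be dense, and for closed sets with dense realm we already know, by Proposition~\ref{p:densinter}, that they meet $\tilde\omega_\FF$.

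So first I would check that $\AN[\FF]{\Lambda_{\FF,\mu}}$ is dense in $X$. By definition of the $\mu$-likely limit set, $\AN[\FF]{\Lambda_{\FF,\mu}}$ has measure one, so its complement is $\mu$-null; since $\mu$ has full support, every nonempty open subset of $X$ has positive measure, hence no nonempty open set can be contained in this null complement. Therefore $\AN[\FF]{\Lambda_{\FF,\mu}}$ meets every nonempty open set, i.e.\ it is dense. (Implicitly this uses that $\AN[\FF]{\Lambda_{\FF,\mu}}$ is Borel, which holds because, the set $\Lambda_{\FF,\mu}$ being closed, Point~\ref{i:app} of Remark~\ref{r:union} writes it as $\bigcap_{n>0}\FN[\FF]{\B_{1/n}(\Lambda_{\FF,\mu})}$; this is also what makes $\Lambda_{\FF,\mu}$ itself well defined, by the measure-theoretic analogue of Proposition~\ref{p:exist}, a countable intersection of sets of measure one having measure one.)

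Then I would simply apply Proposition~\ref{p:densinter} to the closed set $V=\Lambda_{\FF,\mu}$, whose realm has just been shown to be dense: it yields $\Lambda_{\FF,\mu}\cap\tilde\omega_\FF\ne\emptyset$, as desired. I do not expect any real obstacle; the only delicate point is that the hypothesis of Proposition~\ref{p:densinter} is the density of the realm \emph{itself} (not of some dense set it attracts), and that the step from ``measure one'' to ``dense'' genuinely requires full support --- on the topological side, the analogous step ``comeager $\Rightarrow$ dense'' comes for free only thanks to the Baire category theorem.
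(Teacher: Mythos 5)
Your proposal is correct and follows essentially the same route as the paper: observe that $\Lambda_{\FF,\mu}$ is a closed set whose realm, having measure one under a full-support measure, is dense, and then invoke Proposition~\ref{p:densinter}. The only difference is that you spell out the full-support and measurability details that the paper leaves implicit, which is fine.
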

	\begin{proof}
		Remark that $\Lambda_{\FF,\mu}$ is a closed set with dense (because measure-1) realm. Proposition \ref{p:densinter} allows to conclude.
	\end{proof}
	
	The $\mu$-likely limit set should not be confused with the \dfn{$\mu$-limit set} $\Omega_{\FF,\mu}$, from \cite{Maass,Martin}, which is the intersection of all closed subsets $U$ such that $\lim_{t\to\infty}\mu(F_t^{-1}(U))=1$.
	We prove one general inclusion, though; it is a generalization of \cite[Prop~1]{Maass}.
	\begin{prop}\label{p:mulik}
		For every DS $\FF$ and Borel probability measure $\mu$, $\Omega_{\FF,\mu}\subseteq\Lambda_{\FF,\mu}$.
	\end{prop}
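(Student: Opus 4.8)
The plan is to compare the two sets through their defining neighbourhoods. Since $\Lambda_{\FF,\mu}$ is closed, one has $\Lambda_{\FF,\mu}=\bigcap_{\varepsilon>0}\cl{\B_\varepsilon(\Lambda_{\FF,\mu})}$, and each $\cl{\B_\varepsilon(\Lambda_{\FF,\mu})}$ is closed; so, by the very definition of $\Omega_{\FF,\mu}$ as the intersection of all closed $U$ with $\lim_{t\to\infty}\mu(F_t^{-1}(U))=1$, it will suffice to prove that $\lim_{t\to\infty}\mu(F_t^{-1}(\cl{\B_\varepsilon(\Lambda_{\FF,\mu})}))=1$ for every $\varepsilon>0$: this forces $\Omega_{\FF,\mu}\subseteq\cl{\B_\varepsilon(\Lambda_{\FF,\mu})}$ for all $\varepsilon>0$, hence $\Omega_{\FF,\mu}\subseteq\Lambda_{\FF,\mu}$.

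First I would record that, exactly as in Proposition~\ref{p:exist}, the $\mu$-likely limit set itself has a realm of measure one, that is, $\mu(\AN[\FF]{\Lambda_{\FF,\mu}})=1$ (using Point~\ref{i:inter} of Remark~\ref{r:union} and a countable basis of closed sets, together with the fact that a countable intersection of sets of measure one has measure one). Fix $\varepsilon>0$. Since $\Lambda_{\FF,\mu}$ is closed, Point~\ref{i:app} of Remark~\ref{r:union} gives $\AN[\FF]{\Lambda_{\FF,\mu}}\subseteq\FN[\FF]{\B_\varepsilon(\Lambda_{\FF,\mu})}=\bigcup_{T\in\N}A_T$, where $A_T=\bigcap_{t\ge T}F_t^{-1}(\B_\varepsilon(\Lambda_{\FF,\mu}))$; hence $\mu(\bigcup_{T\in\N}A_T)=1$. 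The sequence $(A_T)_{T\in\N}$ is nondecreasing and consists of Borel sets, so by continuity of $\mu$ from below $\mu(A_T)\to1$. Given $\eta>0$, choose $T$ with $\mu(A_T)>1-\eta$; then for every $t\ge T$ we have $A_T\subseteq F_t^{-1}(\B_\varepsilon(\Lambda_{\FF,\mu}))\subseteq F_t^{-1}(\cl{\B_\varepsilon(\Lambda_{\FF,\mu})})$, so $\mu(F_t^{-1}(\cl{\B_\varepsilon(\Lambda_{\FF,\mu})}))>1-\eta$. As $\eta>0$ was arbitrary, $\lim_{t\to\infty}\mu(F_t^{-1}(\cl{\B_\varepsilon(\Lambda_{\FF,\mu})}))=1$, which is what was needed.

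I do not expect a genuine obstacle here; the argument is a short exercise in continuity of measure, in the spirit of Remark~\ref{r:union}. The only points deserving a line of care are measurability — the sets $\AN[\FF]{\Lambda_{\FF,\mu}}$ and $A_T$ are Borel, being obtained from preimages of open or closed sets under the continuous maps $F_t$ by countable unions and intersections (so in the definition of $\AN[\FF]{\cdot}$ one restricts $\varepsilon$ to reciprocals of integers) — and the fact that $\Omega_{\FF,\mu}$ is built from \emph{closed} test sets, which is why one must pass from the open neighbourhoods $\B_\varepsilon(\Lambda_{\FF,\mu})$ to their closures before invoking the definition of $\Omega_{\FF,\mu}$.
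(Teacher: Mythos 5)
Your proof is correct and takes essentially the same route as the paper: the same reduction to showing $\lim_{t\to\infty}\mu(F_t^{-1}(\cl{\B_\varepsilon(\Lambda_{\FF,\mu})}))=1$ for every $\varepsilon>0$, and the same underlying continuity-of-measure argument resting on the fact that $\AN[\FF]{\Lambda_{\FF,\mu}}$ has measure one. The only differences are presentational: you argue directly, via the increasing sets $A_T=\bigcap_{t\ge T}F_t^{-1}(\B_\varepsilon(\Lambda_{\FF,\mu}))$ and continuity from below, where the paper argues by contradiction using the decreasing sets $\bigcup_{t>T}F_t^{-1}(\compl{\cl{\B_\varepsilon(\Lambda_{\FF,\mu})}})$ and continuity from above, and you make explicit the measure-theoretic analogue of Proposition~\ref{p:exist} that the paper uses implicitly.
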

	\begin{proof}
		It is enough to prove that, for every $\varepsilon>0$, $\lim_{t\to\infty}\mu(F_t^{-1}(\cl\B_\varepsilon(\Lambda_{\FF,\mu})))=1$. 
		Suppose, for the sake of contradiction, that it is not the case: there is $\alpha>0$ and an infinite set $I\subseteq\N$ such that for all $t\in I$, $\mu(A_t)\ge\alpha$, where $A_t=F_t^{-1}(\compl{\cl\B_\varepsilon(\Lambda_{\FF,\mu})})$.
		For every $T\in\N$, consider the set $B_T=A_T\setminus\bigcup_{t>T}A_t$ of points which are \emph{for the last time} in some $A_T$.
		Note that the set of those which are \emph{still expecting one visit}, $C_T=\bigcup_{t\in\N}A_t\setminus\bigsqcup_{t\le T}B_t=\bigcup_{t>T}A_t$, includes $A_t$ for some $t\in I$.
		It results that $\mu(C_T)\ge\mu(A_t)\ge\alpha$.
		Since $(C_T)_T$ is a decreasing sequence of subsets, we get that $\mu(C)\ge\alpha$, where $C=\bigcap_{T\in\N}C_T$.
		Now if $x\in C$, then for all $T\in\N$, there exists $t>T$ such that $F_t(x)\in\compl{\cl\B_\varepsilon(\Lambda_{\FF,\mu})}$.
		This means that $x\notin\AN[\FF]{\Lambda_{\FF,\mu}}$, so that $\AN[\FF]{\Lambda_{\FF,\mu}}$ has measure at most $1-\alpha$.
	\end{proof}
	The converse is in general false: Example~\ref{e:gliders} is a counter-example, as proved in \cite{Maass}; it is known to have a $\mu$-limit set which is strictly included in the $\mu$-likely limit set, when $\mu$ is the uniform Bernoulli measure (see \cite[Ex~3]{Maass} and \cite[Ex~4]{KuMa02}). 
	
	However, we prove the converse in a specific case.
	\begin{prop}
		Let $\FF$ be a $\mu$-equicontinuous DS for some measure $\mu$.
		Then $\Omega_{\FF,\mu}=\Lambda_{\FF,\mu}=\cl{\omega_\FF(\E_\FF\cap\supp\mu)}$.
	\end{prop}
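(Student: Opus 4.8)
The plan is to sandwich the three sets. Write $Z=\cl{\omega_\FF(\E_\FF\cap\supp\mu)}$, which is closed by construction, and recall that Proposition~\ref{p:mulik} already gives $\Omega_{\FF,\mu}\subseteq\Lambda_{\FF,\mu}$. Hence it suffices to prove the two inclusions $\Lambda_{\FF,\mu}\subseteq Z$ and $Z\subseteq\Omega_{\FF,\mu}$; together with Proposition~\ref{p:mulik} these close a cycle $Z\subseteq\Omega_{\FF,\mu}\subseteq\Lambda_{\FF,\mu}\subseteq Z$, forcing all three to coincide.

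For $\Lambda_{\FF,\mu}\subseteq Z$, I would argue that $Z$ is a closed set whose realm has measure one. Indeed, for every $x\in\E_\FF\cap\supp\mu$ one has $\omega_\FF(x)\subseteq\omega_\FF(\E_\FF\cap\supp\mu)\subseteq Z$, so $x\in\AN[\FF]Z$; thus $\AN[\FF]Z\supseteq\E_\FF\cap\supp\mu$. Since $\FF$ is $\mu$-equicontinuous, $\mu(\E_\FF)=1$, and $\mu(\supp\mu)=1$ by definition of the support, so $\mu(\E_\FF\cap\supp\mu)=1$; as $\AN[\FF]Z$ is Borel (Point~\ref{i:app} of Remark~\ref{r:union}, $Z$ being closed) and contains a set of measure one, $\mu(\AN[\FF]Z)=1$. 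Minimality of $\Lambda_{\FF,\mu}$ among closed sets with a realm of measure one then yields $\Lambda_{\FF,\mu}\subseteq Z$.

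The main work is the inclusion $Z\subseteq\Omega_{\FF,\mu}$. Fix any closed $U$ with $\lim_{t\to\infty}\mu(F_t^{-1}(U))=1$; I want $Z\subseteq U$, and since $U$ is closed it is enough to show $\omega_\FF(x)\subseteq U$ for every $x\in\E_\FF\cap\supp\mu$. Suppose not: some $y\in\omega_\FF(x)$ lies outside $U$, and pick $\varepsilon>0$ with $d(y,U)>3\varepsilon$. By $\varepsilon$-stability of $x$ there is $\delta>0$ with $d(F_t(x),F_t(z))<\varepsilon$ for all $z\in\B_\delta(x)$ and all $t\in\N$; and since $y$ is a limit point of the orbit of $x$, there are infinitely many $t$ with $d(F_t(x),y)<\varepsilon$. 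For any such $t$ and any $z\in\B_\delta(x)$, the triangular inequality gives $d(F_t(z),y)<2\varepsilon$, hence $d(F_t(z),U)>\varepsilon>0$, i.e. $z\notin F_t^{-1}(U)$. Thus $F_t^{-1}(U)\cap\B_\delta(x)=\emptyset$ for infinitely many $t$, and since $x\in\supp\mu$ gives $\mu(\B_\delta(x))>0$, we get $\mu(F_t^{-1}(U))\le 1-\mu(\B_\delta(x))<1$ for infinitely many $t$, contradicting $\mu(F_t^{-1}(U))\to1$. Hence $\omega_\FF(x)\subseteq U$, so $Z\subseteq U$, and as $U$ ranged over all closed sets with $\mu(F_t^{-1}(U))\to1$, we conclude $Z\subseteq\Omega_{\FF,\mu}$.

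The delicate point is precisely this last step: one must convert the purely measure-theoretic hypothesis $\mu(F_t^{-1}(U))\to1$ into a genuine constraint on individual orbits. The mechanism is the interplay between $\varepsilon$-stability (which controls a whole ball uniformly over all times) and the positivity of $\mu$ on balls centered at points of $\supp\mu$ — this is essentially the argument of Proposition~\ref{p:mulik} run in reverse. Everything else (the two easy inclusions and the assembly of the cycle) is bookkeeping.
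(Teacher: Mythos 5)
Correct, and essentially the paper's own proof: the key step is the same conversion of $\mu(F_t^{-1}(U))\to1$ into a constraint on the orbits of points of $\E_\FF\cap\supp\mu$ via $\varepsilon$-stability together with positivity of $\mu$ on balls centered in $\supp\mu$, combined with minimality of $\Lambda_{\FF,\mu}$ applied to the measure-one realm of $\cl{\omega_\FF(\E_\FF\cap\supp\mu)}$ and with Proposition~\ref{p:mulik}. The only difference is organizational: the paper also proves the inclusion $\cl{\omega_\FF(\E_\FF\cap\supp\mu)}\subseteq\Lambda_{\FF,\mu}$ directly via Proposition~\ref{p:eqan}, whereas you obtain it for free from your cycle of inclusions, which is a harmless (slightly more economical) rearrangement.
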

	In particular, if $\mu$ has full support, then these sets are equal to $\cl{\omega_\FF(\E_\FF)}=\tilde\omega_\FF$.
	\begin{proof}
		Since $\AN[\FF]{\Lambda_{\FF,\mu}}$ is dense in $\supp\mu$,
		it includes the full-measure set $\E_\FF\cap\supp\mu$, thanks to Proposition~\ref{p:eqan}. In the same way as in the proof of Corollary~\ref{c:aleq}, we can quickly deduce the second equality.
		Now let $y\in\cl{\omega_\FF(\E_\FF\cap\supp\mu
			)}$ and $\varepsilon>0$.
		There is a point $x\in\E_\FF\cap\supp\mu
		$, and a subsequence $(t_n)_n$ such that 
		{$\forall n\in\N,F_{t_n}(x)\in\B_{\varepsilon/2}(y)$.}
		By equicontinuity of $x$, there exists $\delta>0$ such that $\forall t\in\N,F_t(\B_\delta(x))\subseteq\B_{\varepsilon/2}(F_t(x))$.
		In particular, for $n\in\N$, we get $F_{t_n}(\B_\delta(x))\subseteq\B_\varepsilon(y)$, so that $\mu(F_{t_n}^{-1}(\B_\varepsilon(y)))\ge\mu(\B_\delta(x))>0$.
		Let $U$ be a closed subset of $\compl{\B_\varepsilon(y)}$. Then for every $n\in\N$, $\mu(F_{t_n}^{-1}(U))\le\mu(F_{t_n}^{-1}(\compl{\B_\varepsilon(y)}))\le 1-\mu(\B_\delta(x))$.
		Since this is positive and independent of $n$, we get that $\mu(F_{t}^{-1}(U))$ does not converge to $1$.
		By contrapositive, we get that every closed $U$ such that $\mu(F_{t}^{-1}(U))$ converges to $1$ should intersect $\B_\varepsilon(y)$, for every $\varepsilon>0$.
		Hence it contains $y$, and we can conclude that $y\in\Omega_{\FF,\mu}$.
		\\
		The converse inclusion comes from Proposition~\ref{p:mulik}.
	\end{proof}
	
	\subsection{$\mu$-likely limit set of cellular automata}
	If $\FF$ is a DS, we say that a measure $\mu$ is \dfn{$\FF$-invariant} if $\mu(F_t^{-1}(U))=\mu(U)$ for all measurable subsets $U$ of $X$ and all $t\in\N$.
	A $\FF$-invariant measure $\mu$ is \dfn{$\FF$-ergodic} if the measure of every strongly $\FF$-invariant set is either $0$ or $1$.
	This is the measure-theoretical counterpart of the topological notion of transitivity, as emphasized by Lemma~\ref{l:comeager}, which gives the following.
	\begin{crl}\label{c:comeager}
		Let $\FF=(F_t)_{t\in\N}$ be a transitive DS, where all $F_t$ are homeomorphisms, $\mu$ be a full-support ergodic measure, and $W\subseteq X$ a strongly $\FF$-invariant subset.
		Then the following are equivalent.
		\begin{enumerate}
			\item $W$ is not meager.
			\item $W$ is comeager.
			\item $W$ has measure $1$.
			\item $W$ has positive measure.
	\end{enumerate}\end{crl}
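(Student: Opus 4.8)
The plan is to obtain the four-way equivalence by combining the purely topological dichotomies of Lemma~\ref{l:comeager} with the measure-theoretic dichotomy supplied by $\FF$-ergodicity, the link between the two coming from full support. For conditions~(3) and~(4) to be meaningful I take it as understood that $W$ is $\mu$-measurable and has the Baire property; this is automatic when $W$ is Borel, as in the intended application where $W=\E_{F,h}$ is a $G_\delta$.

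First I would settle the two ``internal'' equivalences. For $(1)\Leftrightarrow(2)$: the DS $\FF$ is transitive, every $F_t$ is a homeomorphism, and $W$ is strongly $\FF$-invariant with the Baire property, so Point~\ref{i:meagerco} of Lemma~\ref{l:comeager} gives that $W$ is meager or comeager; thus ``$W$ not meager'' and ``$W$ comeager'' coincide. For $(3)\Leftrightarrow(4)$: $W$ is strongly $\FF$-invariant and measurable, so $\FF$-ergodicity forces $\mu(W)\in\{0,1\}$; thus ``$\mu(W)>0$'' and ``$\mu(W)=1$'' coincide.

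The substance is the bridge between the two dichotomies. Here I would use that $\mu$ has full support, so that a nonempty open set has positive measure and, dually, a meager set and a $\mu$-null set are both interior-free. From $(2)$, the complement $\compl W$ --- still strongly $\FF$-invariant, since the $F_t$ are homeomorphisms --- is meager, hence $W$ is dense; from $(3)$, $\compl W$ is $\mu$-null, hence again $W$ is dense. In either case Point~\ref{i:densopen} of Lemma~\ref{l:comeager}, applied to the strongly $\FF$-invariant $W$, leaves two options: either $W$ includes a dense open set, so $W$ is comeager and, by full support, $\mu(W)>0$, and all four conditions hold; or $W$ has empty interior.

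The case where $W$ and $\compl W$ are both dense with empty interior is where I expect the main obstacle to lie, and it is the point at which, for an arbitrary Borel $W$, the statement asks for more than Lemma~\ref{l:comeager} and ergodicity alone can supply --- for instance $\bigcap_{k\in\Z}\sigma^k(O)$, for $O$ a dense open set of non-full measure, is strongly $\sigma$-invariant, comeager, yet $\mu$-null. I would therefore either run the argument for $W$ closed (equivalently open), where Point~\ref{i:densopen} of Lemma~\ref{l:comeager} forces $W=X$ or $W$ nowhere dense, and full support plus ergodicity make the measure follow, so that the equivalence is immediate; or, for the intended application, establish $(2)\Rightarrow(3)$ directly on $\E_{F,h}$, which by Proposition~\ref{p:wall} includes the set of configurations where a blocking word occurs infinitely often on both sides --- a conull event as soon as that word has positive $\mu$-measure --- thereby avoiding the general bridge entirely.
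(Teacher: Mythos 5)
Your two ``internal'' equivalences are exactly the paper's first line: $(1)\Leftrightarrow(2)$ is Point~\ref{i:meagerco} of Lemma~\ref{l:comeager} (and the Baire property/measurability of $W$ that you flag is indeed tacitly assumed there as well), and $(3)\Leftrightarrow(4)$ is just the definition of ergodicity. Where you diverge is the bridge between the two dichotomies: the paper does not argue via density and Point~\ref{i:densopen} of Lemma~\ref{l:comeager}, but instead writes a comeager $W$ as a countable intersection $\bigcap_{n}W_n$ of dense open sets, replaces each $W_n$ by its orbit saturation $\orb_\FF(W_n)$, which is dense, open and invariant, hence of positive measure by full support and of measure $1$ by ergodicity, and concludes that $\mu(W)=1$ (the meager case being handled by passing to the complement).

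The obstacle you isolated is genuine, and it sits exactly at that step of the paper's argument: a comeager set only \emph{contains} a dense $G_\delta$, and even when $W=\bigcap_n W_n$ exactly, saturating gives only $W\subseteq\bigcap_n\orb_\FF(W_n)$, so full measure of the (larger) saturated sets transfers nothing to $W$. Your witness $W=\bigcap_{k\in\Z}\sigma^k(O)$, with $O$ dense open of measure $<1$, is correct: it is Borel, strongly $\sigma$-invariant, comeager, yet $\mu$-null by ergodicity (the classical strongly invariant, comeager, null set of configurations with oscillating Birkhoff averages works as well). So $(2)\Rightarrow(3)$ fails for an arbitrary strongly invariant Borel $W$, and the statement needs extra hypotheses; your closed/open case is sound (there Lemma~\ref{l:comeager} forces $W$ to be nowhere dense or to contain a dense open invariant set, and measure follows), and your plan to handle $\E_{F,h}$ directly through Proposition~\ref{p:wall} plus recurrence of a positive-measure blocking cylinder is the right kind of repair for that application. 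Be aware, though, that the paper also invokes Corollary~\ref{c:comeager} for realms $\AN[\FF]V$ of closed sets (to identify $\tilde\omega_\FF$ with $\Lambda_{\FF,\mu}$); such realms are neither open nor closed, so that use is not covered by your fallback and would require a separate argument exploiting their specific structure rather than strong invariance alone.
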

	\begin{proof}
		The first two points are equivalent thanks to Lemma~\ref{l:comeager}, and the last two by definition of ergodicity.
		Now if $W$ is comeager, it can be written as a countable intersection $\bigcap_{n\in\N}W_n$ of dense open sets $W_n$, and hence as a countable intersection $\bigcap_{n\in\N}\orb_\FF(W_n)$ of dense open $\FF$-invariant sets. By full support, the measure of each of them should be positive, and by ergodicity, it should be $1$, hence that of $W$ also.
		Otherwise $W$ is meager, so that the previous argument holds for its complement.
	\end{proof}
	
	In \cite{milnor}, J.~Milnor asks for a \emph{good} criterion for equality between the likely and generic limit sets.
	Here is at least a criterion, in the case of cellular automata.
	\begin{crl}
		If $\FF$ is a sequence of CA, and $\mu$ is a full-support $\sigma$-ergodic measure, then $\tilde\omega_\FF=\Lambda_{\FF,\mu}$.
	\end{crl}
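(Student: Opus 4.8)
The plan is to show that both $\tilde\omega_\FF$ and $\Lambda_{\FF,\mu}$ are subshifts whose realms of attraction are strongly $\sigma$-invariant Borel sets, and then to pass between the topological condition (comeager realm) and the measure-theoretic one (realm of measure one) through Corollary~\ref{c:comeager}, applied to the shift $\sigma$ --- a transitive dynamical system made of homeomorphisms --- together with the full-support $\sigma$-ergodic measure $\mu$.

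First I would record two generalities. The realm of any closed set $V$ is Borel: by Point~\ref{i:app} of Remark~\ref{r:union}, $\AN[\FF]V=\bigcap_{n>0}\bigcup_{T\in\N}\bigcap_{t\ge T}F_t^{-1}(\B_{1/n}(V))$, and each $F_t^{-1}(\B_{1/n}(V))$ is open since the $F_t$ are continuous. Moreover, if $V$ is a subshift, then $\AN[\FF]V$ is strongly $\sigma$-invariant: the $F_t$ are cellular automata, hence commute with $\sigma$, so $\omega_\FF(\sigma^{\pm1}(x))=\sigma^{\pm1}(\omega_\FF(x))$ (as in Point~\ref{i:shiftinv} of Proposition~\ref{p:intnvdense}), while $\sigma^{\pm1}(V)=V$. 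Consequently Corollary~\ref{c:comeager} applies to $W=\AN[\FF]V$ whenever $V$ is a subshift, and gives the key equivalence: \emph{the realm of a subshift is comeager if and only if it has measure one}.

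It then remains to apply this twice. By Proposition~\ref{p:subsh}, $\tilde\omega_\FF$ is a subshift, and by Proposition~\ref{p:exist} its realm is comeager, hence of measure one by the equivalence; minimality of $\Lambda_{\FF,\mu}$ among closed sets with realm of measure one then yields $\Lambda_{\FF,\mu}\subseteq\tilde\omega_\FF$. Conversely, $\Lambda_{\FF,\mu}$ is itself a subshift: for every $k\in\Z$, iterating Point~\ref{i:shiftinv} of Proposition~\ref{p:intnvdense} gives $\AN[\FF]{\sigma^k(\Lambda_{\FF,\mu})}=\sigma^k(\AN[\FF]{\Lambda_{\FF,\mu}})$, which still has measure one since $\mu$ is $\sigma$-invariant, so minimality forces $\Lambda_{\FF,\mu}\subseteq\sigma^k(\Lambda_{\FF,\mu})$ for all $k\in\Z$, i.e. $\sigma(\Lambda_{\FF,\mu})=\Lambda_{\FF,\mu}$. (Here one uses, exactly as for $\tilde\omega_\FF$ in Proposition~\ref{p:exist}, that $\Lambda_{\FF,\mu}$ genuinely has a realm of measure one, which follows from the same countable-basis argument together with the fact that a countable intersection of measure-one sets has measure one.) Being a subshift with realm of measure one, $\Lambda_{\FF,\mu}$ has comeager realm by the equivalence, so $\tilde\omega_\FF\subseteq\Lambda_{\FF,\mu}$ by minimality of $\tilde\omega_\FF$, and equality follows. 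The only point demanding care is the verification that the two realms lie within the scope of Corollary~\ref{c:comeager} --- being Borel (hence measurable, with the Baire property) and strongly $\sigma$-invariant --- which rests on $\tilde\omega_\FF$ and $\Lambda_{\FF,\mu}$ being subshifts; beyond that I expect no genuine obstacle.
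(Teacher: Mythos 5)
Your proof is correct and takes essentially the same route as the paper: the whole argument rests on the strong $\sigma$-invariance of the relevant realms (Proposition~\ref{p:subsh} and its analogue for $\Lambda_{\FF,\mu}$) together with Corollary~\ref{c:comeager} to identify comeagerness with full measure. The paper compresses this into an equality between the two families of closed sets with strongly $\sigma$-invariant large realms, whereas you spell out the two minimality inclusions and the Borel/invariance checks explicitly --- the same argument in expanded form.
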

	\begin{proof}
		Proposition~\ref{p:subsh} gives that the generic limit set is the intersection of all closed subsets with strongly $\sigma$-invariant comeager realms. The same simple argument shows that the likely limit set is the intersection of all closed subsets with strongly $\sigma$-invariant realms of measure $1$.
		Thanks to Corollary~\ref{c:comeager}, these realms are actually the same, so that the two sets are equal.
	\end{proof}
	It results that all results from the previous sections hold for the $\mu$-likely limit set in that case.
	Actually, even when they are not equal, most results on the generic limit have a parallel result on the likely limit set, which can be proved with the same proof tools.
	

	\section{Conclusion}\label{s:conclusion}
	
	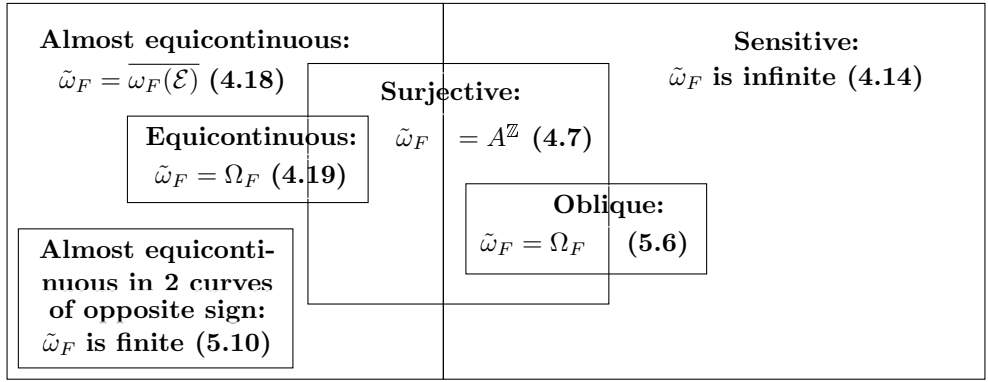
\begin{figure}[ht]\centering
		\begin{tikzpicture}	
		\draw (0,0)--(0,5)--(5.8,5)--(5.8,0)--cycle node[draw=white] at (2.5,4.5) {\bfseries Almost equicontinuous: };
		\node[draw=white] at (2.2,4) {$\tilde{\omega}_F=\cl{\omega_F(\E)}$ \bfseries (\ref{c:aleq})};
		
		\draw (5.8,0)--(13,0)--(13,5)--(5.8,5)--cycle node[draw=white] at (10.5,4.5) {\bfseries Sensitive:};
		
		\node[draw=white] at (10.5,4) {$\tilde{\omega}_F$ \bfseries is infinite (\ref{c:sensinfini})};
		
		\draw (4,1)--(8,1)--(8,4.2)--(4,4.2)--cycle
		node[draw=white] at (5.9,3.8) {\bfseries Surjective:};
		\node[draw=white] at (5.4,3.2) {$\tilde{\omega}_F$};
		
		\node[draw=white] at (6.9,3.2) {$=A^{\Z}$ \bfseries(\ref{c:surjective})};
		
		\draw (0.15,0.15)--(3.8,0.15)--(3.8,2)--(0.15,2)--cycle node[draw=white] at (2,1.7) {\bfseries Almost equiconti-};
		\node[draw=white] at (2,1.3) {\bfseries nuous in 2 curves };
		\node[draw=white] at (2,0.905) {\bfseries of opposite sign:};
		\node[draw=white] at (2,0.465) {$\tilde{\omega}_F$ \bfseries is finite (\ref{t:class})};
		
		\draw (1.6,2.35)--(4.8,2.35)--(4.8,3.5)--(1.6,3.5)--cycle;
		\node[draw=white] at (3.25,3.2) {\bfseries Equicontinuous:};
		\node[draw=white] at (3.25,2.7) {$\tilde{\omega}_F=\Omega_F$ \bfseries(\ref{p:equi})};
		
		\draw (6.1,1.4)--(9.3,1.4)--(9.3,2.6)--(6.1,2.6)--cycle;
		\node[draw=white] at (8,2.3) {\bfseries Oblique: };
		\node[draw=white] at (7,1.8) {$\tilde{\omega}_F=\Omega_F$}; 
		\node[draw=white] at (8.65,1.8) {\bfseries(\ref{c:obliq})};
		\end{tikzpicture}
		\caption  {Summary of the main results.
			Nilpotent CA are not included, for better readability (they would be equicontinuous, almost equicontinuous in 2 directions, oblique, but nonsensitive, and nonsurjective, and have that $\tilde\omega_F$ is a singleton).}
		\label{fig:result}
	\end{figure}
	We studied the generic limit set of dynamical systems, and we emphasized the example of cellular automata. Our main results are:
	\begin{itemize}
		\item The generic limit set of a nonwandering system (in particular, of a surjective CA) is full.
		\item The generic limit set of a semi-nonwandering system (in particular, of an oblique CA) is its limit set.
		\item The generic limit set of an almost equicontinuous system is exactly the closure of the asymptotic set of its set of equicontinuity points.
		\item The generic limit set of an equicontinuous dynamical system is its limit set. 
		\item The generic limit set of a cellular automaton which is almost equicontinuous in two directions of opposite sign is finite; it is the periodic orbit of a monochrome configuration.
		\item The generic limit set of a sensitive system is infinite.
	\end{itemize}
	A summary of these results, for non-nilpotent CA, is represented in Figure~\ref{fig:result}.
	
	Among the interesting questions that the directional classification brings, one can wonder whether, fixing one CA and making the directions vary, we obtain only finitely many generic limit sets, or whether they should intersect, at least as the orbit of a monochrome configuration (which is not clear for the last two classes).
	
	Of course, another natural question is about what happens for two-dimensional CA: in that case almost equicontinuity does not correspond to existence of blocking words, and neither to non-sensitivity, so that everything becomes much more complex.	
	
	\paragraph{Acknowledgements.}
	The authors wish to thank Martin Delacourt and Guillaume Theyssier for fruitful discussions about their results on $\mu$-limit sets and on directional dynamics, and the latter also for his software ACML which helped easily draw our space-time diagrams.
	
	This research has been held while the first author was welcomed in the Institut de Mathématiques de Marseille thanks to a grant {from the Algerian Ministry of High Education and Scientific Research. }
	
	We also thank the reviewer for a careful reading.
	
	\bibliographystyle{plain}
	\bibliography{references}

\end{document}